\newcommand{\defn}[1]{\textbf{\textit{#1}}}
\newcommand{\R}{\mathbb{R}} 
\newcommand{\C}{\mathbb{C}}
\renewcommand{\H}{\mathbb{H}}
\newcommand{\RP}{\mathbb{RP}}
\renewcommand{\P}{\mathcal{P}}
\newcommand{\cQ}{\mathcal{Q}}
\renewcommand{\O}{\mathcal{O}}
\newcommand{\g}{\mathfrak{g}}
\newcommand{\h}{\mathfrak{h}}
\renewcommand{\k}{\mathfrak{k}}
\newcommand{\m}{\mathfrak{m}}
\newcommand{\s}{\subseteq}
\newcommand{\ip}[1]{\langle #1\rangle}
\newcommand{\too}{\longrightarrow}
\newcommand{\mto}{\mapsto}
\newcommand{\mtoo}{\longmapsto}
\newcommand{\la}{\lambda}
\DeclareMathOperator{\Sp}{Sp}
\DeclareMathOperator{\Lie}{Lie}
\DeclareMathOperator{\Ad}{Ad}
\let\ker\relax\DeclareMathOperator{\ker}{ker}
\DeclareMathOperator{\Spec}{Spec}
\newcommand{\ps}{^{\operatorname{ps}}}
\newcommand{\ass}[1]{^{#1{\operatorname{-ss}}}}
\newcommand{\aps}[1]{^{#1{\operatorname{-ps}}}}
\DeclareFontFamily{U}{MnSymbolC}{}
\DeclareSymbolFont{MnSyC}{U}{MnSymbolC}{m}{n}
\DeclareFontShape{U}{MnSymbolC}{m}{n}{
    <-6>  MnSymbolC5
   <6-7>  MnSymbolC6
   <7-8>  MnSymbolC7
   <8-9>  MnSymbolC8
   <9-10> MnSymbolC9
  <10-12> MnSymbolC10
  <12->   MnSymbolC12}{}
\DeclareMathSymbol{\intp}{\mathbin}{MnSyC}{'270}
\newcommand{\sll}[1]{\mkern-4mu\mathbin{/\mkern-5mu/}_{\mkern-4mu{#1}}}
\newcommand{\slll}[1]{\mkern-4mu\mathbin{/\mkern-5mu/\mkern-5mu/}_{\mkern-4mu{#1}}}
\theoremstyle{plain}
\newtheorem{theorem}{Theorem}[section]
\newtheorem{lemma}[theorem]{Lemma}
\newtheorem{proposition}[theorem]{Proposition}
\newtheorem{corollary}[theorem]{Corollary}
\theoremstyle{definition}
\newtheorem{definition}[theorem]{Definition}
\newtheorem{remark}[theorem]{Remark}
\newtheorem{example}[theorem]{Example}
\numberwithin{equation}{section}
\title{Stratification of singular hyperk\"ahler
quotients}
\author{Maxence Mayrand}
\thanks{During the preparation of this work, the author was
supported by a Moussouris Scholarship from the University of
Oxford, a PGS-D and a PDF scholarship from the Natural
Sciences and Engineering Research Council of Canada (NSERC),
and a postdoctoral scholarship from the Fonds de Recherche
du Qu\'{e}bec Nature et technologies (FRQNT)}
\address{Maxence Mayrand, Department of
Mathematics, University of Toronto}
\email{mayrand@math.toronto.edu}
\begin{document}

\begin{abstract}
Hyperk\"ahler quotients by non-free actions are typically
highly singular, but are remarkably still partitioned into
smooth hyperk\"ahler manifolds. We show that these
partitions are topological stratifications, in a strong
sense. We also endow the quotients with global Poisson
structures which induce the hyperk\"ahler structures on the
strata. Finally, we give a local model which shows that
these quotients are locally isomorphic to linear
complex-symplectic reductions in the GIT sense. These
results can be thought of as the hyperk\"ahler analogues of
Sjamaar--Lerman's theorems for symplectic reduction. They
are based on a local normal form for the underlying
complex-Hamiltonian manifold, which may be of independent
interest.
\end{abstract}


\maketitle

\section{Introduction}

\subsection{Overview}

Let $M$ be a hyperk\"ahler manifold and $K$ a compact Lie
group acting on $M$ by preserving the hyperk\"ahler
structure and with hyperk\"ahler moment map $\mu: M \to \k^*
\otimes \R^3$. If $K$ acts freely, then the hyperk\"ahler
quotient
\[
M \slll{} K \coloneqq \mu^{-1}(0) / K
\]
is a smooth manifold endowed with a canonical hyperk\"ahler
structure \cite{hit87}.

If the action is not necessarily free, the quotient
$\mu^{-1}(0) / K$ is typically highly singular, but is
remarkably still a union of smooth hyperk\"ahler manifolds.
This was first observed by Nakajima \cite{nak94} for quiver
varieties and, in general, by Dancer--Swann \cite{dan97}.
It is an adaptation of the work of Sjamaar--Lerman
\cite{sja91} on symplectic reductions by non-free actions,
which yields to stratified symplectic spaces.

However, not all results of Sjamaar--Lerman have been
adapted to the hyperk\"ahler setting. Stratified symplectic
spaces have much more structure than a union of symplectic
manifolds: the symplectic manifolds ``fit together nicely''
in many ways, and there is a local model generalizing the
Darboux theorem. The goal of this paper is to prove
analogues of these results for hyperk\"ahler quotients.

First, one of the main results of \cite{sja91} is that the
partition of a singular symplectic reduction into symplectic
manifolds is a topological stratification in a strong sense
(see Definition \ref{w2ev8mnf}). We will show that the same
holds for the partition of a singular hyperk\"ahler quotient
into hyperk\"ahler manifolds.

Second, Sjamaar--Lerman \cite{sja91} showed that a
singular symplectic reduction has a certain Poisson
structure which induces the symplectic structures on the
strata. Similarly, we will endow each singular hyperk\"ahler
quotient with a global structure that induces the
hyperk\"ahler structures $(g_S, \mathsf{I}_S, \mathsf{J}_S,
\mathsf{K}_S)$ on the strata $S \s M \slll{} K$. More
precisely, for each choice of frame of complex structures
$(\mathsf{I}, \mathsf{J}, \mathsf{K})$ on $M$, there is a
complex-analytic structure $\O_{\mathsf{I}}$ on $M \slll{}
K$ together with a holomorphic Poisson bracket $\{\cdot,
\cdot\} : \O_{\mathsf{I}} \times \O_{\mathsf{I}} \to
\O_{\mathsf{I}}$ such that the inclusion $S \hookrightarrow
M \slll{} K$ of each stratum is holomorphic Poisson with
respect to $\mathsf{I}_S$ and the complex-symplectic
structure $\omega_{\mathsf{J}_S} + i\omega_{\mathsf{K}_S}$
(where $\omega_{\mathsf{I}_S}, \omega_{\mathsf{J}_S},
\omega_{\mathsf{K}_S}$ are the K\"ahler forms of $S$).
Moreover, $M \slll{} K$ has the structure of a real
stratified symplectic space (in the sense of \cite{sja91})
compatible with the first K\"ahler forms
$\omega_{\mathsf{I}_S}$. That is, there is a subalgebra
$C^\infty(M \slll{} K)$ of the algebra of continuous
functions, together with a Poisson bracket, such that the
inclusions $S \hookrightarrow M \slll{} K$ are smooth and
Poisson with respect to $\omega_{\mathsf{I}_S}$.  These two
global structures on the singular space $M \slll{} K$
determine the K\"ahler forms $\omega_{\mathsf{I}_S},
\omega_{\mathsf{J}_S}, \omega_{\mathsf{K}_S}$ and hence the
whole hyperk\"ahler structures.

Third, we extend another important result of \cite{sja91},
which is that singular symplectic reductions are locally
isomorphic to linear symplectic reductions, i.e.\ reductions
of symplectic vector spaces by linear actions. This can be
viewed as a generalization of the Darboux theorem. We show
that, similarly, singular hyperk\"ahler quotients are
locally biholomorphic with respect to $\O_{\mathsf{I}}$ to
linear hyperk\"ahler quotients, in a way that is compatible
with the complex-symplectic structures
$\omega_{\mathsf{J}_S} + i\omega_{\mathsf{K}_S}$ on the
strata. These isomorphisms are not necessarily compatible
with the first K\"ahler forms $\omega_{\mathsf{I}_S}$, since
there is no Darboux theorem for hyperk\"ahler manifolds
describing all three K\"ahler forms simultaneously.

To get these results, we prove a local normal form for the
complex-Hamiltonian structure of $M$, analogous to
Guillemin--Sternberg's local normal form for a real moment
map \cite{gui82}. This result may be of independent
interest.

\subsection{Statements of results}\label{zgrptvtu}

We now give precise statements of our results; the proofs
will be in the body of the paper.

A \defn{hyperk\"ahler manifold} is a tuple $(M, g,
\mathsf{I}, \mathsf{J}, \mathsf{K})$, where $M$ is a smooth
manifold, $g$ is a Riemannian metric on $M$, and
$\mathsf{I}, \mathsf{J}, \mathsf{K}$ are three complex
structures which are K\"ahler with respect to $g$ and
satisfy $\mathsf{I}\mathsf{J}\mathsf{K} = -1$. The
corresponding K\"ahler forms will be denoted
$\omega_{\mathsf{I}}, \omega_{\mathsf{J}},
\omega_{\mathsf{K}}$. We say that a \defn{tri-Hamiltonian
hyperk\"ahler manifold} is a triple $(M, K, \mu)$, where $M$
is a hyperk\"ahler manifold, $K$ is a \emph{compact} Lie
group acting on $M$ by preserving the hyperk\"ahler
structure, and $\mu : M \to \k^*\otimes\R^3$ is a
hyperk\"ahler moment map, where $\k = \Lie(K)$.

We will also assume that the $K$-action extends to a
holomorphic $K_\C$-action with respect to any of the complex
structures (where $K_\C$ is the complexification of $K$). In
that case, we say that $(M, K, \mu)$ is \defn{integrable}
(or \defn{$\boldsymbol{I}$-integrable} if we need to specify
the complex structure). This is a natural assumption in the
context of K\"ahler or hyperk\"ahler quotients and holds in
most examples that one encounters (cf.\ Sjamaar
\cite{sja95}, Heinzner--Loose \cite{hei94}, or Kaledin
\cite{kal95}). The terminology comes from the fact that it
is equivalent to the completeness of the vector fields
$\mathsf{I}x^\#$, for $x \in \k = \Lie(K)$. For example, it
holds if $M$ is a complex affine variety and the action is
real algebraic (see e.g.\ \cite[p.\ 226]{hei00}).


Let $(M, K, \mu)$ be a tri-Hamiltonian hyperk\"ahler
manifold. Then, the quotient $M\slll{}K \coloneqq
\mu^{-1}(0) / K$ has a natural \defn{orbit-type partition},
whose pieces are the connected components of the subspaces
$\mu^{-1}(0)_{(H)}/K$ for all subgroups $H\s K$, where
$\mu^{-1}(0)_{(H)}$ is the set of points with stabilizer
conjugate to $H$ in $K$. These pieces are, in fact, smooth
hyperk\"ahler manifolds:

\begin{theorem}[{Dancer--Swann
\cite{dan97}}]
\label{a0rzh3o6}
Let $(M, K, \mu)$ be a tri-Hamiltonian hyperk\"ahler
manifold, let $\pi:\mu^{-1}(0)\to M\slll{}K$ be the quotient
map, and let $S\s M\slll{}K$ be an orbit-type piece. Then:

\begin{enumerate}
\item[\textup{(i)}]
$S$ is a topological manifold,
$\pi^{-1}(S)$ is a smooth submanifold of $M$, and there is a
unique smooth structure on $S$ such that $\pi^{-1}(S)\to S$
is a smooth submersion. 

\item[\textup{(ii)}]
There is a unique hyperk\"ahler structure $(g_S,
\mathsf{I}_S, \mathsf{J}_S, \mathsf{K}_S)$ on $S$ such that
the pullbacks of the K\"ahler forms $\omega_{\mathsf{I}_S},
\omega_{\mathsf{J}_S}, \omega_{\mathsf{K}_S}$ to
$\pi^{-1}(S)$ are the restrictions of the K\"ahler forms
$\omega_{\mathsf{I}}, \omega_{\mathsf{J}},
\omega_{\mathsf{K}}$ of $M$.
\end{enumerate}

\end{theorem}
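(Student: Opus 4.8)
The plan is to reduce everything to the symplectic case that has already been worked out by Sjamaar--Lerman and to combine the three symplectic structures afterwards. The key observation is that the zero level set $\mu^{-1}(0)=\mu_{\mathsf{I}}^{-1}(0)\cap\mu_{\mathsf{J}}^{-1}(0)\cap\mu_{\mathsf{K}}^{-1}(0)$ is simultaneously the zero level set for each of the three moment maps, and the orbit type partition of $M\slll{\mu}K$ is induced by the orbit type partition of $\mu^{-1}(0)$ under the $K$-action. I would begin by recalling the relevant pieces of Sjamaar--Lerman's theorem for an ordinary symplectic reduction: for a Hamiltonian $K$-manifold, the preimage $\pi^{-1}(S)$ of an orbit-type piece is a smooth $K$-invariant submanifold of the level set, the quotient $S$ carries a unique smooth manifold structure making the projection a submersion, and the reduced symplectic form on $S$ is characterized by the pullback condition. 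These statements are topological and differential-geometric and depend only on the underlying symplectic data together with the group action.

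The first step is therefore to establish the smooth structure on $S$ and the submanifold property of $\pi^{-1}(S)$. Since these assertions involve only the $K$-action and none of the symplectic forms, they follow directly from the orbit-type analysis: $\pi^{-1}(S)$ is a connected component of $\mu^{-1}(0)_{(H)}$, which is a smooth submanifold of $M$ because $\mu^{-1}(0)$ is preserved by $K$ and the orbit-type pieces of any proper smooth $K$-manifold are smooth submanifolds. The quotient $\pi^{-1}(S)/K$ is then a smooth manifold because $K$ acts on $\pi^{-1}(S)$ with constant stabilizer type, so the quotient is the base of a locally trivial fibration; the uniqueness of the smooth structure making $\pi$ a submersion is standard. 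The point here is that all of this is purely a statement about the $K$-action on the manifold $\mu^{-1}(0)$ and does not yet involve any hyperk\"ahler geometry.

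The second step is to produce the three K\"ahler forms on $S$. For each of $\mathsf{I},\mathsf{J},\mathsf{K}$ separately, the triple $(M,\omega_{\mathsf{I}})$ (resp.\ $\omega_{\mathsf{J}},\omega_{\mathsf{K}}$) together with the $K$-action and the moment map $\mu_{\mathsf{I}}$ forms an ordinary Hamiltonian system, and Sjamaar--Lerman's construction applies. However, one must be careful: the symplectic reduction of $(M,\omega_{\mathsf{I}})$ is taken at the common zero level $\mu^{-1}(0)$, not merely $\mu_{\mathsf{I}}^{-1}(0)$. The resolution is that $\pi^{-1}(S)$ is a submanifold of $\mu^{-1}(0)\s\mu_{\mathsf{I}}^{-1}(0)$ on which $\omega_{\mathsf{I}}$ restricts to a $K$-basic form (its kernel is exactly the tangent space to the $K$-orbits, and it is $K$-invariant), so it descends to a well-defined closed two-form $\omega_{\mathsf{I}_S}$ on $S$ satisfying the required pullback identity. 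One verifies $K$-basicness by a tangent-space computation using the moment map condition $d\mu_{\mathsf{I}}=\iota_{\xi_M}\omega_{\mathsf{I}}$ together with the fact that $T_p\pi^{-1}(S)$ sits inside $T_p\mu_{\mathsf{I}}^{-1}(0)$; this is exactly the mechanism in the standard Marsden--Weinstein argument adapted to the singular stratum. The same argument applied to each complex structure gives $\omega_{\mathsf{J}_S}$ and $\omega_{\mathsf{K}_S}$, and uniqueness of each descended form is immediate from surjectivity of $d\pi$.

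The third and final step, and the one I expect to require the most care, is to assemble these three reduced forms into a genuine hyperk\"ahler structure on $S$, i.e.\ to produce a metric $g_S$ and complex structures $\mathsf{I}_S,\mathsf{J}_S,\mathsf{K}_S$ compatible in the quaternionic sense. The difficulty is that a hyperk\"ahler structure is not merely three independent symplectic forms but a rigid package satisfying the quaternion relations, and one must check that the reduction is compatible with the complex structures, not only the symplectic forms. The cleanest route is to identify $S$ with a symplectic reduction of a \emph{complex} symplectic manifold: fixing $\mathsf{I}$, the combination $\omega_{\mathsf{J}}+i\omega_{\mathsf{K}}$ is a holomorphic symplectic form on the complex manifold $(M,\mathsf{I})$, and $\mu_{\C}\coloneqq\mu_{\mathsf{J}}+i\mu_{\mathsf{K}}$ is a holomorphic moment map; the stratum $\pi^{-1}(S)$ carries an induced complex structure $\mathsf{I}_S$ coming from the fact that it is a complex submanifold-up-to-the-$K^{\C}$-direction of an $\mathsf{I}$-complex level set. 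Concretely, I would show that the horizontal distribution---the $g$-orthogonal complement of the $K$-orbit directions inside $T\pi^{-1}(S)$---is preserved by each of $\mathsf{I},\mathsf{J},\mathsf{K}$, so that these operators descend to endomorphisms $\mathsf{I}_S,\mathsf{J}_S,\mathsf{K}_S$ of $TS$ satisfying $\mathsf{I}_S\mathsf{J}_S=\mathsf{K}_S$, and that the descended metric $g_S$ is K\"ahler with respect to each. This horizontal-distribution argument is precisely the hyperk\"ahler quotient calculation of Hitchin--Karlhede--Lindstr\"om--Ro\v{c}ek carried out on the stratum $\pi^{-1}(S)$ rather than on a free quotient; the essential input is that $\mathsf{I},\mathsf{J},\mathsf{K}$ preserve the orthogonal complement of the orbit directions, which in turn follows from the simultaneous moment map conditions and the fact that the three level sets intersect in $\mu^{-1}(0)$. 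The compatibility of the two descriptions---that the reduced forms $\omega_{\mathsf{I}_S},\omega_{\mathsf{J}_S},\omega_{\mathsf{K}_S}$ from Step two are indeed the K\"ahler forms of the descended metric and complex structures---then follows from the corresponding identities upstairs on $\pi^{-1}(S)$, and uniqueness of the whole package follows from the uniqueness of each form together with the nondegeneracy that determines $g_S$ and the $\mathsf{I}_S,\mathsf{J}_S,\mathsf{K}_S$ from the $\omega$'s.
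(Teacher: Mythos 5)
Your outline has the right overall shape---reduce to Sjamaar--Lerman for each symplectic form and to the HKLR computation for the quaternionic package---but Step 1 contains a genuine gap that propagates through everything else. You assert that $\mu^{-1}(0)_{(H)}$ is a smooth submanifold of $M$ ``because the orbit-type pieces of any proper smooth $K$-manifold are smooth submanifolds,'' and that this part ``does not yet involve any hyperk\"ahler geometry.'' But $\mu^{-1}(0)$ is not a smooth manifold (that is the whole point of the singular theory), so orbit-type theory applies only to $M$ itself: it gives smoothness of $M_{(H)}$, not of $\mu^{-1}(0)_{(H)}=\mu^{-1}(0)\cap M_{(H)}$, which is the intersection of a submanifold with a singular set. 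The smoothness of that intersection is precisely the nontrivial content of the theorem, and it genuinely uses the moment map geometry. The mechanism (used in the paper, following Dancer--Swann and Sjamaar--Lerman) is to pass to the set $M_H$ of points whose stabilizer is \emph{exactly} $H$: this is a smooth, in fact hyperk\"ahler, submanifold of $M$; the induced action of $L=N_K(H)/H$ on (the relevant components of) $M_H$ is free; $\mu$ restricts there to a hyperk\"ahler moment map $\mu_H$ for the $L$-action; and freeness forces $d\mu_H$ to be surjective along its zero set (an argument using all three symplectic forms and the isotropy of the orbit directions at points of the zero level). Hence $\mu^{-1}(0)\cap M_H=\mu_H^{-1}(0)$ is smooth, and then $\mu^{-1}(0)_{(H)}=K\cdot\mu_H^{-1}(0)$ is smooth as well.

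The same omission undermines Steps 2 and 3. In Step 2 you claim that the kernel of $\omega_{\mathsf{I}}$ restricted to $\pi^{-1}(S)$ is exactly the orbit directions ``by the mechanism in the standard Marsden--Weinstein argument''; but that mechanism rests on the equality $T_p\mu^{-1}(0)=\ker(d\mu_{\mathsf{I}})_p$, valid when $0$ is a regular value, and here $T_p\pi^{-1}(S)$ is in general a \emph{proper} subspace of $\ker d\mu_p$. So neither your description of the kernel nor, in Step 3, the $\mathsf{I},\mathsf{J},\mathsf{K}$-invariance of your horizontal distribution can be verified without first identifying $T_p\pi^{-1}(S)$; that identification, $T_p\pi^{-1}(S)=T_p\mu_H^{-1}(0)+T_p(K\cdot p)$ at points of $\mu_H^{-1}(0)$, again comes only from the $M_H$-analysis. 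The paper sidesteps these pointwise verifications entirely: it obtains the hyperk\"ahler structure on $S$ by applying the \emph{free} HKLR quotient theorem to $M_H\slll{\mu_H}L$, transports it through the homeomorphism $M_H\slll{\mu_H}L\cong\mu^{-1}(0)_{(H)}/K$, and then proves the pullback characterization (hence uniqueness) via the decomposition $T_p\pi^{-1}(S)=T_p\mu_H^{-1}(0)+\ker d\pi_p$ together with the fact that the three K\"ahler forms determine the whole hyperk\"ahler structure (e.g.\ $\mathsf{I}=\omega_{\mathsf{J}}^{-1}\omega_{\mathsf{K}}$). In short: introduce $M_H$ and the free $L$-action. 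Without them, Step 1 is unjustified and Steps 2--3 cannot be completed as written.
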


Our formulation of this theorem is slightly stronger than
the one in the cited paper since we have added a
\emph{uniqueness} part in (ii), characterizing the
hyperk\"ahler structures. We will need this stronger
version, so, for completeness, we provide a full proof of
Theorem \ref{a0rzh3o6} in \S\ref{wtcqem80}.

The question of whether this partition is a topological
stratification (Definition \ref{w2ev8mnf}) was left open in
Dancer--Swann's work; even the frontier condition was not
known. The issue is that the arguments used by
Sjamaar--Lerman \cite{sja91} in the symplectic case is based
on the local normal form for the moment map
\cite{gui82,mar85}, which has no hyperk\"ahler equivalent.
Indeed, the local normal form for the moment map implies the
Darboux theorem, so a hyperk\"ahler equivalent would give a
local model describing all three symplectic forms
simultaneously and hence they could not carry any local
information. But the three symplectic forms collectively
determine the Riemannian metric, which does carry local
information: the curvature.  Nevertheless:

\begin{theorem}\label{4ztu3wpj}
Let $(M,K,\mu)$ be an integrable tri-Hamiltonian
hyperk\"ahler manifold. Then, the orbit-type partition of
$M\slll{}K$ is a topological stratification.
\end{theorem}

In fact, we will show that this partition is a Whitney
stratification (Definition \ref{9fd7wy3s}) with respect to
some complex-analytic structure.

The idea of the proof is to use the close relationship
between hyperk\"ahler geometry and complex-symplectic
geometry. Namely, $M\slll{}K$ is isomorphic to a symplectic
reduction in the category of complex-analytic spaces, and we
can adapt Sjamaar--Lerman's arguments to this setting.

More precisely, let $G\coloneqq K_\C$ and suppose, without
loss of generality, that the action is integrable with
respect to the complex structure $\mathsf{I}$. Note that we
can always arrange this by rotating $(\mathsf{I},
\mathsf{J}, \mathsf{K})$ by an element of $\mathrm{SO}(3)$.
Let $\mu_{\mathsf{I}}, \mu_{\mathsf{J}}, \mu_{\mathsf{K}}$
be the three components of the hyperk\"ahler moment map
$\mu$ and let $\mu_\R \coloneqq \mu_{\mathsf{I}}$ and
$\mu_\C \coloneqq \mu_{\mathsf{J}} + i\mu_{\mathsf{K}}$.
Then, $\mu_\C : M \to \g^*$, where $\g \coloneqq \Lie(G)$,
is a holomorphic moment map for the action of $G$ on $M$
with respect to the $\mathsf{I}$-holomorphic
complex-symplectic form $\omega_\C \coloneqq
\omega_{\mathsf{J}} + i\omega_{\mathsf{K}}$.  Moreover, by
letting 
\begin{align*}
M\ass{\mu_\R}
    &\coloneqq \{p \in M:
    \overline{G\cdot p} \cap \mu_\R^{-1}(0) \ne \emptyset
    \} \\
\mu_\C^{-1}(0)\ass{\mu_\R}
    &\coloneqq \mu_\C^{-1}(0) \cap M\ass{\mu_\R},
\end{align*}
we have $\mu^{-1}(0) \s \mu_\C^{-1}(0)\ass{\mu_\R}$ and, by
a result of Heinzner--Loose \cite{hei94}, this inclusion
descends to a homeomorphism $M\slll{}K\cong
\mu_\C^{-1}(0)\ass{\mu_\R}\sll{}G$, where $\ {\sll{}}\ $ is
a categorical quotient in the category of complex-analytic
spaces (we will review Heinzner--Loose's work in
\S\ref{r7gzqckk}).  Thus, it suffices to get a local normal
form for the complex part $\mu_\C$ of the moment map, and
this is one of the main technical results of this paper.

To state this normal form, let $p\in\mu^{-1}(0)$ and let $ V
\coloneqq (T_p(G\cdot p))^{\omega_\C}/T_p(G\cdot p)$, where
$(\cdot)^{\omega_\C}$ is the complex-symplectic complement.
Then, $V$ is a complex-symplectic vector space on which the
stabilizer $H\coloneqq G_p$ acts linearly. The normal form
says that the complex-Hamiltonian manifold $(M, \mathsf{I},
\omega_\C, G, \mu_\C)$ is completely determined in a
neighbourhood of $p$ by the representation of $H$ on $V$.
More precisely, there is a canonical structure of a
complex-Hamiltonian $G$-manifold on the associated vector
bundle $G \times_H(\h^\circ \times V)$ (see
\S\ref{s0eqcql1}), which gives the local model:

\begin{theorem}\label{vhb1qe4r}
Let $(M,K,\mu)$ be an $\mathsf{I}$-integrable
tri-Hamiltonian hyperk\"ahler manifold. Let $G\coloneqq
K_\C$, $p \in \mu^{-1}(0)$, $H \coloneqq G_p$, and $V
\coloneqq (T_p(G\cdot p))^{\omega_\C}/T_p(G\cdot p)$. Then,
there is a $G$-{\hspace{0pt}}saturated neighbourhood of $p$
in $M\ass{\mu_\R}$ which is isomorphic as a
complex-Hamiltonian $G$-manifold to a $G$-saturated
neighbourhood of $[1,0,0]$ in $G\times_H(\h^\circ\times V)$.
\end{theorem}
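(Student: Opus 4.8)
The plan is to adapt the classical Marle--Guillemin--Sternberg local normal form for Hamiltonian moment maps to the holomorphic complex-symplectic category, the key enabling fact being that $p$ is a zero of the \emph{real} moment map. Indeed, since $p\in\mu^{-1}(0)$ we have $\mu_\R(p)=0$, so by the Kempf--Ness theorem the orbit $G\cdot p$ is closed and the stabilizer $H=G_p$ is a reductive complex group, namely the complexification of the compact group $K_p=K\cap H$. This is precisely what will allow us to invoke the holomorphic slice theorem and to split $H$-representations equivariantly, both of which fail for a general point of $\mu_\C^{-1}(0)$.

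First I would record the infinitesimal picture at $p$. From the defining relation $d\langle\mu_\C,\xi\rangle=\iota_{\xi_M}\omega_\C$ one gets $\ker d_p\mu_\C=(T_p(G\cdot p))^{\omega_\C}$ and $\im d_p\mu_\C=\h^\circ$, and since $\mu_\C(p)=0$ the orbit is $\omega_\C$-isotropic, so $T_p(G\cdot p)\s\ker d_p\mu_\C$. Using reductivity of $H$ to choose $H$-invariant complements, this yields an $H$-equivariant splitting $T_pM\cong(\g/\h)\oplus V\oplus\h^\circ$, where the last factor is identified with $\im d_p\mu_\C$ via $d_p\mu_\C$. After a standard $H$-equivariant linear normalization one checks that $\omega_{\C,p}$ becomes the canonical complex-symplectic form pairing $\g/\h$ with $\h^\circ$ and restricting to the given form on $V$. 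In particular the slice representation $N_p=T_pM/T_p(G\cdot p)$ is isomorphic, as an $H$-module with its induced linear complex-symplectic data, to $V\oplus\h^\circ$, which is exactly the slice of the model $E=G\times_H(\h^\circ\times V)$ at $[1,0,0]$.

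Next, apply the holomorphic slice theorem (for the closed orbit $G\cdot p$ with reductive stabilizer $H$) to both $M$ and $E$: each admits a $G$-saturated neighbourhood of its base point that is $G$-equivariantly biholomorphic to $G\times_H(\text{a neighbourhood of }0\text{ in }V\oplus\h^\circ)$. Composing gives a $G$-equivariant biholomorphism $\Phi_0$ between $G$-saturated neighbourhoods of $[1,0,0]\in E$ and $p\in M$, carrying the semistable loci into one another and hence landing in $M\ass{\mu_\R}$; moreover, by the normalization above we may arrange that $\Phi_0$ matches the two complex-symplectic forms along the zero section. At this stage $\Phi_0$ respects the complex structures and the $G$-actions but not yet the forms globally. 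It remains to upgrade $\Phi_0$ to a $\Phi$ with $\Phi^*\omega_\C=\omega_E$; once this is done the moment maps match automatically, since $\Phi^*\mu_\C$ and $\mu_E$ are then both moment maps for the same form and action on the connected model $E$, hence differ by a constant in $(\g^*)^G$, which vanishes because both are $0$ at the base point.

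The main obstacle is therefore a $G$-equivariant, holomorphic version of Moser's method. Both $\omega_E$ and $\omega_1:=\Phi_0^*\omega_\C$ are $G$-invariant holomorphic complex-symplectic forms agreeing along the (isotropic) zero section $G\cdot[1,0,0]$, so their difference is closed and vanishes there. Contracting the vector bundle $G\times_H(\h^\circ\times V)$ onto its zero section by fibrewise scaling---a holomorphic, $G$-equivariant homotopy---and applying the associated homotopy operator produces a holomorphic primitive $\alpha$ of $\omega_1-\omega_E$ vanishing on the zero section; averaging over the compact group $K$ (and using $G=K_\C$, so that $K$-invariance of a holomorphic object forces $G$-invariance by analytic continuation) makes $\alpha$ $G$-invariant. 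Defining $X_t$ by $\iota_{X_t}\omega_t=-\alpha$ with $\omega_t=\omega_E+t(\omega_1-\omega_E)$, which is nondegenerate near the zero section, gives a $G$-invariant holomorphic time-dependent vector field vanishing on the zero section. The one remaining difficulty is that the orbit $G\cdot p$ is \emph{non-compact}, so existence of the Moser flow for $t\in[0,1]$ is not automatic; this is handled by passing to the slice, where $G$-invariant data correspond to $H$-invariant data on a neighbourhood of $0\in V\oplus\h^\circ$---a neighbourhood of a single point, on which the holomorphic flow exists for short time and can be rescaled---and then propagating by $G$-equivariance. The resulting flow $\phi_1$ satisfies $\phi_1^*\omega_1=\omega_E$, and $\Phi:=\Phi_0\circ\phi_1$ is the desired isomorphism of complex-Hamiltonian $G$-manifolds.
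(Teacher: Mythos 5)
Your proposal is correct in outline and follows the same architecture as the paper's proof (Theorem \ref{edlabpxd}): reduce to the model $E=G\times_H(\h^\circ\times V)$ via the linearised Holomorphic Slice Theorem (Theorem \ref{8w30qpuz}), match the two $G$-invariant complex-symplectic forms along the zero section by equivariant linear algebra, remove the remaining discrepancy by an equivariant holomorphic Moser argument (Theorem \ref{bkd2nozc}), and get the moment map identity for free since both sides are moment maps for the same action and form, vanishing at the base point. The one genuinely different ingredient is the linear step: you use the classical equivariant Witt--Darboux normalization $T_pM\cong\g/\h\oplus\h^\circ\oplus V$ (valid because $H$ is reductive, so invariant complements and invariant Lagrangian complements exist via the graph trick), whereas the paper deduces the corresponding statement (Lemma \ref{snimazb2}) from Knop's classification of complex-symplectic representations (Proposition \ref{bumt2spe}). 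Your route is more elementary and closer to Guillemin--Sternberg's original argument, and it suffices for the purpose at hand.

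Two steps are glossed over. First, ``we may arrange that $\Phi_0$ matches the two forms along the zero section'' conceals the only non-formal construction in this part of the proof: the linear correction $\varphi=d(\Phi_0)_q^{-1}\circ(\text{normalization})$ of $T_qE$ must be realized by a $G$-equivariant biholomorphism of $E$, and a general $H$-equivariant linear automorphism of $T_qE$ does \emph{not} globalize. What makes it work, and what you should verify, is that $\varphi$ fixes $T_q0_E=\m$ pointwise --- true in your setup because both normalizations restrict to the canonical identification of the orbit tangent space with $\g/\h$ (see Proposition \ref{lopjs6qu} for the model side), as does $d(\Phi_0)_q$ by equivariance. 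Then $\varphi(X,\xi,v)=(X+A(\xi,v),B(\xi,v))$, and $[g,\xi,v]\mapsto[ge^{A(\xi,v)},B(\xi,v)]$ is the required $G$-equivariant automorphism; this is exactly the paper's construction of the map $\psi$. Second, the claim that the Moser flow ``exists for short time and can be rescaled'' is wrong as stated: one cannot rescale time in a time-dependent Moser flow. The correct mechanism, which your own setup already provides, is that the Moser vector field vanishes identically on the zero section (the scaling homotopy operator produces a primitive vanishing there), so all points of the zero section are stationary; the maximal flow domain is open and $G$-invariant, hence contains a $G$-invariant neighbourhood of the zero section on which the time-one map is defined. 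This is precisely how the paper's Theorem \ref{bkd2nozc} handles the non-compactness of $G\cdot p$, with no passage to the slice needed. (Your $K$-averaging of the primitive is also unnecessary, since the homotopy operator commutes with the $G$-action --- $G$ acts by bundle automorphisms --- though the averaging-plus-analytic-continuation argument is valid.) Finally, after the Moser step the neighbourhoods are only $G$-invariant; to finish you should shrink them to $G$-saturated ones of the form $G\times_H(H\cdot B)$ as in Remark \ref{c9lhmxfq}.
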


Here, a \defn{$\boldsymbol{G}$-saturated} subset of a
$G$-space $X$ is a subset $A$ such that $\overline{G\cdot
a}\s A$ for all $a\in A$. See Losev \cite{los06} for a
similar result in the algebraic setting. 

This local form enables us to study the structure of the
quotient:

\begin{theorem}\label{71m3l02v}
Let $(M, K,\mu)$ be an $\mathsf{I}$-integrable
tri-Hamiltonian hyperk\"ahler manifold and let $G\coloneqq
K_\C$. For each orbit-type strata $S \s M \slll{} K$, let
$(g_S, \mathsf{I}_S, \mathsf{J}_S, \mathsf{K}_S)$ be its
hyperk\"ahler structure as in \textup{Theorem
\ref{a0rzh3o6}}. Let $\mu_\R\coloneqq\mu_{\mathsf{I}}$ and
$\mu_\C\coloneqq\mu_{\mathsf{J}}+i\mu_{\mathsf{K}}$.

\begin{itemize}

\item[\textup{(i)}]
\textbf{\emph{Complex-analytic structure.}}
The inclusion $\mu^{-1}(0)\s\mu_\C^{-1}(0)\ass{\mu_\R}$
descends to a homeomorphism $M\slll{}K \cong
\mu_\C^{-1}(0)\ass{\mu_\R} \sll{} G$ and hence $M \slll{} K$
inherits the structure $\O_{\mathsf{I}}$ of a
complex-analytic space. Moreover, the orbit-type partition
is a complex-analytic Whitney stratification with respect to
$\O_{\mathsf{I}}$ and is compatible with the complex
structures $\mathsf{I}_S$ on the strata.

\item[\textup{(ii)}]
\textbf{\emph{Holomorphic Poisson structure.}}
There is a unique holomorphic Poisson bracket on
$\O_{\mathsf{I}}$ such that the inclusion of each orbit-type
piece $S\hookrightarrow M\slll{}K$ is holomorphic Poisson
with respect to $\omega_{\mathsf{J}_S} +
i\omega_{\mathsf{K}_S}$.

\item[\textup{(iii)}] 
\textbf{\emph{Real Poisson structure.}}
The first K\"ahler forms $\omega_{\mathsf{I}_S}$ are
compatible with a stratified symplectic structure in the
sense of Sjamaar--Lerman, i.e.\ there is a subalgebra
$C^\infty(M \slll{} K)$ of the algebra of real-valued
continuous functions, together with a Poisson bracket, such
that the inclusion of each orbit-type piece $S
\hookrightarrow M \slll{} K$ is smooth and Poisson with
respect to $\omega_{\mathsf{I}_S}$.

\item[\textup{(iv)}]
\textbf{\emph{Local model.}}
Let $q \in M \slll{} K$. Take a point $p \in \mu^{-1}(0)$
above $q$, let $H \coloneqq G_p$, let $V \coloneqq
(T_p(G\cdot p))^{\omega_\C}/T_p(G\cdot p)$, where $\omega_\C
\coloneqq \omega_{\mathsf{J}}+i\omega_{\mathsf{K}}$, and let
$\Phi_V:V\to\h^*$ be the moment map $\Phi_V(v)(x) =
\frac{1}{2}\omega_\C(xv,v)$. Then, $H$ is a complex
reductive group and $q$ has a neighbourhood biholomorphic
with respect to $\O_{\mathsf{I}}$ to a neighbourhood of $0$
in the GIT quotient $\Phi_V^{-1}(0) \sll{} H =
\Spec\C[\Phi^{-1}_V(0)]^H$. Moreover, this biholomorphism
respects the orbit-type stratifications and holomorphic
Poisson brackets on both sides.

\end{itemize}
\end{theorem}

\begin{remark}
(i) and (iii) imply that $M\slll{}K$ is a stratified
K\"ahler space in the sense of Huebschmann \cite[Definition
3.1]{hue11}.
\end{remark}

\begin{remark}
Using Kempf--Ness type theorems, there are many situations
where $M\slll{}K$ is isomorphic to a GIT quotient
$\mu_\C^{-1}(0)\sll{\mathcal{L}}G$ for some linearisation
$\mathcal{L}$, i.e.\ when $\mu_\C^{-1}(0)\ass{\mu_\R}$
coincides with the set of $\mathcal{L}$-semistable points.
In that case, the sheaf $\O_{\mathsf{I}}$ is simply the
underlying complex-analytic structure. For example, this is
the case for hyperk\"ahler quotients of $T^*G$ by closed
subgroups of $K \times K$ \cite{may19}, where $T^*G$ has the
hyperk\"ahler structure found by Kronheimer \cite{kro88}.
See also \cite{may17} for a study of the resulting partition
in some family of examples.
\end{remark}

\begin{remark}
The complex-symplectic GIT quotients $\Phi_V^{-1}(0)\sll{}H$
which appear as local models in this theorem have been well
studied in the literature. For example, it is known  that if
$H$ is abelian then $\Phi_V^{-1}(0)\sll{}H$ is normal
\cite{bul18}. In particular, Theorem \ref{71m3l02v}(iv)
implies that hyperk\"ahler quotients by compact tori are
normal.
\end{remark}

\subsection{Organization of the paper}

In \S\ref{prbnv29v} we give background material on
stratified spaces and reduction, in \S\ref{fzal992o} we
prove Theorem \ref{vhb1qe4r}, and in \S\ref{x09hoi74} we
prove Theorem \ref{71m3l02v}.

\subsection*{Acknowledgements}
This work was done mainly during the author's PhD at the
University of Oxford. I thank Prof.\ Andrew Dancer, my
supervisor, for his guidance.

\section{Preliminaries}\label{prbnv29v}

This section gives background material on stratified spaces,
symplectic reduction, quotients of complex-analytic spaces,
and the links between these notions. We start with a review
of the theory of topologically stratified spaces and explain
the work of Sjamaar--Lerman \cite{sja91} on singular
symplectic reduction. We then discuss links with
complex-analytic geometry, reviewing work of Heinzner--Loose
\cite{hei94} and Sjamaar \cite{sja95}. We also recall
Dancer--Swann's construction \cite{dan97} of the
hyperk\"ahler structures on the orbit-type pieces of a
singular hyperk\"ahler quotient and prove Theorem
\ref{a0rzh3o6}.

\subsection{Stratified spaces}\label{mvwurhby}

Stratified spaces are topological spaces which can be
partitioned into manifolds which ``fit together nicely''.
The underlying object for this theory is thus the following:

\begin{definition}\label{0w8kdfpkuc}
A \defn{partitioned space} is a pair $(X,\P)$ where $X$ is a
topological space and $\P$ is a partition of $X$, i.e.\ a
collection of non-empty disjoint subsets  of $X$ whose union
is $X$. The elements of $\P$ are called the \defn{pieces}.
An \defn{isomorphism} between two partitioned spaces
$(X,\P)$ and $(Y,\cQ)$ is a homeomorphism $f:X\to Y$ such
that for all $S \in \P$ there exists $T \in \cQ$ such that
$f(S) = T$.
\end{definition}

Just like manifolds are topological spaces satisfying
additional conditions, stratified spaces are partitioned
spaces with additional conditions imposed. The first step is
the following notion.

\begin{definition}[{\cite[\S1.1]{gor88}}]\label{pywimqkk}
A \defn{decomposed space} is a partitioned space $(X,\P)$
such that $X$ is second countable and Hausdorff, and the
following conditions hold:

\begin{itemize}

\item[$\bullet$] {\bf Manifold condition.}
Each piece is a topological manifold in the subspace
topology.

\item[$\bullet$] {\bf Local condition.}
$\P$ is locally finite and its pieces are locally closed.

\item[$\bullet$] {\bf Frontier condition.}
For all $S,T\in\P$, if $S\cap\overline{T}\ne\emptyset$ then
$S\s\overline{T}$.

\end{itemize}

\end{definition}

\begin{remark}
If $(X,\P)$ is a decomposed space, then there is a natural
relation on $\P$ given by $S\leq T$ if $S\s\overline{T}$. It
follows from the local closedness of the strata that this
relation is a partial order. Moreover, the frontier
condition is equivalent to
\[
\overline{S} = \bigcup_{T\le S} T,
\quad
\text{for all }S\in\P.
\]
This notion is sometimes incorporated in the definition of
decomposed space, namely we fix a poset $\mathcal{I}$ and
say that an $\mathcal{I}$-decomposed space is a
topologically stratified space $(X,\P)$ with an isomorphism
$\P\cong\mathcal{I}$ of posets.
\end{remark}

Decomposed spaces can be rather pathological: for example,
the topologist's sine curve
\begin{center}
\includegraphics[scale=0.3]{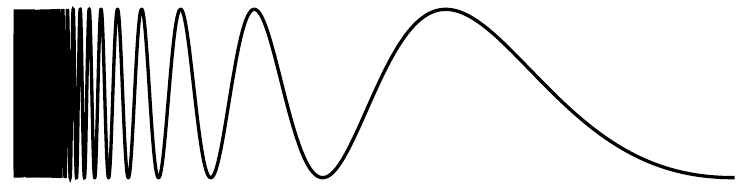}
\end{center}
is a perfectly valid one with two strata. Roughly speaking,
stratified spaces avoid such pathologies by requiring that
every point has a neighbourhood which retracts continuously
onto it. We also impose that this neighbourhood is
compatible with the partition in some sense. To make this
precise, we need a few extra notions.  First, the
\defn{dimension} of a decomposed space $(X,\P)$ is
\[
\dim(X,\P) \coloneqq \sup\{\dim S:S\in\P\}.
\]
Given two partitioned spaces $(X,\P)$ and $(Y,\cQ)$, their
\defn{cartesian product} is the partitioned space $(X\times
Y,\P\times\cQ)$ where $\P\times\cQ = \{S\times
T:S\in\P,T\in\cQ\}$. If $(X,\P)$ and $(Y,\cQ)$ are
decomposed spaces, then so is $(X\times Y,\P\times\cQ)$, and
$\dim(X\times Y,\P\times\cQ) = \dim(X,\P) + \dim(Y,\cQ)$.
Next, the \defn{cone} over a partitioned space $(X,\P)$ is
the partitioned space $(CX,C\P)$ where $CX$ is the open cone
over $X$, i.e.
\[
CX \coloneqq (X\times[0,\infty))/\{(p,0)\sim(q,0),\text{for
all }p,q\in X\}
\]
and $C\P$ is the natural partition of $CX$ given by $C\P
\coloneqq
\{S\times(0,\infty):S\in\P\}\cup\{\text{vertex}\}$. The cone
over a decomposed space $(X,\P)$ is itself a decomposed
space and has dimension $\dim(CX,C\P)=\dim(X,\P)+1$. A
topologically stratified space is defined inductively as a
decomposed space $(X,\P)$ which is locally isomorphic to
$\R^n$ times a cone over a lower-dimensional topologically
stratified space:

\begin{definition}[{\cite{gor88,sja91}}]\label{w2ev8mnf}
A zero-dimensional topologically stratified space is any
countable set of points with the discrete topology and with
any partition. A \defn{topologically stratified space} is a
finite-dimensional decomposed space $(X,\P)$ such that every
point $p\in X$ has a neighbourhood isomorphic as a
partitioned space to $\R^n\times CL$ for some $n\ge 0$ and
some compact topologically stratified space $L$, by a map
sending $p\mto\{0\}\times\{\text{vertex}\}$.
\end{definition}

For example, one-dimensional topologically stratified spaces
are locally modelled on cones over finite sets of points,
i.e.\ they are graphs:
\newcommand{\leglength}{0.5cm}
\newcommand{\dotsize}{0.5pt}
\[
\begin{tikzpicture}
\begin{scope}[shift={(-0.3, 0)}]
	\draw (0, -0.5) -- (0, 0.5);
\end{scope}
\begin{scope}[shift={(0.2, 0)}]
	\foreach\ang in {90}{
		\draw (0,0) -- (\ang:\leglength);
    }
    \node[draw, circle, fill, inner sep=\dotsize] at (0, 0) {};
\end{scope}
\begin{scope}[shift={(0.75, 0)}]
	\foreach\ang in {90, 270}{
		\draw (0,0) -- (\ang:\leglength);
    }
    \node[draw, circle, fill, inner sep=\dotsize] at (0, 0) {};
\end{scope}
\begin{scope}[shift={(1.5, 0)}]
	\foreach\ang in {90, 210, 330}{
		\draw (0,0) -- (\ang:\leglength);
    }
    \node[draw, circle, fill, inner sep=\dotsize] at (0, 0) {};
\end{scope}
\begin{scope}[shift={(2.5, 0)}]
	\foreach\ang in {0, 90, 180, 270}{
		\draw (0,0) -- (\ang:\leglength);
    }
    \node[draw, circle, fill, inner sep=\dotsize] at (0, 0) {};
    \node at (-0.35, -0.7) {one-dimensional local models};
\end{scope}
\begin{scope}[shift={(3.6, 0)}]
	\foreach\ang in {18, 90, 162, 234, 306}{
		\draw (0,0) -- (\ang:\leglength);
    }
    \node[draw, circle, fill, inner sep=\dotsize] at (0, 0) {};
\end{scope}
\begin{scope}[shift={(4.5, 0)}]
\node at (0, 0) {$\cdots$};
\end{scope}
\begin{scope}[shift={(8.5, 0)}, scale=0.4, rotate=-20]
\node[draw, circle, fill, inner sep=\dotsize] (1) at (2.442, -1.227) {};
\node[draw, circle, fill, inner sep=\dotsize] (2) at (3.662, -0.806) {};
\node[draw, circle, fill, inner sep=\dotsize] (3) at (-1.277, 1.649) {};
\node[draw, circle, fill, inner sep=\dotsize] (4) at (-2.965, -1.879) {};
\node[draw, circle, fill, inner sep=\dotsize] (5) at (-4.391, -2.505) {};
\node[draw, circle, fill, inner sep=\dotsize] (6) at (5.292, 1.340) {};
\node[draw, circle, fill, inner sep=\dotsize] (7) at (1.210, -0.583) {};
\node[draw, circle, fill, inner sep=\dotsize] (8) at (0.219, 0.657) {};
\node[draw, circle, fill, inner sep=\dotsize] (9) at (0.517, 1.805) {};
\node[draw, circle, fill, inner sep=\dotsize] (10) at (-5.460, -3.072) {};
\node[draw, circle, fill, inner sep=\dotsize] (11) at (-1.621, 2.953) {};
\node[draw, circle, fill, inner sep=\dotsize] (12) at (2.913, 2.408) {};
\node[draw, circle, fill, inner sep=\dotsize] (13) at (4.300, 0.427) {};
\node[draw, circle, fill, inner sep=\dotsize] (14) at (-2.626, -3.052) {};
\node[draw, circle, fill, inner sep=\dotsize] (15) at (-2.433, 0.918) {};
\node[draw, circle, fill, inner sep=\dotsize] (16) at (5.534, -0.211) {};
\node[draw, circle, fill, inner sep=\dotsize] (17) at (-1.730, -0.319) {};
\node[draw, circle, fill, inner sep=\dotsize] (18) at (2.504, 1.186) {};
\node[draw, circle, fill, inner sep=\dotsize] (19) at (-3.661, 1.385) {};
\node (20) at (0.5, -3.5) {a one-dimensional topologically stratified space};
\draw (1) -- (2);
\draw (5) -- (4);
\draw (17) -- (8);
\draw (13) -- (2);
\draw (8) -- (3);
\draw (10) -- (5);
\draw (14) -- (4);
\draw (15) -- (3);
\draw (4) -- (17);
\draw (3) -- (11);
\draw (13) -- (16);
\draw (8) -- (9);
\draw (12) -- (18);
\draw (13) -- (18);
\draw (8) -- (18);
\draw (6) -- (13);
\draw (1) -- (7);
\draw (15) -- (19);
\draw (7) -- (8);
\draw (15) -- (17);
\end{scope}
\end{tikzpicture}
\]
Then, two-dimensional topologically stratified spaces are
locally modelled on cones over graphs, etc. Also, manifolds
with corners are special cases.


A typical way of proving that a decomposed space $(X,\P)$ is
a topologically stratified space is by the Whitney conditions
\cite{whi65}.

\begin{definition}\label{18w9hd9y}
Let $S$ and $T$ be two disjoint smooth submanifolds of
$\R^n$. We say that $S$ is \defn{regular} over $T$ if the
following two conditions hold:

\begin{itemize}

\item[$\bullet$] \textbf{Whitney Condition A.}
If $x_i\in S$ is a sequence converging to $y\in T$ and the
sequence of subspaces $T_{x_i}S\s\R^n$ converges (in the
Grassmannian) to $V\s\R^n$, then $T_yT\s V$.

\item[$\bullet$] \textbf{Whitney Condition B.}
If $x_i\in S$ and $y_i\in T$ are two sequences converging to
$y\in T$ in such a way that that the sequence of lines
$\R(x_i-y_i)\s\R^n$ converges to $l\in\RP^{n-1}$ and the
subspaces $T_{x_i}S$ to $V\s\R^n$, then $l\s V$.

\end{itemize}

A \defn{Whitney stratification} of a subset $X$ of $\R^n$ is
a decomposition $\P$ of $X$ into smooth submanifolds of
$\R^n$ such that $S$ is regular over $T$ for all $S,T\in\P$.
\end{definition}

We have (see e.g.\  Goresky--MacPherson \cite[Ch.\ 1,
\S1.4]{gor88} or Mather \cite{mat12}):

\begin{proposition}\label{91vu0grw}
Whitney stratifications are topological stratifications in
the sense of \emph{Definition \ref{w2ev8mnf}}.\hfill\qed
\end{proposition}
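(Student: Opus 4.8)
The plan is to prove this by induction on the dimension $d=\dim(X,\P)$, reducing the statement to the local topological triviality of Whitney stratified sets provided by Thom--Mather theory. The base case $d=0$ is immediate, since a $0$-dimensional decomposed space is a discrete countable set, which is a stratified space by fiat. For the inductive step I would fix a point $p$ lying in a stratum $S\in\P$ of dimension $m$ and aim to produce a neighbourhood of $p$ in $X$ isomorphic as a partitioned space to $\R^m\times CL$ for a compact stratified space $L$ of dimension $<d$, sending $p$ to $\{0\}\times\{\text{vertex}\}$; such an $L$ then qualifies as a link by the inductive hypothesis.

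The first step is to build the analytic scaffolding. I would equip $(X,\P)$ with a system of control data in the sense of Mather \cite{mat12}: for each stratum a tubular neighbourhood $T_S$ with a retraction $\pi_S:T_S\to S$ and a tube function $\rho_S:T_S\to[0,\infty)$ vanishing exactly on $S$, chosen so that the projections and tube functions satisfy the usual commutation relations $\pi_R\pi_S=\pi_R$ and $\rho_R\pi_S=\rho_R$ on overlaps. Whitney Condition A is what allows these tubes to be chosen compatibly across incident strata. Next I would take a slice $N$ transverse to $S$ at $p$ with $\dim N=n-m$ and $N\cap S=\{p\}$ locally; since transversality preserves the Whitney conditions, $X\cap N$ is again Whitney stratified, now with $\{p\}$ as a $0$-dimensional stratum and total dimension $d-m$.

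The heart of the argument is Thom's first isotopy lemma, which I would invoke twice. Applied to the tube projection $\pi_S$ restricted to $X\cap T_S$ --- a proper stratified submersion onto $S$ --- it yields a stratum-preserving homeomorphism identifying a neighbourhood of $p$ in $X$ with $\R^m\times(X\cap N)$ near $p$, giving the desired product structure along $S$. Applied to the distance function $r=|\,\cdot-p\,|$ on $(X\cap N)\cap r^{-1}((0,\e])$ --- again a proper stratified submersion onto $(0,\e]$ --- it trivialises the slice as $L\times(0,\e]$, where $L\coloneqq(X\cap N)\cap r^{-1}(\e)$, and this trivialisation cones off to a homeomorphism $(X\cap N)\cap r^{-1}([0,\e])\cong CL$. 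Here $L$ is a compact Whitney stratified set of dimension $(d-m)-1<d$, so by the inductive hypothesis it is a stratified space; combining the two identifications gives the local model $\R^m\times CL$ and completes the induction.

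The main obstacle, and the only genuinely nontrivial input, is the construction underlying the isotopy lemma: lifting a vector field tangent to the base to a vector field on the total space that is simultaneously continuous across all strata, tangent to each stratum, and controlled with respect to the tube data, so that its flow is continuous and stratum-preserving. Continuity of the lifted field at limit points lying on lower strata is precisely what Whitney Condition B guarantees, and integrating such controlled fields is what produces the topological (rather than merely smooth) trivialisations. I would therefore cite Mather \cite{mat12} or Goresky--MacPherson \cite[Ch.~1]{gor88} for this step, since reproducing the full controlled-vector-field argument is the substantial part of Thom--Mather theory rather than something to be re-derived here.
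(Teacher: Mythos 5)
Your proposal is correct and takes essentially the same route as the paper: the paper offers no argument of its own, simply citing Goresky--MacPherson \cite[Ch.~1, \S1.4]{gor88} and Mather \cite{mat12}, and your sketch is precisely the standard Thom--Mather argument from those references (induction on dimension, control data, a transverse slice, and two applications of the first isotopy lemma to produce the local model $\R^m\times CL$), with the genuinely hard step --- lifting controlled vector fields and integrating them to stratum-preserving trivialisations --- deferred to the same sources. Since the paper treats this as a known result, your unpacking is a faithful expansion of its citation rather than a different proof.
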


Although Whitney stratifications are initially defined in
$\R^n$, the definition is purely local and is invariant
under diffeomorphisms \cite[\S2]{mat12}. In particular, it
makes sense for complex-analytic spaces:

\begin{definition}\label{9fd7wy3s}
A \defn{complex-analytic Whitney stratified space} is a
complex-analytic space $(X,\O_X)$ together with a
decomposition $\P$ of $X$ into complex submanifolds
satisfying Whitney conditions A and B.
\end{definition}

\subsection{Smooth manifold quotients}\label{kdjccddu}

Let $K$ be a compact Lie group acting smoothly on a smooth
manifold $M$. Then, the quotient $M/K$ is a topologically
stratified space with respect to a natural partition by
orbit-types. To define this partition, for each subgroup
$H\s K$, let $(H)$ be the conjugacy class of $H$ in $K$. We
say that $p\in M$ has \defn{orbit-type} $(H)$ if its
stabilizer subgroup $K_p$ is in $(H)$. Denote the set of
points of orbit-type $(H)$ by
\[
M_{(H)}\coloneqq \{p\in M:K_p\in(H)\}.
\]
The \defn{orbit-type partition} is the partition whose
pieces are the connected components of the sets $M_{(H)}/K$
for $H\s K$. This is a topological stratification as a
consequence of the slice theorem for proper group actions
(see e.g.\ \cite[Theorem 2.7.4]{dui12}).


\subsection{Stratified symplectic spaces}\label{xnx9ie9i}

A \defn{Hamiltonian manifold} is a triple $(M,K,\mu)$, where
$M$ is a symplectic manifold, $K$ a compact Lie group acting
on $M$ by symplectomorphisms, and $\mu:M\to\k^*$ a
$K$-equivariant moment map.  Sjamaar--Lerman \cite{sja91}
generalized the Marsden--Weinstein theorem \cite{mar74} by
showing that the orbit-type partition of
$M\sll{\mu}K\coloneqq \mu^{-1}(0)/K$ is a topological
stratification (even though $\mu^{-1}(0)$ is not a smooth
manifold) and that each piece has a canonical symplectic
structure.

Moreover, these symplectic structures are compatible with a
Poisson bracket on an appropriate substitute for the algebra
of smooth functions. More precisely, let
$C^\infty(M\sll{\mu}K)$ be $\R$-algebra of continuous
functions on $M\sll{\mu}K$ which descend from smooth
$K$-invariant functions on $M$. Then, there is a unique
Poisson bracket on $C^\infty(M\sll{\mu}K)$ such that the
inclusion of each symplectic stratum $S \hookrightarrow
M\sll{\mu}K$ is a smooth Poisson map. 

Hence, the main result of \cite{sja91} is that singular
symplectic reductions are examples of the following notion.

\begin{definition}[Sjamaar--Lerman
\cite{sja91}]\label{47d8nznm}
A \defn{stratified symplectic space} is a topologically
stratified space $(X,\P)$ with a symplectic structure on
each stratum, a subalgebra $C^\infty(X)$ of the $\R$-algebra
of continuous functions on $X$, and a Poisson bracket on
$C^\infty(X)$ such that for each stratum $S\in\P$ the
inclusion $S\hookrightarrow X$ is a Poisson map, i.e.\ for
all $f,g\in C^\infty(X)$ the restrictions $f|_S,g|_S$ are
smooth and $\{f|_S,g|_S\}= \{f, g\}|_S$.
\end{definition}

We recall the construction  of the symplectic forms on the
orbit-type pieces of $M \sll{\mu} K$ \cite[Theorem
3.5]{sja91}, as this will be useful for our discussion on
hyperk\"ahler quotients. For a closed subgroup $H\s K$, let
$M_H$ be the set of points $p\in M$ whose stabilizer is
precisely $H$. Then, the connected components of $M_H$ are
smooth symplectic submanifolds of $M$ (of possibly different
dimensions) and the group $L\coloneqq N_K(H)/H$ (where
$N_K(H)$ is the normalizer of $H$ in $K$) is compact and
acts freely on $M_H$ by preserving the symplectic forms.
Now, $\mathfrak{l}^*\coloneqq\Lie(L)^*$ can be identified
with a subspace of $\k^*$, namely, $\h^\circ\cap(\k^*)^H$,
where $\h^\circ$ is the annihilator of $\h\coloneqq\Lie(H)$
and $(\k^*)^H$ is the set of points fixed by $H$. Moreover,
if $M_H'$ denotes the union of the connected components of
$M_H$ which intersect $\mu^{-1}(0)$, then $\mu$ restricts to
a moment map $\mu_H:M_H'\to\mathfrak{l}^*$ for the action of
$L$ on $M_H'$. Since this action is free, each connected
component of $M_H\sll{\mu_H}L=\mu_H^{-1}(0)/L$ is a smooth
symplectic manifold by the standard Marsden--Weinstein
theorem \cite{mar74}. Then, the inclusion $\mu_H^{-1}(0)\s
\mu^{-1}(0)_{(H)}$ descends to a homeomorphism
$M_H\sll{\mu_H}L\cong\mu^{-1}(0)_{(H)}/K$, and this endows
each connected component of $\mu^{-1}(0)_{(H)}/K$ with a
symplectic structure. Furthermore, the pullback of each
symplectic form to the corresponding connected component of
$\mu^{-1}(0)_{(H)}$ (which is a smooth submanifold of $M$)
is the restriction of the symplectic form of $M$.


%

\subsection{K\"ahler quotients}\label{r7gzqckk}

A \defn{Hamiltonian K\"ahler manifold} is a Hamiltonian
manifold $(M,K,\mu)$ with a $K$-invariant K\"ahler structure
compatible with the symplectic form. If $K$ acts freely,
then $M\sll{\mu}K$ has a K\"ahler structure compatible with
the reduced symplectic form (see e.g.\ \cite[Theorem
3.1]{hit87}). More generally, each symplectic stratum in
Sjamaar--Lerman's stratification is K\"ahler. To see this,
it suffices to note that for each closed subgroup $H\s K$,
the space $M_H$ of points with stabiliser $H$ is now a
complex submanifold of $M$ and hence is K\"ahler.  Thus, the
connected components of $M_H\sll{\mu_H}L$ (where $\mu_H$ and
$L$ are as in \S\ref{xnx9ie9i}) are K\"ahler manifolds, and
the homeomorphism $M_H\sll{\mu_H}L\cong \mu^{-1}(0)_{(H)}/K$
gives the desired K\"ahler structures.

But we can say much more about the holomorphic aspect of
$M\sll{\mu}K$ when the action is integrable, i.e.\ extends
to a holomorphic action of $G\coloneqq K_\C$, as shown by
Sjamaar \cite{sja95} and Heinzner--Loose \cite{hei94}.
Indeed, $M\sll{\mu}K$ is homeomorphic to a categorical
quotient $M\ass{\mu}\sll{}G$ in the category of
complex-analytic spaces, or more precisely, an analytic
Hilbert quotient: the complex-analytic analogue of GIT
quotients. Good expositions can be found in
Heinzner--Huckleberry \cite{hei01,hei00} or Greb
\cite[\S2--3]{greb2010b}; we summarise the main points in
this section. See also \cite[\S2.4]{tel04}
\cite[\S2]{greb2010} \cite[\S2]{greb2015}
\cite[\S1]{greb2010b} \cite[\S0]{hei96} \cite{hei98}
\cite{hhl94} \cite{hei94}.

\subsubsection{Analytic Hilbert quotients}\label{jncd3to1}

\begin{definition}\label{q6po9pgr}
Let $(X,\O_X)$ be a complex-analytic space and $G$ a complex
reductive group acting holomorphically on $X$. An
\defn{analytic Hilbert quotient} of $X$ by $G$ is a
complex-analytic space $(Y,\O_Y)$ together with a
$G$-invariant surjective holomorphic map $\pi:X\to Y$ such
that:
\begin{itemize}
\item[(i)] the map $\pi:X\to Y$ is \defn{locally Stein},
i.e.\ $Y$ has a cover by Stein open sets whose preimages are
Stein;
\item[(ii)] $\O_Y=(\pi_*\O_X)^G$.
\end{itemize}
\end{definition}

An important consequence of this definition is that, if it
exists, an analytic Hilbert quotient is a categorial
quotient for complex-analytic spaces. In particular, it is
unique up to biholomorphisms. We denote it
\[
X\sll{}G\coloneqq \text{the analytic Hilbert quotient of $X$
by $G$ (if it exists).}
\]
Topologically, $X\sll{}G$ is the quotient of $X$ by the
equivalence relation $x\sim y$ if $\overline{G\cdot
x}\cap\overline{G\cdot y}\ne\emptyset$ and $\pi:X\to
X\sll{}G$ is the corresponding quotient map. The space
$X\sll{}G$ can also be viewed as the set of closed
$G$-orbits, i.e.\ by defining the set of \defn{polystable}
points
\[
X\ps\coloneqq \{x\in X:\text{the orbit $G\cdot x$ is closed
in $X$}\},
\]
the inclusion $X\ps\s X$ descends to a bijection $X\ps/G\to
X\sll{}G$. In particular, for every $p\in X\sll{}G$, there
is a unique closed $G$-orbit in the fibre $\pi^{-1}(p)\s X$. 

\begin{example}[GIT quotients]\label{m0r6tknt}
Let $X$ be a complex affine variety, $G$ a complex reductive
group acting algebraically on $X$, and consider the affine
GIT quotient $X\sll{}G\coloneqq\Spec \C[X]^G$ together with
the morphism $X\to X\sll{}G$ induced by the inclusion
$\C[X]^G\hookrightarrow\C[X]$. Then, the analytification of
$X\to X\sll{}G$ is an analytic Hilbert quotient
\cite[\S6.4]{hei91}. More generally, since complex affine
varieties are Stein, the analytification of any GIT quotient
is an analytic Hilbert quotient.
\end{example}

We will later need the following properties of analytic
Hilbert quotients:

\begin{proposition}\label{wfgxy42b}
Let $\pi:X\to X\sll{}G$ be an analytic Hilbert quotient. 
\begin{itemize}
\item[\textup{(i)}] An open set $U\s X$ is $G$-saturated if
and only if it is saturated with respect to $\pi$. In that
case, $U\sll{}G\coloneqq \pi(U)$ is open in $X\sll{}G$ and
the restriction $U\to U\sll{}G$ is an analytic Hilbert
quotient. 
\item[\textup{(ii)}] If $Y\s X$ is a $G$-invariant closed
complex-analytic subspace, then $Y\sll{}G\coloneqq \pi(Y)$
is a closed complex-analytic subspace of $X\sll{}G$ and the
restriction $Y\to Y\sll{}G$ is an analytic Hilbert
quotient.\hfill\qed
\end{itemize}
\end{proposition}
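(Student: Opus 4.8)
The plan is to reduce both parts to the two foundational facts recorded in the excerpt: that $\pi$ is a topological quotient map for the relation $x\sim y\iff\overline{G\cdot x}\cap\overline{G\cdot y}\ne\emptyset$, and that each fibre $\pi^{-1}(q)$ contains a unique closed $G$-orbit. Write $Q\coloneqq X\sll{}G$. The one preliminary observation I need is that $\pi$ collapses orbit closures: since $\pi$ is continuous and $G$-invariant and $Q$ is Hausdorff, $\pi(\overline{G\cdot x})\s\overline{\{\pi(x)\}}=\{\pi(x)\}$, so $\overline{G\cdot x}\s\pi^{-1}(\pi(x))$. As $\overline{G\cdot x}$ is closed and $G$-invariant it contains a closed orbit (a standard fact for reductive actions), and that orbit lies in the fibre $\pi^{-1}(\pi(x))$, so it must be the unique closed orbit $C$ of that fibre; thus $C\s\overline{G\cdot x}$.

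For part (i) I would first prove the equivalence, then openness, then the quotient structure. If $U$ is saturated with respect to $\pi$, i.e.\ $U=\pi^{-1}(\pi(U))$, then for $a\in U$ we have $\overline{G\cdot a}\s\pi^{-1}(\pi(a))\s U$, so $U$ is $G$-saturated. Conversely, let $U$ be open and $G$-saturated; then $U$ is $G$-invariant (as $a\in U$ gives $G\cdot a\s\overline{G\cdot a}\s U$), so $X\setminus U$ is closed and $G$-invariant. Given $a\in U$ and $b\in\pi^{-1}(\pi(a))$, the closed orbit $C$ of that fibre satisfies $C\s\overline{G\cdot a}\s U$; were $b\notin U$ we would get $\overline{G\cdot b}\s X\setminus U$ and hence $C\s\overline{G\cdot b}\s X\setminus U$, contradicting $C\s U$, so $b\in U$ and $U$ is $\pi$-saturated. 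Openness of $\pi(U)$ is then immediate, since $\pi$ is a topological quotient map and $\pi^{-1}(\pi(U))=U$ is open. Finally, to see that $\pi|_U\colon U\to\pi(U)$ is an analytic Hilbert quotient, I would cover $\pi(U)$ by Stein opens $W$ with Stein $\pi$-preimage (these form a basis of $Q$); saturation gives $\pi^{-1}(W)\s\pi^{-1}(\pi(U))=U$, so each $\pi^{-1}(W)$ is Stein and is precisely the $\pi|_U$-preimage of $W$, verifying the locally Stein condition, while the same saturation shows that over $W\s\pi(U)$ the $G$-invariant holomorphic functions on $\pi^{-1}(W)\s U$ are the same whether computed for $\pi$ or for $\pi|_U$, so $\O_{\pi(U)}=\bigl((\pi|_U)_*\O_U\bigr)^G$.

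For part (ii) the approach is to work locally over Stein pieces and invoke exactness of the invariants functor. Cover $Q$ by Stein opens $W$ with $R\coloneqq\pi^{-1}(W)$ Stein, so $R\to W$ is an analytic Hilbert quotient with $\O(W)=\O(R)^G$. Then $Y\cap R$ is a $G$-invariant closed complex subspace of the Stein space $R$, cut out by a $G$-invariant ideal $\I\s\O(R)$. Since $G$ is reductive, the Reynolds operator makes $(-)^G$ exact on the relevant modules, so $\O(R)^G\to(\O(R)/\I)^G$ is surjective with kernel $\I\cap\O(R)^G$. Hence $\pi(Y\cap R)$ is the closed complex subspace of $W$ defined by the invariant ideal $\I\cap\O(R)^G$, with ring of invariants $(\O(R)/\I)^G$, and $Y\cap R\to\pi(Y\cap R)$ is its analytic Hilbert quotient (its preimage in $Y\cap R$ is closed in the Stein space $R$, hence Stein). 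These local descriptions are canonical, so they glue over the cover to exhibit $\pi(Y)$ as a closed complex subspace of $Q$ and $Y\to\pi(Y)$ as an analytic Hilbert quotient.

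I expect the genuinely substantial input to be the Stein-theoretic machinery rather than the orbit-geometry bookkeeping: namely that Stein opens with Stein preimage form a basis of $Q$, and the exactness of $(-)^G$ on sections over Stein spaces (equivalently, the existence and continuity of the Reynolds operator in the analytic category). These are precisely the foundational results of Heinzner--Huckleberry for reductive group actions on Stein spaces; granting them, both parts follow from the formal manipulations above.
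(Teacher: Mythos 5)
Your proposal and the paper diverge at the outset for a simple reason: the paper does not prove Proposition \ref{wfgxy42b} at all, but cites Heinzner--Migliorini--Polito \cite{hei98} (the \S2 Remark and \S1 Corollary for (i), and \S1(ii) for (ii)). What you have written is a reconstruction of the proofs behind that citation, and for part (i) it is essentially complete and correct. One remark: you can avoid invoking the ``standard fact'' that closed invariant sets contain closed orbits, which is not obvious for holomorphic actions on general complex spaces; it follows directly from your two recorded facts, since $\pi(c)=\pi(x)$ for $c$ in the unique closed orbit $C$ of the fibre forces $\overline{G\cdot x}\cap \overline{G\cdot c}=\overline{G\cdot x}\cap C\ne\emptyset$, whence $C\s\overline{G\cdot x}$ by $G$-invariance of the closure. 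The one genuinely non-formal input you need, namely that Stein open sets with Stein preimage form a basis of $X\sll{}G$, is precisely the content of the \S1 Corollary of \cite{hei98} that the paper cites (preimages of Stein open sets under an analytic Hilbert quotient are Stein), so your attribution is accurate and there is no circularity; your version has the merit of isolating that single Stein-theoretic fact and showing everything else is orbit bookkeeping.

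Part (ii), however, has a concrete gap beyond the machinery you acknowledge. From surjectivity of $\O(R)^G\to(\O(R)/\I)^G$ you conclude (``Hence'') that $\pi(Y\cap R)$ \emph{is} the closed complex subspace of $W$ cut out by $\I\cap\O(R)^G$. The nontrivial point hiding in that step is set-theoretic: why is every point of the zero set of $\I\cap\O(R)^G$ actually in $\pi(Y\cap R)$? A priori the zero set could be strictly larger, and this is exactly where reductivity enters geometrically (it is what fails for non-reductive groups). To close it, take $q\in W$ with $\pi^{-1}(q)\cap Y=\emptyset$, note that $(Y\cap R)\sqcup\pi^{-1}(q)$ is a $G$-invariant closed complex subspace of $R$, and apply your exactness statement to produce an invariant function vanishing on $Y\cap R$ and equal to $1$ on $\pi^{-1}(q)$; this is fixable with the tools you already have on the table, but as written the step is absent. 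Two smaller points in the same direction: you need coherence of the ideal sheaf of $\O_W$ generated by $\I\cap\O(R)^G$, and $\O_{\pi(Y)}=(\pi_*\O_Y)^G$ is a \emph{sheaf} condition, so the global-sections computation must be run over a basis of Stein opens, not just over $W$. Both issues, and the gluing you dispatch as ``canonical,'' are handled most cleanly by defining the candidate ideal sheaf once and for all as $(\pi_*\I_Y)^G\s\O_{X\sll{}G}$, where $\I_Y$ is the ideal sheaf of $Y$, and using your local Stein computation only to verify that this sheaf is coherent and cuts out exactly $\pi(Y)$; then there is nothing left to glue.
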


For (i) see \cite[\S2 Remark and \S1 Corollary]{hei98} and
for (ii) see \cite[\S1(ii)]{hei98}.

\subsubsection{The Heinzner--Loose theorem}\label{4h2x41kr}

Just as for GIT quotients, the question of existence of
analytic Hilbert quotients is a subtle one. In complete
analogy, for an action of a complex reductive group $G$ on a
complex-analytic space $X$, there does not always exist an
analytic Hilbert quotient, but in good cases, one can find a
large open subset of $X$ on which the quotient exists. For
GIT, this set depends on a choice of a linearisation, and
for analytic Hilbert quotients, it depends on a choice of a
moment map for the action of a maximal compact subgroup $K\s
G$, as we now explain. 

Let $(M,K,\mu)$ be an integrable Hamiltonian K\"ahler
manifold and let $G \coloneqq K_\C$. Define the set of
\defn{$\boldsymbol{\mu}$-semistable} points by
\[
M\ass\mu \coloneqq \{p\in M:\overline{G\cdot
p}\cap\mu^{-1}(0)\ne\emptyset\}
\]
and the set of \defn{$\boldsymbol{\mu}$-polystable} points
by
\[
M\aps\mu \coloneqq \{p\in M: G\cdot p\text{ is closed in
}M\ass\mu\}.
\]

\begin{theorem}[Heinzner--Loose \cite{hei94}]
\label{w07wd84e}
The set $M\ass\mu$ is a $G$-invariant open subset of $M$ and
the analytic Hilbert quotient $M\ass{\mu}\sll{}G$ exists.
For all $p \in M$, we have
\begin{equation}\label{ihb5mnx9}
p\in M\aps{\mu} \iff G\cdot p\cap\mu^{-1}(0)\ne\emptyset.
\end{equation}
Moreover, the inclusion $\mu^{-1}(0)\hookrightarrow
M\ass\mu$ descends to a homeomorphism $M\sll{\mu}K\to
M\ass\mu\sll{}G$. Also, for every $p\in M\aps{\mu}$ we have
$G_p=(K_p)_\C$, so $G_p$ is a complex reductive
group.\hfill\qed
\end{theorem}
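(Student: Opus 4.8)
The plan is to deduce every assertion from two analytic inputs: a Kempf--Ness type convexity estimate along the noncompact directions of $G$, and a holomorphic slice theorem near the zero fibre $\mu^{-1}(0)$.

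For the first input, fix $p\in M$ and $X\in\k$ and set $\gamma(t)=\exp(itX)\cdot p$. Since the $G$-action is holomorphic and extends that of $K$, the infinitesimal generator of $iX$ is $\mathsf{I}X^{\#}$, so $\dot\gamma(t)=\mathsf{I}X^{\#}_{\gamma(t)}$. Using $\iota_{X^{\#}}\omega=d\langle\mu,X\rangle$ and the Kähler compatibility $g=\omega(\,\cdot\,,\mathsf{I}\,\cdot\,)$, a one-line computation gives
$$\frac{d}{dt}\langle\mu(\gamma(t)),X\rangle=\|X^{\#}_{\gamma(t)}\|^{2}\ge 0,$$
so this function is non-decreasing, strictly unless $X\in\Lie(K_{\gamma(t)})$. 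These are the geodesic derivatives of a $K$-invariant Kempf--Ness function $\Psi_p$ on the nonpositively curved symmetric space $G/K$, which is therefore convex, with critical set exactly the cosets $gK$ satisfying $\mu(g\cdot p)=0$. Convexity then yields: the critical set of $\Psi_p$, if non-empty, is a single $K$-coset, so $G\cdot p$ meets $\mu^{-1}(0)$ in at most one $K$-orbit; and $G\cdot p$ is closed if and only if $\Psi_p$ attains its infimum, i.e.\ if and only if $G\cdot p\cap\mu^{-1}(0)\ne\emptyset$. This is the equivalence \eqref{ihb5mnx9}, and it shows the inclusion $\mu^{-1}(0)\hookrightarrow M\ass{\mu}$ induces a bijection $M\sll{\mu}K\to M\aps{\mu}/G$.

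For the second input, fix $x_0\in\mu^{-1}(0)$ and put $H=K_{x_0}$, a compact group fixing $x_0$. The aim is an $H_\C$-invariant locally closed complex slice $S\ni x_0$ for which the natural map $G\times_{H_\C}S\to M$ is a biholomorphism onto a $G$-saturated open neighbourhood $U$ of $G\cdot x_0$. Granting this, shrink $S$ to a Stein $H_\C$-invariant neighbourhood; because $H_\C$ is reductive, the analytic Hilbert quotient $S\sll{}H_\C$ exists and $U\sll{}G\cong S\sll{}H_\C$. This simultaneously shows $U\subseteq M\ass{\mu}$, gives local existence of the quotient, and—since the slice is transverse to $G\cdot x_0$ with stabiliser computed inside $S$—yields $G_{x_0}=(K_{x_0})_\C=H_\C$, the reductivity statement. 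Letting $x_0$ range over $\mu^{-1}(0)$ and invoking \eqref{ihb5mnx9} to cover all polystable orbits, every point of $M\ass{\mu}$ acquires such a saturated chart, so $M\ass{\mu}$ is open. To globalise, the local quotients $U\sll{}G$ agree on $G$-saturated overlaps, and because analytic Hilbert quotients are categorical and restrict compatibly to saturated opens (Proposition \ref{wfgxy42b}(i)), these charts patch to a complex space $M\ass{\mu}\sll{}G$ with a locally Stein, $G$-invariant projection $\pi$ satisfying $\O=(\pi_*\O_M)^G$. The bijection $M\sll{\mu}K\to M\ass{\mu}\sll{}G$ from the first input is then defined globally; restricting to the saturated Stein charts, where $\pi$ is proper onto its image, shows it is a closed continuous map, hence a homeomorphism.

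The main obstacle is the holomorphic slice theorem: holomorphic $G$-actions admit no general slice theorem, so the slice must be manufactured from the moment map. Concretely, I would begin from the Marle--Guillemin--Sternberg normal form for the underlying proper $K$-action near $x_0\in\mu^{-1}(0)$, then use integrability—real-analyticity of the action together with the existence of the $G$-action—to complexify the real $K$-slice into an $H_\C$-invariant holomorphic one, and finally verify that $G\times_{H_\C}S\to M$ is not merely an open immersion but a biholomorphism onto a genuinely $G$-\emph{saturated} neighbourhood. It is exactly this saturation, again controlled by the convexity of the first paragraph, that is delicate and where the hypothesis $G=K_\C$ is indispensable.
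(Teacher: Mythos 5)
First, a point of comparison: the paper does not prove this theorem at all. It is quoted from Heinzner--Loose \cite{hei94}, with Remark~\ref{ytvsfzy8} serving only as a dictionary between the assertions above and (1.2), (1.3), (2.2), (2.7) of \cite{hei94} and \S0 of \cite{hei96}; the paper even states explicitly, at the start of \S\ref{4h2x41kr}, that ``the main ingredient in the proof of Heinzner--Loose's theorem is the Holomorphic Slice Theorem.'' So your proposal must be judged against the literature, and its architecture (Kempf--Ness convexity plus holomorphic slices) is indeed the right one. Your monotonicity computation is correct, and it genuinely delivers the \emph{uniqueness} half of the package: the zeros of $\mu$ on a single $G$-orbit form one $K$-orbit, and, via periodicity of $t\mapsto\langle\mu(\exp(itX)\cdot p),X\rangle$ when $\exp(iX)\in G_p$, the identity $G_p=(K_p)_\C$ for $p\in\mu^{-1}(0)$. (Two slips here: the critical set of $\Psi_p$ is the image of $KG_pK$ in $G/K$, a single coset only when $G_p\s K$; and one must verify closedness of the relevant $1$-form on $G/K$ before $\Psi_p$ exists.)

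The genuine gap is the sentence ``$G\cdot p$ is closed if and only if $\Psi_p$ attains its infimum, i.e.\ if and only if $G\cdot p\cap\mu^{-1}(0)\ne\emptyset$.'' Note first that polystability in \eqref{ihb5mnx9} means closed in $M\ass{\mu}$, not in $M$; with that reading, the implication ``polystable $\Rightarrow$ meets $\mu^{-1}(0)$'' is immediate from the definitions, so the entire content of \eqref{ihb5mnx9} is the converse: if $G\cdot p$ meets $\mu^{-1}(0)$ then $G\cdot p$ is closed in $M\ass{\mu}$. Geodesic convexity of $\Psi_p$ constrains $\mu$ \emph{along the orbit} but says nothing about limit points $q\in\overline{G\cdot p}\setminus G\cdot p$; excluding a semistable such $q$ requires a properness or local-model argument (already in the linear Kempf--Ness theorem this step needs a limiting argument with Cartan decompositions, and in the present non-linear, non-compact setting \cite{hei94} and \cite{sja95} obtain precisely this closedness statement \emph{from} the slice theorem, not before it). This inverts your logical order and makes the proposal circular: you invoke \eqref{ihb5mnx9} in the first paragraph, prior to any slice theorem, and then say that the saturation needed for the slice theorem is ``again controlled by the convexity of the first paragraph.'' Since you also (honestly) leave the slice theorem itself as an unproven obstacle, what your proposal actually establishes is only the uniqueness statements; openness of $M\ass{\mu}$, existence of $M\ass{\mu}\sll{}G$, the closedness direction of \eqref{ihb5mnx9}, and the homeomorphism all still rest on the missing ingredient, which is exactly the hard content of Theorem~\ref{tos2xb00}. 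Two further corrections: your derivation of $G_{x_0}=(K_{x_0})_\C$ from the slice is backwards, since injectivity of $G\times_{H_\C}S\to M$ already presupposes $G_{x_0}=H_\C$, so reductivity of stabilizers must be proved (by the convexity argument) \emph{before} the slice can be built; and the final step fails as stated because analytic Hilbert quotient maps are not proper (any fibre containing a non-closed orbit is a positive-dimensional Stein space, hence non-compact) --- the standard substitute is that $\pi$ maps $G$-invariant closed sets to closed sets, as in \cite[Remark 1.1]{hei98}.
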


\begin{remark}
Special cases of Theorem \ref{w07wd84e} were known long
before \cite{hei94}. See, for example, Guillemin--Sternberg
\cite[\S4]{gui82-geo} and Kirwan \cite[\S7.5]{kir84}. It was
also obtained independently by Sjamaar \cite{sja95} under an
additional assumption on the moment map. This result can be
thought of as an ``analytic'' version of the Kempf--Ness
theorem.
\end{remark}

\begin{remark}
Heinzner--Loose \cite{hei94} do not mention analytic Hilbert
quotients directly, but the above theorem can be deduced
from their proofs. The reformulation which we gave can be
found in Heinzner--Huckleberry \cite[\S0]{hei96}.
\end{remark}

The main ingredient in the proof of Heinzner--Loose's
theorem is the Holomorphic Slice Theorem. We briefly review
it here, since we will use it later. If $H$ is a complex Lie
subgroup of a complex Lie group $G$ and $S$ is a complex
$H$-manifold, we denote by $G\times_HS$ the quotient of
$G\times S$ by the $H$-action $h\cdot(g,x)=(gh^{-1},h\cdot
x)$. Since the $H$-action is free and proper, there is a
unique complex manifold structure on $G\times_HS$ such that
$G\times S\to G\times_HS$ is a holomorphic submersion.

\begin{definition}
Let $G$ be a complex reductive group acting holomorphically
on a complex manifold $M$. A \defn{holomorphic slice} at a
point $p$ in $M$ is a $G_p$-invariant complex submanifold
$S\s M$ containing $p$ such that $G\cdot S$ is open in $M$
and the map
\[
G\times_{G_p}S\too G\cdot S,\quad [g,x]\mtoo g\cdot x
\]
is a $G$-equivariant biholomorphism.
\end{definition}

\begin{theorem}[{Holomorphic Slice Theorem
\cite[\S2.7]{hei94} \cite[Theorem 1.12]{sja95}}]
\label{tos2xb00}
Let $(M,K,\mu)$ be an integrable Hamiltonian K\"ahler
manifold. Then, there exists a holomorphic slice at every
point $p \in M\aps\mu$.\hfill\qed
\end{theorem}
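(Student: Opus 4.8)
The plan is to reduce to a point of $\mu^{-1}(0)$ and there manufacture a slice by complexifying the slice coming from the proper $K$-action, using the moment map to control the extra ``imaginary'' directions. First I would observe that having a slice is a $G$-equivariant property: if $S$ is a slice at $p$ then $g\cdot S$ is a slice at $g\cdot p$. By the Heinzner--Loose theorem (Theorem \ref{w07wd84e} and \eqref{ihb5mnx9}), every $p\in M\aps{\mu}$ satisfies $G\cdot p\cap\mu^{-1}(0)\ne\emptyset$, so I may assume from the outset that $p\in\mu^{-1}(0)$. At such a point the same theorem gives $H\coloneqq G_p=(K_p)_\C$, a complex reductive group with maximal compact subgroup $K_p$ acting $\C$-linearly on $T_pM$ through the isotropy representation.

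Next I would produce the infinitesimal model. Averaging the Kähler metric over the compact group $K_p$ gives a $K_p$-invariant Hermitian structure on $T_pM$; since the orbit direction $\g\cdot p=T_p(G\cdot p)$ is a complex $K_p$-subspace, its orthogonal complement $W$ is a $K_p$-invariant complex complement, and because $W$ is complex and $K_p$-stable it is in fact $H$-invariant (the real algebra $\k_p$ preserves $W$ $\C$-linearly, hence so does $i\k_p$, and $H$ is generated by $K_p$ and $\exp(i\k_p)$). The identity $\g_p=(\k_p)_\C$ also yields the refinement $\g\cdot p=\k\cdot p\oplus\mathsf{I}(\k\cdot p)$: if $\xi^\#_p=\mathsf{I}\eta^\#_p=(i\eta)^\#_p$ with $\xi,\eta\in\k$, then $\xi-i\eta\in\g_p=\k_p\oplus i\k_p$, forcing $\xi\in\k_p$ and $\xi^\#_p=0$. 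Standard compact-group techniques then produce a $K_p$-invariant complex submanifold $S_0\ni p$ with $T_pS_0=W$, and by construction the derivative of the natural map $G\times_H S_0\to M$, $[g,x]\mapsto g\cdot x$, at $[e,p]$ is the isomorphism $\g\cdot p\oplus W\xrightarrow{\sim}T_pM$. Hence this map is a local biholomorphism near $[e,p]$, and by $G$-equivariance along the whole orbit $G\cdot[e,p]$.

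The main obstacle is to upgrade this local statement to a genuine slice, i.e.\ to shrink $S_0$ to an $H$-invariant submanifold $S$ on which $G\times_H S\to G\cdot S$ is injective with open image. Injectivity is the crux: one must exclude sequences $s_n,s_n'\to p$ in $S_0$ and $a_n\in G$ with $a_n\cdot s_n'=s_n$ but $a_n\notin H$. Here I would invoke the polar decomposition $G=K\cdot\exp(i\k)$ and split the analysis. The $K$-part is controlled by properness of the $K$-action together with the compact slice theorem for the $K$-manifold $M$ (as in \S\ref{kdjccddu}), which already confines the $K$-component of $a_n$ near $K_p$. The essential new input is the behaviour along the imaginary directions $\mathsf{I}(\k\cdot p)$ swept out by $\exp(i\k)$, and this is governed by the moment map: for $\xi\in\k$,
$$\frac{d}{dt}\langle\mu(\exp(it\xi)\cdot x),\xi\rangle=\lvert\xi^\#\rvert^2\ge 0$$
(the integrand being evaluated at $\exp(it\xi)\cdot x$, since $\exp(it\xi)$ is the flow of $\mathsf{I}\xi^\#=\operatorname{grad}\langle\mu,\xi\rangle$). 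Thus $t\mapsto\langle\mu(\exp(it\xi)\cdot x),\xi\rangle$ is non-decreasing and the associated Kempf--Ness function is convex, with $p\in\mu^{-1}(0)$ a minimum along every imaginary ray. This convexity prevents the $\exp(i\k)$-component of $a_n$ from escaping to infinity while $a_n\cdot s_n'$ stays near $p$, and so forces $a_n$ into $H$ once $S_0$ is shrunk sufficiently, giving the contradiction. Openness then follows from the local biholomorphism property, and this moment-map convexity estimate --- the analytic heart of the Heinzner--Loose and Sjamaar arguments --- is the one step that requires genuine work, the remainder being formal.
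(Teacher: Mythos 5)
Your opening reduction is exactly the paper's own contribution to this statement: Theorem \ref{tos2xb00} is quoted from Heinzner--Loose \cite{hei94} and Sjamaar \cite{sja95}, and the only argument the paper supplies is the remark that, since $M\aps{\mu}=G\cdot\mu^{-1}(0)$ by \eqref{ihb5mnx9} and the slice property is transported by the $G$-action, it suffices to have slices at points of $\mu^{-1}(0)$. Everything after your first paragraph is an attempt to reprove the cited result itself, and there the proposal has two genuine gaps.

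First, the step ``shrink $S_0$ to an $H$-invariant submanifold $S$'' cannot work as stated. The group $H=(K_p)_\C$ is non-compact unless $K_p$ is finite, so an $H$-invariant set is a union of $H$-orbits, which are in general unbounded in any linearizing chart; no amount of shrinking a $K_p$-invariant submanifold produces $H$-invariance, yet the definition of a slice demands it. The actual proofs do the opposite of shrinking: they take a small ball $B$ in the slice directions and pass to its saturation $H\cdot B$, and the hard point --- occupying much of Heinzner--Loose's \S 2 and Sjamaar's proof, and reflected in the paper's Proposition \ref{3bo1a7fn}, Remark \ref{c9lhmxfq}, and the appeal to Snow \cite{sno82} --- is to prove that this saturation is still a well-behaved (Stein, orbit-convex) submanifold on which $G\times_H(H\cdot B)\to M$ remains biholomorphic onto an open image. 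Second, the injectivity argument is only gestured at: the monotonicity $\frac{d}{dt}\langle\mu(\exp(it\xi)\cdot x),\xi\rangle=\lvert\xi^\#\rvert^2\ge 0$ is correct, but the passage from this one-parameter convexity to ``$a_n$ is forced into $H$'' --- uniformly over all of $G$, for pairs of points merely near $p$ rather than on $\mu^{-1}(0)$, and intertwined with the $K$-part of the polar decomposition --- is precisely the analytic content of the cited theorem, and you acknowledge leaving it undone. So the proposal correctly reproduces the paper's reduction to $\mu^{-1}(0)$, but as a proof of the theorem it is incomplete at exactly the two places where Heinzner--Loose and Sjamaar do real work.
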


\begin{remark}
In \cite{hei94}, this is stated only for points $p\in M$
such that $\mu(p)$ is fixed by the coadjoint action, but
since $M\aps\mu=G\cdot\mu^{-1}(0)$ we deduce the above
version.
\end{remark}

This theorem enables us to study $G$-equivariant local
properties of the complex manifold $M$ near a closed orbit
of $M\ass\mu$ by the local model $G\times_{G_p}S$. This was
used by Heinzner--Loose to prove the existence of the
analytic Hilbert quotient.

\subsubsection{Stratification of analytic Hilbert
quotients}\label{am5r1mlu}

Let $\pi:X\to X\sll{}G$ be an analytic Hilbert quotient
(e.g.\ a GIT quotient). Then, as in \S\ref{kdjccddu}, the
orbit space $X\ps/G$ has a partition by $G$-orbit-types,
i.e.\ the pieces are the connected components of the sets
$(X\ps)_{(H)}/G$ for $H\s G$. Then, the bijection $X\ps/G\to
X\sll{}G$ defines a natural partition on $X\sll{}G$ which we
call the \defn{$\boldsymbol{G}$-orbit-type partition}.
Equivalently, the orbit-type of a point $p\in X\sll{}G$ is
defined to be the orbit-type of the unique closed orbit in
$\pi^{-1}(p)$.

If $(M,K,\mu)$ is a Hamiltonian K\"ahler manifold, then
$M\sll{\mu}K\cong M\ass{\mu}\sll{}G$ is an analytic Hilbert
quotient and hence has a $G$-orbit-type partition. But it
also has the $K$-orbit-type partition of Sjamaar--Lerman.
Moreover, each stratum in the $K$-orbit-type partition is a
K\"ahler manifold, and hence has a complex structure. The
next result shows that these partitions and complex
structures are the same.

\begin{theorem}[{Sjamaar \cite[Theorem
2.10]{sja95}}]\label{k1q1jx4c}\
\begin{itemize}
\item[\textup{(i)}] The homeomorphism $M\sll{\mu}K\to
M\ass{\mu}\sll{}G$ is an isomorphism of partitioned spaces. 
\item[\textup{(ii)}] The $G$-orbit-type strata of
$M\ass\mu\sll{}G$ are complex submanifolds.
\item[\textup{(iii)}] Let $S$ be a $K$-orbit-type stratum in
$M\sll{\mu}K$ and $S'$ the corresponding $G$-orbit-type
stratum in $M\ass\mu\sll{}G$. Then, the restriction $S\to
S'$ is a biholomorphism with respect to K\"ahler structure
on $S$ and the complex structure on $S'$ obtained from
\textup{(ii)}.\hfill\qed
\end{itemize}
\end{theorem}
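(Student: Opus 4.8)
The plan is to reduce all three statements to a local computation at a polystable point by means of the Holomorphic Slice Theorem (Theorem \ref{tos2xb00}), and then to compare the two partitions and the two complex structures in the resulting linear model. Fix $p\in\mu^{-1}(0)$; by \eqref{ihb5mnx9} the orbit $G\cdot p$ is closed in $M\ass\mu$, so $p$ is polystable and $G_p=(K_p)_\C$ is reductive by Theorem \ref{w07wd84e}. The slice theorem then provides a $G_p$-invariant complex submanifold $\Sigma\ni p$ with $G\times_{G_p}\Sigma\cong G\cdot\Sigma$ open in $M\ass\mu$; after shrinking, I may identify $\Sigma$, $G_p$-equivariantly and $\mathsf{I}$-holomorphically, with a neighbourhood of $0$ in the slice representation $W\coloneqq T_p\Sigma$. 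Passing to quotients via Proposition \ref{wfgxy42b}(i) yields a biholomorphism of a neighbourhood of $\pi(p)$ in $M\ass\mu\sll{}G$ with a neighbourhood of $0$ in the linear quotient $W\sll{}G_p$, carrying the $G$-orbit type partition to the $G_p$-orbit type partition of $W\sll{}G_p$.

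For part (i) I track orbit types through the homeomorphism $\mu^{-1}(0)/K\to M\ass\mu\sll{}G$, which sends $Kp\mapsto\pi(p)$. Since $G\cdot p$ is the closed orbit in $\pi^{-1}(\pi(p))$, the $G$-orbit type of the image is $(G_p)=((K_p)_\C)$. It therefore suffices to show that $H\mapsto H_\C$ induces a bijection between the $K$-conjugacy classes of stabilizers $K_p$ and the $G$-conjugacy classes of the $G_p$. The forward implication is immediate, as $k(K_p)_\C k^{-1}=(kK_pk^{-1})_\C$ for $k\in K$; the converse follows, after reducing via the conjugacy of maximal compact subgroups of the complexified stabilizers, from the statement that two closed subgroups of $K$ conjugate in $G$ are already conjugate in $K$. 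This last point I would prove using the Cartan decomposition $G=K\exp(i\k)$ together with the Cartan involution, which forces a positive element conjugating one compact subgroup of $K$ onto another to centralise it. Once the orbit-type sets correspond, their connected components — the strata — correspond as well because the map is a homeomorphism, giving the isomorphism of partitioned spaces.

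For parts (ii) and (iii) I work inside the linear model $W\sll{}G_p$. The $G_p$-orbit type stratum through $0$ is $W^{G_p}\sll{}G_p=W^{G_p}$, a linear subspace and hence a non-singular complex submanifold; a Luna-type argument (the slice theorem applied again at nearby closed orbits) shows every orbit-type stratum is locally of this form, proving (ii). For (iii) I compare this with the Kähler structure on the Sjamaar--Lerman stratum $S=M_{K_p}\sll{\mu_{K_p}}L$ of \S\ref{r7gzqckk}. Near $p$ the fixed-point set $M^{K_p}$ is a complex submanifold of $M$, since $K_p$ acts $\mathsf{I}$-holomorphically through $G$, and under the slice identification it corresponds to $W^{K_p}=W^{(K_p)_\C}=W^{G_p}$, the middle equality holding because $K_p$ is Zariski-dense in its complexification acting $\C$-linearly on $W$. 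Thus both the Kähler stratum $S$ and the analytic Hilbert stratum $S'$ are carried onto $W^{G_p}$ with the single complex structure inherited from $\mathsf{I}$, so the canonical map $S\to S'$ is locally, and therefore globally, a biholomorphism.

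The main obstacle I anticipate is part (iii). A priori the complex structure on $S$ arises from \emph{Kähler} reduction — cutting out the real-analytic locus $\mu_{K_p}^{-1}(0)$ and dividing by the compact group $L$ — whereas the structure on $S'$ arises from the \emph{holomorphic} analytic Hilbert quotient, built from closed $G$-orbits. Reconciling them amounts to a stratum-wise Kempf--Ness statement: inside the slice, the zero level of the compact moment map meets each closed complex orbit in exactly one compact orbit. Making this correspondence compatible with the complex structures, rather than merely matching underlying sets, is the delicate step, and it is precisely here that the integrability hypothesis, through Theorem \ref{w07wd84e}, is essential.
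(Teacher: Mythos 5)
The paper does not actually prove this theorem: it is quoted from Sjamaar \cite[Theorem 2.10]{sja95} (with a remark that (iii) is implicit in Sjamaar's proof), so your proposal must stand on its own. Parts (i) and (ii) essentially do. For (i), reducing to the statement that two closed subgroups of $K$ conjugate in $G$ are already conjugate in $K$ is the right move, and your Cartan-decomposition argument for it is correct in outline: writing the conjugating element as $k\exp(iX)$ and applying the Cartan involution to each conjugate $ghg^{-1}$ shows that $\exp(2iX)$ \emph{centralizes} the subgroup, and since the centralizer is a closed, $\theta$-stable, algebraic subgroup it also contains the square root $\exp(iX)$ (this last step is the standard lemma on roots of positive elements; it deserves explicit mention). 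This is a legitimate alternative to the Mostow-decomposition route that Sjamaar uses and that the paper echoes in Lemma \ref{ped3pamt}. For (ii), two small repairs: you must shrink the slice neighbourhood to a $G$-saturated one before invoking Proposition \ref{wfgxy42b}(i) (this is the role of Proposition \ref{6yupgb8n} and Theorem \ref{8w30qpuz}), and the stratum through $0$ is not literally ``$W^{G_p}\sll{}G_p=W^{G_p}$'' but the image of $W^{G_p}$ in $W\sll{}G_p$, which is a non-singular subspace because $W\sll{}G_p\cong W^{G_p}\times(W'\sll{}G_p)$ for a $G_p$-invariant complement $W'$, exactly as the paper argues in \S\ref{zevdyull}.

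The genuine gap is in (iii), precisely where you flag ``the delicate step''. The claim your argument rests on --- that under the slice identification $M^{K_p}$ near $p$ corresponds to $W^{K_p}$ --- is false: in the model $G\times_{G_p}B$ the $K_p$-fixed locus is the image of $\{(g,w):g^{-1}K_pg\subseteq(G_p)_w\}$, which contains all the $N_G(K_p)$-directions along the orbit; already for $K_p=\{1\}$ your claim would identify a neighbourhood of $p$ in $M$ with $W$, off by $\dim G$. What does correspond to $W^{G_p}$ is not $M^{K_p}$ but the stratum downstairs: locally $S'\cong W^{G_p}$ via projecting the locus $Z$ of polystable points of orbit type $(G_p)$, with $Z\cong G/G_p\times W^{G_p}$, onto its second factor. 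The actual content of (iii), which your argument never touches, is that the complex structure on $S$ coming from \emph{K\"ahler reduction} of $M_{K_p}$ by the free $L$-action --- realized at $q\in\mu^{-1}(0)\cap Z$ by the $\mathsf{I}$-invariant subspace $C_q=T_q(\mu^{-1}(0)\cap Z)\cap\mathsf{I}\,T_q(\mu^{-1}(0)\cap Z)$ --- is carried $\C$-linearly by this projection; one must check, say, that $C_q\cap T_q(G\cdot q)=0$ (using $T_q(G\cdot q)\cap\ker(d\mu)_q=T_q(K\cdot q)$ and positivity of the metric) together with a dimension count, so that $d\pi$ restricts to a $\C$-linear isomorphism $C_q\to W^{G_p}$. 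Your proposal replaces this verification with the set-theoretic statement that $S$ and $S'$ match, which is just (i) again, and your final paragraph concedes that reconciling the two complex structures is left open. So as written you have (i) and (ii) (modulo the repairs above), but not (iii).
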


\begin{remark} (iii) is not stated in this way in
\cite{sja95}, but is part of the
proof.
\end{remark}

\begin{remark}
As explained earlier, Sjamaar \cite{sja95} obtained
Heinzner--Loose's Theorem \ref{w07wd84e} independently, but
under an additional assumption on the moment map which he
called \textit{admissibility}. He then stated Theorem
\ref{k1q1jx4c} under the same assumption, but his proof
relies only on the validity of Theorem \ref{w07wd84e} but
not on the admissibility of the moment map.
\end{remark}

\subsection{Hyperk\"ahler quotients}\label{wtcqem80}

The goal of this section is to prove Theorem \ref{a0rzh3o6}.
We follow Dancer--Swann \cite{dan97}, refining slightly
their arguments to get the uniqueness part of the theorem.
The proof is similar to the construction of the K\"ahler
structures on the orbit-type strata of a K\"ahler quotient
explained in \S\ref{r7gzqckk}. 

We use the notation and terminology of the introduction
(\S\ref{zgrptvtu}). Let $(M, K, \mu)$ be a tri-Hamiltonian
hyperk\"ahler manifold and let $S\s M\slll{}K$ be an
orbit-type piece.  Then, $S$ is a connected component of a
set of the form $\mu^{-1}(0)_{(H)}/K$ for some closed
subgroup $H\s K$. The set $M_H$ of points with stabiliser
$H$ is now a hyperk\"ahler submanifold of $M$ and $\mu$
restricts to a hyperk\"ahler moment map
$\mu_H:M_H'\to\mathfrak{l}^*\otimes\R^3\s\k^*\otimes\R^3$
for the free action of $L\coloneqq N_K(H)/H$ on the union
$M_H'$ of the connected components of $M_H$ intersecting
$\mu^{-1}(0)$. Hence, the connected components of
$M_H\slll{}L=\mu_H^{-1}(0)/L$ are smooth hyperk\"ahler
manifolds by the usual hyperk\"ahler quotient construction
\cite[Theorem 3.2]{hit87}. Moreover, the inclusion
$\mu_H^{-1}(0)\s\mu^{-1}(0)_{(H)}$ descends to a
homeomorphism $M_H\slll{}L\to\mu^{-1}(0)_{(H)}/K$ and hence
endows each connected component of $\mu^{-1}(0)_{(H)}/K$
with a hyperk\"ahler structure. To show that this map is
indeed a homeomorphism and also to characterise the
hyperk\"ahler structures, we will need the following lemma.
This result is implicit in Sjamaar--Lerman \cite{sja91} and
Dancer--Swann \cite{dan97}, but we give a short proof for
completeness. 

\begin{lemma}
Let $K$ be a compact Lie group acting smoothly on a smooth
manifold $M$, let $H$ be a closed subgroup of $K$, and let
$L=N_K(H)/H$. Then, $M_H$ and $M_{(H)}$ are smooth
submanifolds of $M$, and the quotients $M_H/L$ and
$M_{(H)}/K$ are topological manifolds with unique smooth
structures such that the quotients maps $M_H\to M_H/L$ and
$M_{(H)}\to M_{(H)}/K$ are smooth submersions. Moreover, the
inclusion $M_H\hookrightarrow M_{(H)}$ descends to a
diffeomorphism $M_H/L\to M_{(H)}/K$.
\end{lemma}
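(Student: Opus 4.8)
The plan is to deduce everything from the slice theorem for proper actions (as in \cite[Theorem 2.7.4]{dui12}). Fix $p\in M_H$, so that $K_p=H$, and set $W\coloneqq T_pM/T_p(K\cdot p)$ with its induced linear $H$-action. The slice theorem produces an $H$-invariant open ball $S\s W$ and a $K$-equivariant diffeomorphism from the associated bundle $K\times_HS$ onto a $K$-invariant neighbourhood of $K\cdot p$, sending $[1,0]$ to $p$. The first task is to read off the local shape of $M_H$ and $M_{(H)}$ from this model. A point $[k,w]$ has stabilizer $k\,H_w\,k^{-1}$, where $H_w\s H$ denotes the stabilizer of $w$ in $H$. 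The key elementary observation is that a closed subgroup $H_w\s H$ which is conjugate in $K$ to $H$ must equal $H$: conjugacy forces $H_w\cong H$, so $H_w$ and $H$ have the same dimension and the same number of connected components, and together with $H_w\s H$ and compactness this gives $H_w=H$. Hence $[k,w]\in M_{(H)}$ iff $w\in S^H$, and $[k,w]\in M_H$ iff moreover $k\in N\coloneqq N_K(H)$. Since $H$ acts linearly on $W$, the fixed set $S^H=S\cap W^H$ is an open subset of a linear subspace, and one obtains $K$-equivariant identifications $M_{(H)}\cap(K\times_HS)\cong(K/H)\times S^H$ and $M_H\cap(K\times_HS)\cong L\times S^H$. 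This exhibits $M_H$ and $M_{(H)}$ as embedded submanifolds (of possibly varying dimension along their components).

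Next I would treat the quotient $M_H/L$. The group $N$ preserves $M_H$ (since $n\cdot q$ has stabilizer $nHn^{-1}=H$) and $H\s N$ acts trivially on $M_H$, so $L=N/H$ acts on $M_H$; this action is free, because $[n]\in L$ fixing $q\in M_H$ forces $n\in K_q=H$, and it is proper since $L$ is compact. By the quotient manifold theorem for free proper actions, $M_H/L$ is a smooth manifold and $M_H\to M_H/L$ is a smooth submersion (indeed a principal $L$-bundle). Uniqueness of the smooth structure is the universal property of surjective submersions: a function on the base is smooth iff its pullback to the total space is smooth, which determines the smooth structure.

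For $M_{(H)}$ I would introduce the $K$-equivariant map $\psi\colon K\times_N M_H\to M_{(H)}$, $[k,q]\mapsto k\cdot q$. A stabilizer computation shows $\psi$ is a bijection: it is surjective because every $q'\in M_{(H)}$ has $K_{q'}=kHk^{-1}$ for some $k$, so $k^{-1}q'\in M_H$ lies in the same $K$-orbit; and it is injective because two points of $M_H$ lying in the same $K$-orbit differ by an element of $N$. The local models above show that $\psi$ is a local diffeomorphism near each point, and a bijective local diffeomorphism is a diffeomorphism. Passing to $K$-quotients then gives $M_{(H)}/K\cong(K\times_N M_H)/K\cong M_H/N=M_H/L$. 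Moreover the composite $K\times M_H\to M_H\to M_H/L$ is an $N$-invariant submersion, hence descends to a submersion $K\times_N M_H\to M_H/L$; under the identification via $\psi$ this is precisely the quotient map, so $M_{(H)}\to M_{(H)}/K$ is a smooth submersion, with smooth structure unique by the same universal property. Finally, tracing through the identifications, the inclusion $M_H\hookrightarrow M_{(H)}$ (that is, $q\mapsto[1,q]$) descends to the isomorphism $M_H/L\to M_{(H)}/K$, which is therefore the asserted diffeomorphism.

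The main obstacle is the slice-theorem bookkeeping in the first paragraph: carrying out the stabilizer computation for $[k,w]$, correctly identifying the fixed-point sets $S^H$, and invoking the little group-theoretic lemma that a closed subgroup of the compact group $H$ conjugate to $H$ must equal $H$. Once these local models are secured, the remaining steps are routine applications of the quotient manifold theorem and the universal property of surjective submersions.
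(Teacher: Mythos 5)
Your proposal is correct and takes essentially the same route as the paper: both arguments rest on the slice theorem for proper actions, extract the identical local models (locally $M_H\cong L\times W_H$ and $M_{(H)}\cong K/H\times W_H$, with $W_H$ the fixed subspace of the slice representation), and identify both quotients locally with $W_H$ so that the induced map is the identity. The only difference is one of detail, not of method: where the paper simply asserts that the map $M_H/L\to M_{(H)}/K$ is clearly bijective and that everything reduces to the local model, you make the globalization explicit --- the group-theoretic fact that a closed subgroup of $H$ conjugate in $K$ to $H$ must equal $H$, the quotient manifold theorem for the free $L$-action, and the bijective local diffeomorphism $K\times_N M_H\to M_{(H)}$ --- which fills in the same steps the paper leaves to the reader.
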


\begin{proof}
This follows easily from the slice theorem for proper group
actions. The map $M_H/L\to M_{(H)}/K$ is bijective, so
everything reduces to local statements. Hence we may assume
(by the slice theorem) that $M=K\times_HW$ for some
representation $W$ of $H$. Then, $M_H=L\times W_H$,
$M_{(H)}=K/H\times W_H$, and $W_H$ is a linear subspace of
$W$ (the set of fixed points of $H$), so $M_H$ and $M_{(H)}$
are smooth submanifolds of $M$. Moreover, $M_H/L=W_H$ and
the quotient map $M_H\to M_H/L$ is the projection $L\times
W_H\to W_H$ and hence is a smooth submersion. Similarly, the
quotient map $M_{(H)}\to M_{(H)}/K$ is the projection
$K/H\times W_H\to W_H$. Under these identifications, the map
$M_{H}/L\to M_{(H)}/K$ is the identity map $W_H\to W_H$.
\end{proof}

\begin{proof}[Proof of Theorem \ref{a0rzh3o6}]
Let $S\s M\slll{}K$ be an orbit-type piece. Let $Z\coloneqq
\mu^{-1}(0)$ so that $S$ is a connected component of
$Z_{(H)}/K$ for some $H\s K$. As explained above, $Z_H$ is a
smooth submanifold of $M_H$ and $Z_H/L$ is a hyperk\"ahler
manifold, where $L\coloneqq N_K(H)/K$. Now, $Z_H/L$ is a
smooth submanifold of $M_H/L$ and its image under the
diffeomorphism $M_H/L\to M_{(H)}/K$ is $Z_{(H)}/K$, so the
latter is also smooth. Hence, by the transversality theorem
applied to $M_{(H)} \to M_{(H)}/K$, $Z_{(H)}$ is a smooth
submanifold of $M_{(H)}$ and $Z_{(H)}\to Z_{(H)}/K$ is a
smooth submersion. Note that $\pi^{-1}(S)$ is open in
$Z_{(H)}$, so it is also a smooth submanifold and the
restriction $\pi^{-1}(S)\to S$ is a smooth submersion.
Moreover, $\pi^{-1}(S)$ has pure dimension since $S$ is
connected and all fibres are diffeomorphic to $K/H$.

To prove the claim about the hyperk\"ahler structure, let
$\eta_{\mathsf{I}},\eta_{\mathsf{J}},\eta_{\mathsf{K}}$ be
the K\"ahler forms on $Z_{(H)}/K$ induced by the
diffeomorphism $Z_H/L\to Z_{(H)}/K$ and consider the
commutative diagram
\[
\begin{tikzcd}
Z_H \arrow[hook]{r}{i}\arrow{d}{\rho} & Z_{(H)}
\arrow{d}{\pi} \arrow[hook]{r}{j} & M\\
Z_H/L \arrow{r}{\varphi} & Z_{(H)}/K.
\end{tikzcd}
\]
We want to show that $\pi^*\eta_{\mathsf{I}} =
j^*\omega_{\mathsf{I}}$ and similarly for $\mathsf{J}$ and
$\mathsf{K}$. By construction, we have
$i^*\pi^*\eta_{\mathsf{I}} =
\rho^*\varphi^*\eta_{\mathsf{I}} =
i^*j^*\omega_{\mathsf{I}}$ and hence
$\pi^*\eta_{\mathsf{I}}$ and $j^*\omega_{\mathsf{I}}$ agree
on $T_pZ_H$ for all $p\in Z_H$. Note that since $d\varphi_p$
and $d\rho_p$ are surjective we have $T_pZ_{(H)}=T_pZ_H+\ker
d\pi_p$. Thus, to prove that $\pi^*\eta_{\mathsf{I}}$ and
$j^*\omega_{\mathsf{I}}$ agree on $T_pZ_{(H)}$ it suffices
to show that if $u\in \ker d\pi_p$ and $v\in T_pZ_{(H)}$
then
$\pi^*\eta_{\mathsf{I}}(u,v)=j^*\omega_{\mathsf{I}}(u,v)$.
Clearly, $\pi^*\eta_{\mathsf{I}}(u,v)=0$ since
$d\pi_p(u)=0$. To show that also
$j^*\omega_{\mathsf{I}}(u,v)=0$, note that $\ker
d\pi_p=T_p(K\cdot p)$ so $u=x^\#_p$ for some $x\in\k$ and
hence $\omega_{\mathsf{I}}(u,v) =
i_{x^\#}\omega_{\mathsf{I}}(v) = d\ip{\mu_{\mathsf{I}},x}(v)
= 0$ since $v\in T_pZ_{(H)}\s \ker(d\mu_{\mathsf{I}})_p$.
Hence, $\pi^*\eta_{\mathsf{I}}$ and $j^*\omega_{\mathsf{I}}$
agree on $T_pZ_{(H)}$ for all $p\in Z_H$ and since they are
$K$-invariant and $K\cdot Z_H=Z_{(H)}$ we conclude that
$\pi^*\eta_{\mathsf{I}}=j^*\omega_{\mathsf{I}}$. The same
argument also shows that
$\pi^*\eta_{\mathsf{J}}=j^*\omega_{\mathsf{J}}$ and
$\pi^*\eta_{\mathsf{K}}=j^*\omega_{\mathsf{K}}$. Since a
hyperk\"ahler structure is completely determined by its
three symplectic forms (e.g.\ $\mathsf{I} =
\omega_{\mathsf{K}}^{-1}\omega_{\mathsf{J}}$), this proves
the proposition.
\end{proof}

\section{A local normal form for the underlying
complex-Hamiltonian manifold}\label{fzal992o}

\subsection{Overview}

The goal of this section is to prove Theorem \ref{vhb1qe4r},
which establishes a local normal form for the underlying
complex-Hamiltonian manifold of a tri-Hamiltonian
hyperk\"ahler manifold analogous to the local normal form of
Guillemin--Sternberg \cite{gui82} and Marle \cite{mar85}.

Throughout this section, $(M, K, \mu)$ is an
$\mathsf{I}$-integrable tri-Hamiltonian hyperk\"ahler
manifold and $G \coloneqq K_\C$. Then, $\omega_\C\coloneqq
\omega_{\mathsf{J}}+i\omega_{\mathsf{K}}$ is a
complex-symplectic form on $(M,\mathsf{I})$ and
$\mu_\C\coloneqq\mu_{\mathsf{J}}+i\mu_{\mathsf{K}}:M\to\g^*$
is a holomorphic moment map for the action of $G$ on
$(M,\mathsf{I},\omega_\C)$ (see \cite[\S3(D)]{hit87}). We
call $(M,\mathsf{I},\omega_\C, G,\mu_\C)$ the
\defn{underlying complex-Hamiltonian manifold} of
$(M,K,\mu)$.

Let $\mu_\R\coloneqq \mu_{\mathsf{I}}$ so that
$(M,K,\mu_\R)$ is an integrable Hamiltonian K\"ahler
manifold as in \S\ref{r7gzqckk}. In particular, we have the
sets $M\ass{\mu_\R}$ and $M\aps{\mu_\R}$ as in
\S\ref{4h2x41kr}, and we will use the notations
\[
\mu_\C^{-1}(0)\ass{\mu_\R}\coloneqq \mu_\C^{-1}(0)\cap
M\ass{\mu_\R}\quad\text{and}\quad
\mu_\C^{-1}(0)\aps{\mu_\R}\coloneqq \mu_\C^{-1}(0)\cap
M\aps{\mu_\R}.
\]

The idea of the local normal form for the moment map is to
show that in a neighbourhood of a point $p\in\mu^{-1}(0)$,
the underlying complex-Hamiltonian manifold
$(M,\mathsf{I},\omega_\C,G,\mu_\C)$ is completely determined
by the representation of $H\coloneqq G_p$ on the
\defn{complex-symplectic slice}
\[
V\coloneqq(T_p(G\cdot
p))^{\omega_\C}/T_p(G\cdot p).
\]
By the Holomorphic Slice Theorem \ref{tos2xb00}, the orbit
$G\cdot p$ is embedded in $M$, so the tangent space
$T_p(G\cdot p)$ is well-defined. We have $T_p(G\cdot
p)\s\ker (d\mu_\C)_p=(T_p(G\cdot p))^{\omega_\C}$ and hence
$V$ is a well-defined complex-symplectic vector space.
Moreover, $H \coloneqq G_p = (K_p)_\C$ (by Theorem
\ref{w07wd84e}), so $H$ is a complex reductive group acting
linearly on $V$ and preserving its complex-symplectic form.
In other words, $p$ determines a complex-symplectic
representation $\rho:H\to\Sp(V,\omega_\C)$. The goal of
Theorem \ref{vhb1qe4r} is to construct a complex-Hamiltonian
manifold $E$ from $G$ and $\rho$ which is isomorphic to a
neighbourhood of $p$ in $(M,\mathsf{I},\omega_\C,G,\mu_\C)$.
The construction of $E$ is the same as the one used by
Guillemin--Sternberg \cite{gui82}, but in a
complex-symplectic setting; see also \cite[\S2]{sja91} and
\cite{los06}.

In \S\ref{s0eqcql1} we recall the constructing of the local
model $E$. In \S\ref{1k5xw5mk}, we prove a holomorphic
version of the Darboux--Weinstein theorem which we will need
for the proof of Theorem \ref{vhb1qe4r}. In
\S\ref{j9bkeg0y}, we reformulate the Holomorphic Slice
Theorem in a way that is more suitable for our purpose.
Finally, we prove Theorem \ref{vhb1qe4r} in
\S\ref{5ench1hh}.

\subsection{The local model}\label{s0eqcql1}

Let $G$ is a complex reductive group, $H$ a reductive
subgroup of $G$, and $(V,\omega_\C)$ a complex-symplectic
representation of $H$. Then, $T^*G$ has the canonical
complex-symplectic form $-d\theta$, where $\theta$ is the
tautological holomorphic 1-form. We identify $T^*G$ with
$G\times\g^*$ via left translation, i.e.\ via the
biholomorphism
\begin{equation}\label{8ir1k3h4}
G\times\g^*\too T^*G,\quad (g,\xi)\mtoo (dL_{g^{-1}})^*(\xi),
\end{equation}
where $L_{g^{-1}}:G\to G$ is left multiplication by
$g^{-1}$. Recall that a Lie group action on any manifold
lifts to a Hamiltonian action on the cotangent bundle. By
considering the action of $G\times G$ on $G$ by left and
right multiplications (i.e.\ $(a,b)\cdot g\coloneqq
agb^{-1}$) its lift to $T^*G=G\times\g^*$ is
$(a,b)\cdot(g,\xi)=(agb^{-1},\Ad_b^*\xi)$, and the moment
map is
\begin{equation}\label{d5hiyzen}
T^*G\too\g^*\times\g^*,\quad(g,\xi)\mtoo(\Ad_g^*\xi,-\xi)
\end{equation}
(see e.g.\ \cite[\S4.4]{abr78}). The representation
$H\to\Sp(V,\omega_\C)$ can also be viewed as a
complex-Hamiltonian $H$-manifold with moment map $\Phi_V: V
\to \h^*$, $\Phi_V(v)(x) = \frac{1}{2}\omega_\C(xv, v)$.
Thus, there is a Hamiltonian action of $H$ on $T^*G\times
V$, where $H$ acts on $T^*G$ as the subgroup $1\times H\s
G\times G$ and on $V$ by the given representation. Let $E$
be the complex-symplectic reduction of $T^*G\times V$ by
$H$. Since the action of $H$ on $T^*G\times V$ is free and
proper, $E$ is a complex-symplectic manifold. Moreover, the
Hamiltonian action of $G\times 1$ on $T^*G$ descends to a
Hamiltonian action of $G$ on $E$, making $E$ into a
complex-Hamiltonian $G$-manifold.

We can also rewrite $E$ in a more convenient form where the
complex moment map for the $G$-action is explicit. First,
note that the complex moment map for the action of $H$ on
$T^*G\times V$ is
\[
\la:T^*G \times V\too\h^*, \quad \la(g, \xi, v) 
= \Phi_V(v) - \xi|_\h.
\]
Take a Hermitian inner-product on $\g$ invariant under the
maximal compact subgroup $K\s G$ and let $\m$ be the
orthogonal complement to $\h$ in $\g$. This defines an
$H$-equivariant isomorphism $\h^*\cong\m^\circ\s\g^*$ so we
can view $\Phi_V$ as taking values in $\g^*$. Then, the map
\[
G\times\h^\circ\times V\too \la^{-1}(0),\quad (g,\xi,v)\mtoo
(g,\xi+\Phi_V(v),v)
\]
is a biholomorphism. The $H$-action on $\la^{-1}(0)$
corresponds to the $H$-action on $G\times\h^\circ\times V$
given by $h\cdot(g,\xi,v)=(gh^{-1},\Ad_h^*\xi,h\cdot v)$, so
$E$ is the holomorphic vector bundle
\begin{equation}\label{nsazd5mw}
E=G\times_H(\h^\circ\times V)
\end{equation}
over $G/H$. In this setup, the Hamiltonian $G$-action is
\begin{equation}\label{2dw7d96v}
G\times E\too E,\quad a\cdot[g,\xi,v]=[ag,\xi,v]
\end{equation}
and the complex moment map is
\begin{equation}\label{0qk7ej1b}
\kappa: G \times_H (\h^\circ\times V) \too \g^*,
\quad[g,\xi,v]\mtoo\Ad_g^*(\xi+\Phi_V(v)).
\end{equation}
We summarise this discussion in the following proposition.

\begin{proposition}\label{yg4qlzkl}
Let $G$ be a complex reductive group, $H$ a reductive
subgroup of $G$, and $V$ a complex-symplectic representation
of $H$. Then, the complex-symplectic manifold
\eqref{nsazd5mw} with the action \eqref{2dw7d96v} and moment
map \eqref{0qk7ej1b} is a complex-Hamiltonian manifold. \qed
\end{proposition}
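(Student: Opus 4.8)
The statement records that the space $E$ built by complex-symplectic reduction is complex-Hamiltonian, so the plan is to verify the three defining properties in turn, the underlying principle being that the Marsden--Weinstein reduction theorem, together with the descent of a commuting Hamiltonian symmetry, holds verbatim in the holomorphic category. First I would set $W\coloneqq T^*G\times V$ with its product complex-symplectic form $\omega$ and recall from the construction that the $H$-action on $W$ is complex-Hamiltonian with moment map $\la(g,\xi,v)=\Phi_V(v)-\xi|_\h$. Since $H$ is a closed reductive subgroup, right translation by $H$ on $G$ is free and proper, hence so is the $H$-action on $W$; in particular all stabilizer Lie algebras vanish, so $\im(d\la)_w=\h^*$ for every $w\in W$ and $0$ is a regular value. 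Thus $\la^{-1}(0)$ is a closed complex submanifold carrying a free and proper $H$-action, the quotient $E=\la^{-1}(0)/H$ is a complex manifold with holomorphic submersion $\pi\colon\la^{-1}(0)\to E$, and under the biholomorphism $(g,\xi,v)\mapsto(g,\xi+\Phi_V(v),v)$ this is exactly the bundle \eqref{nsazd5mw}.

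Next I would construct the reduced complex-symplectic form. Writing $i\colon\la^{-1}(0)\hookrightarrow W$ for the inclusion, one defines $\omega_E$ by $\pi^*\omega_E=i^*\omega$, and the two points to check are that $i^*\omega$ descends and that $\omega_E$ is nondegenerate. Descent holds because $i^*\omega$ is $H$-basic: it is $H$-invariant since $\omega$ is, and it is horizontal since for $X\in\h$ the moment map condition gives $\omega(X^\#,v)=d\langle\la,X\rangle(v)=0$ for every $v$ tangent to $\la^{-1}(0)$. Nondegeneracy is the pointwise statement that, for a complex-symplectic vector space, the coisotropic subspace $T_w\la^{-1}(0)=(T_w(H\cdot w))^{\omega_\C}$ induces a nondegenerate form on its quotient by $T_w(H\cdot w)$; this is ordinary symplectic linear algebra, valid over $\C$ without change, and is the only genuinely substantive point. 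Because $\pi$ is a holomorphic submersion and $i^*\omega$ is holomorphic and closed, $\omega_E$ is holomorphic and closed as well, so $E$ is complex-symplectic.

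Finally I would descend the residual symmetry. The left-translation action of $G$ on $T^*G$, extended trivially to $V$, is holomorphic, preserves $\omega$, commutes with the $H$-action, and preserves $\la$; hence it preserves $\la^{-1}(0)$ and descends along $\pi$ to a holomorphic $\omega_E$-preserving $G$-action, which in the bundle picture is \eqref{2dw7d96v}. Its moment map on $W$ is the left-hand component $\mu_L(g,\xi,v)=\Ad_g^*\xi$ of \eqref{d5hiyzen}; this map is $G$-equivariant and $H$-invariant along $\la^{-1}(0)$, so it descends to a $G$-equivariant holomorphic map $\nu_\C\colon E\to\g^*$ with $d\langle\nu_\C,X\rangle=X^\#\intp\omega_E$ for all $X\in\g$, the latter identity descending because $\pi^*$ is injective on forms. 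Under the parametrization of $\la^{-1}(0)$ above, $\mu_L$ takes the value $\Ad_g^*(\xi+\Phi_V(v))$, which is precisely \eqref{0qk7ej1b}. The only step demanding care is the holomorphic Marsden--Weinstein reduction of the second paragraph, but its proof is pointwise symplectic linear algebra plus descent of basic forms along a holomorphic principal bundle, so no new difficulty arises beyond bookkeeping with the conventions of \eqref{d5hiyzen}--\eqref{0qk7ej1b}.
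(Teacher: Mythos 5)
Your proposal is correct and follows essentially the same route as the paper: Proposition \ref{yg4qlzkl} is stated with a \qed precisely because its proof is the discussion preceding it in \S\ref{s0eqcql1}, namely the complex-symplectic reduction of $T^*G\times V$ by the free and proper $H$-action, the descent of the left $G$-action, and the identification of $\la^{-1}(0)/H$ with $G\times_H(\h^\circ\times V)$ via $(g,\xi,v)\mapsto(g,\xi+\Phi_V(v),v)$. You have simply filled in the routine verifications (regularity of $0$ as a value of $\la$, basicness and nondegeneracy of the reduced form, and descent of the moment map identity via injectivity of $\pi^*$) that the paper leaves implicit.
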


\begin{remark}
Dancer--Swann \cite{dan96} showed that $E$ is a
tri-Hamiltonian hyper\-k{\"a}hler manifold whose underlying
complex-Hamiltonian manifold is the one described above.
\end{remark}

\subsection{Holomorphic Darboux--Weinstein
Theorem}\label{1k5xw5mk}

The Darboux--Weinstein theorem \cite{wei71} is a standard
result in symplectic geometry which says that if two
symplectic forms $\omega_0$ and $\omega_1$ on a manifold $M$
agree on a submanifold $N\s M$, then we can find a
diffeomorphism $f$ on a neighbourhood of $N$ such that
$f^*\omega_1=\omega_0$. There is also an equivariant version
of the theorem, where if $\omega_0$, $\omega_1$ and $N$ are
invariant under the action of a compact Lie group, then $f$
can be taken to be equivariant. By the tubular neighbourhood
theorem, it suffices to prove the result when $M$ is a
vector bundle and $N$ the zero-section, and this is indeed
how Weinstein's original proof goes \cite{wei71}. In the
holomorphic setting, there is no tubular neighbourhood
theorem, but we can still adapt Weinstein's proof to
formulate a similar statement on holomorphic vector bundles:

\begin{theorem}\label{bkd2nozc}
Let $G$ be a group acting on a holomorphic vector bundle $E$
by bundle automorphisms \textup{(}not necessarily fixing the
base\textup{)}. Let $\omega_0$ and $\omega_1$ be two
$G$-invariant complex-symplectic forms on a $G$-invariant
neighbourhood $U$ of the zero-section $Z\s E$ such that
$\omega_0|_Z=\omega_1|_Z$. Then, there are $G$-invariant
neighbourhoods $U_0$ and $U_1$ of $Z$ in $U$ and a
$G$-equivariant biholomorphism $f:U_0\to U_1$ such that
$f^*\omega_1=\omega_0$ and $f|_Z=\mathrm{Id}_Z$.
\end{theorem}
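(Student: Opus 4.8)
The plan is to adapt Moser's deformation argument to the holomorphic category. Set $\beta\coloneqq\omega_1-\omega_0$ and $\omega_t\coloneqq\omega_0+t\beta$ for $t\in[0,1]$; each $\omega_t$ is a $G$-invariant closed holomorphic $2$-form, and since $\omega_0|_Z=\omega_1|_Z$ we have $\omega_t|_Z=\omega_0|_Z$, which is non-degenerate. I would seek a $G$-equivariant, $Z$-fixing, time-dependent flow $\phi_t$ with $\phi_t^*\omega_t=\omega_0$; then $f\coloneqq\phi_1$ is the desired biholomorphism. Differentiating and using Cartan's formula together with $d\omega_t=0$ reduces the problem to solving $X_t\intp\omega_t=-\sigma$ for a time-dependent vector field $X_t$, where $\sigma$ is a fixed holomorphic primitive of $\beta$ vanishing along $Z$.

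The key step, and the main obstacle, is producing such a $\sigma$: a $G$-invariant holomorphic $1$-form with $d\sigma=\beta$ and $\sigma|_Z=0$. Since there is no holomorphic tubular neighbourhood theorem, I would instead exploit the fibre-scaling maps $m_s\colon E\to E$, $m_s(v)=sv$, which are holomorphic bundle maps, equal the identity at $s=1$, degenerate to the projection onto $Z$ at $s=0$, and fix $Z$ pointwise. After shrinking $U$ to its fibrewise star-shaped interior (which is $G$-invariant because $G$ commutes with the scalings) we may assume $m_s(U)\s U$ for $s\in[0,1]$. Writing $R$ for the associated Euler vector field, a holomorphic vector field vanishing on $Z$, the homotopy identity $\tfrac{d}{ds}m_s^*\beta=\tfrac1s\,d(m_s^*(R\intp\beta))$ (using $d\beta=0$) suggests setting
$$\sigma\coloneqq\int_0^1 m_s^*(R\intp\beta)\,\frac{ds}{s}.$$
Because $R\intp\beta$ vanishes on $Z$ and $m_s$ scales the fibres by $s$, the integrand extends continuously across $s=0$, so the integral converges; integrating the homotopy identity and using $i^*\beta=0$ for the inclusion $i\colon Z\hookrightarrow E$ gives $d\sigma=\beta$. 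Holomorphicity is preserved since each $m_s^*(R\intp\beta)$ is holomorphic and integrating a holomorphic-form-valued family against the real parameter $s$ leaves the coefficients holomorphic. Crucially, $G$-equivariance is automatic and needs no averaging: bundle automorphisms are fibrewise linear, hence commute with the scalings $m_s$, so $R$ is $G$-invariant and therefore so is $\sigma$. This is exactly why Weinstein's homotopy-operator proof is preferable here to an averaging argument, which would be unavailable for a general (possibly non-compact) group $G$. Finally $\sigma|_Z=0$ because the integrand vanishes along $Z$.

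With $\sigma$ in hand I would define $X_t$ by $X_t\intp\omega_t=-\sigma$. The locus where all $\omega_t$, $t\in[0,1]$, are simultaneously non-degenerate is, by a tube-lemma argument applied to $\{z\}\times[0,1]$ at each $z\in Z$, a $G$-invariant open neighbourhood of $Z$ (it is $G$-invariant since every $\omega_t$ is). On it $X_t$ is a well-defined $G$-invariant holomorphic time-dependent vector field, and it vanishes on $Z$ because $\sigma|_Z=0$ while $\omega_t|_Z$ is non-degenerate. As $X_t$ vanishes along $Z$, its flow $\phi_t$ is defined for all $t\in[0,1]$ on a possibly smaller $G$-invariant neighbourhood $U_0$ of $Z$, fixes $Z$ pointwise, and—being the real-time flow of a holomorphic vector field and commuting with the $G$-action—is a $G$-equivariant biholomorphism onto its image $U_1$. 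The Moser computation
$$\tfrac{d}{dt}\phi_t^*\omega_t=\phi_t^*\big(\L_{X_t}\omega_t+\beta\big)=\phi_t^*\big(d(X_t\intp\omega_t)+\beta\big)=\phi_t^*(-d\sigma+\beta)=0$$
then gives $\phi_t^*\omega_t\equiv\omega_0$, so $f\coloneqq\phi_1\colon U_0\to U_1$ satisfies $f^*\omega_1=\omega_0$ and $f|_Z=\mathrm{Id}_Z$. The two points requiring the most care are the convergence and holomorphicity of the integral defining $\sigma$, and the uniform-in-$t$ control needed to integrate $X_t$ over all of $[0,1]$ when $Z$ is non-compact.
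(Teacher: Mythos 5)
Your proposal is correct and is essentially the paper's own proof: both run Weinstein's Moser-type argument with a primitive of $\omega_1-\omega_0$ built from the fibre-scaling maps of the bundle, vanishing along $Z$, followed by the time-dependent flow of the holomorphic vector field $X_t=\omega_t^{-1}(-\sigma)$, which fixes $Z$ and hence is defined up to time $1$ near $Z$. Your $\sigma=\int_0^1 m_s^*(R\intp\beta)\,\frac{ds}{s}$ coincides integrand-by-integrand (the $1/s$ singularity being removable, exactly as you note) with the paper's homotopy operator $I$, which the paper merely writes with a complex scaling parameter to make holomorphicity manifest.
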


\begin{remark}
Here $\omega_i|_Z$ is the restriction of $\omega_i$ to
$(\Lambda^2T^*E)|_Z$ (this is not the same as the pullback
to $Z$).
\end{remark}

The rest of this subsection is devoted to the proof of this
theorem. Let us first briefly sketch how we will proceed.
The first step is to get a ``Poincar\'e lemma'' for the
retraction of $U$ onto $Z$, i.e.\ to construct an explicit
homotopy operator $I:\Omega^k(U)\to\Omega^{k-1}(U)$ between
the identity map and $\pi^*$, where $\pi:U\to U,v\mto 0\cdot
v$. Then, $\alpha=I(\omega_0-\omega_1)$ is a $1$-form on $U$
and, for $t$ small enough, $\omega_t\coloneqq
\omega_0+t(\omega_1-\omega_0)$ is non-degenerate, so we get
a time-dependent holomorphic vector field
$X_t=\omega_t^{-1}(\alpha)$ on a neighbourhood of $Z$. The
proof concludes by showing that the time-dependent flow of
$X$ gives a biholomorphism with the desired properties.

Let us now construct the homotopy operator. Let
$\overline{\mathbb{D}}$ be the closed unit disc centred at
$0$ in $\C$ and let $U\s E$ be as in Theorem \ref{bkd2nozc}.
By shrinking $U$ if necessary, we may assume that it is
preserved by $\overline{\mathbb{D}}$, i.e.\ $zu\in U$ for
all $z\in\overline{\mathbb{D}}$ and $u\in U$. Let
\[
W \coloneqq \{(z,u)\in\C\times U:zu\in U\}.
\]
Then, $W$ is open in $\C\times U$ and
$\overline{\mathbb{D}}\times U\s W$. Let
\[
\lambda: W\too U,\quad (z,u)\mtoo zu,
\]
and for each $z\in\overline{\mathbb{D}}$ let
\[
\iota_z: U\too W,\quad u\mtoo(z, u).
\]
Let $\Omega^k(U)$ be the space of holomorphic $k$-forms on
$U$. Then, for all $\omega\in\Omega^k(U)$, we have a family
of holomorphic $(k-1)$-forms
$\iota_z^*i_{\partial_z}\lambda^*\omega \in \Omega^{k -
1}(U)$ depending holomorphically on $z \in
\bar{\mathbb{D}}$, where $i_{\partial_z}$ is the interior
product with the vector field $\partial_z \coloneqq
\frac{\partial}{\partial z}$ on $W \s \C \times E$. Hence,
we have a linear operator
\[
I : \Omega^{k}(U) \too \Omega^{k - 1}(U), \quad
I\omega\coloneqq
\int_0^1(\iota_z^*i_{\partial_z}\lambda^*\omega)dz.
\]
Let $\pi:U\to U$ be the projection onto the zero-section.

\begin{proposition}\label{xpp5npqz}
We have $d(I\omega) + I(d\omega) = \omega - \pi^*\omega$ for
all $\omega \in \Omega^k(U)$, i.e.\ $I$ is a homotopy
operator between the identity map and $\pi^*$.
\end{proposition}

\begin{proof}
We have
\begin{align*}
d(I\omega)+I(d\omega) &= \int_0^1
\iota_z^*(di_{\partial_z}\lambda^*\omega + i_{\partial_z}
d\lambda^*\omega)dz = \int_0^1
(\iota_z^*\mathcal{L}_{\partial_z} \lambda^*\omega)dz.
\end{align*}
Now, the flow $\theta_t$ of $\partial_z$ is $\theta_t(z,u) =
(z+t,u) = \iota_{z+t}(u)$, so $\theta_t\circ \iota_0 =
\iota_t$. Hence
$\iota_t^*\mathcal{L}_{\partial_z}\lambda^*\omega =
\iota_0^*\theta_t^*\mathcal{L}_{\partial_z}\lambda^*\omega
= \iota_0^*\frac{d}{dt}\theta_t^*\lambda^*\omega = \frac{d}{dt}
\iota_0^*\theta_t^*\lambda^*\omega$, so we get
\begin{align*}
d(I\omega)+I(d\omega) &= \int_0^1
\frac{d}{dt}\iota_0^*\theta_t^*\lambda^*\omega\,dt
= \iota_0^*\theta_1^*\lambda^*\omega -
\iota_0^*\theta_0^*\lambda^*\omega = \omega-\pi^*\omega.\qedhere
\end{align*}
\end{proof}

The following observation will be useful.

\begin{lemma}\label{97cy9g5h}
Let $\omega\in\Omega^k(U)$ and let $p\in Z$. If $\omega_p=0$
then $(I\omega)_p=0$.
\end{lemma}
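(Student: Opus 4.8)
The plan is to unwind the definition of the homotopy operator $I$ and show that the integrand vanishes at $p$ pointwise. Recall that
$$(I\omega)_p = \int_0^1\bigl(i_z^*(\xi\intp\lambda^*\omega)\bigr)_p\,dz,$$
so it suffices to prove that the $(k-1)$-form $i_z^*(\xi\intp\lambda^*\omega)$ vanishes at $p$ for every $z\in[0,1]$. Fix such a $z$ and note that $i_z(p)=(z,p)\in W$, while $\lambda(i_z(p))=\lambda(z,p)=zp$. Since $p\in Z$ is a point on the zero section, we have $zp=p$ for all $z$, so $\lambda\circ i_z$ maps $p$ to $p$ itself; more usefully, $\lambda(z,p)=p$ whenever $p\in Z$, independently of $z$.

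The key observation is that $(\xi\intp\lambda^*\omega)_{(z,p)}$ is computed by feeding $\xi_{(z,p)}$ together with $(k-1)$ tangent vectors into $(\lambda^*\omega)_{(z,p)}$, and $(\lambda^*\omega)_{(z,p)}(w_1,\dots,w_k)=\omega_{\lambda(z,p)}\bigl(d\lambda_{(z,p)}w_1,\dots,d\lambda_{(z,p)}w_k\bigr)$. Because $\lambda(z,p)=p$ for $p\in Z$ and we are assuming $\omega_p=0$, the form $\omega_{\lambda(z,p)}=\omega_p$ is the zero $k$-covector at $p$. Hence $(\lambda^*\omega)_{(z,p)}=0$ as an element of $\Lambda^kT^*_{(z,p)}W$, and therefore its contraction with $\xi$ is likewise zero. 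Pulling back by $i_z$ preserves this: $\bigl(i_z^*(\xi\intp\lambda^*\omega)\bigr)_p = (i_z)^*\bigl((\xi\intp\lambda^*\omega)_{(z,p)}\bigr)=0$.

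Integrating the identically-zero integrand over $z\in[0,1]$ gives $(I\omega)_p=0$, as claimed. I expect no genuine obstacle here; the only point requiring a moment's care is the chain of pullback identities, namely verifying that the vanishing of the $k$-covector $\omega$ at the base point $\lambda(z,p)=p$ forces the vanishing of the pulled-back contracted form at $p$, which is immediate from functoriality of pullback and the fact that $\lambda$ sends $(z,p)$ to $p$ for every $z$ when $p$ lies on the zero section.
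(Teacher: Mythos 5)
Your proposal is correct and is essentially the paper's own argument: both hinge on the observation that $\lambda(z,p)=zp=p$ for $p$ in the zero section, so the hypothesis $\omega_p=0$ forces $(\lambda^*\omega)_{(z,p)}=0$, and hence the integrand of $I\omega$ vanishes at $p$ for every $z$. The paper merely writes this as a chain of evaluations on tangent vectors rather than invoking functoriality of pullback, which is a cosmetic difference.
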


\begin{proof}
For all $v \in (T_pU)^{k - 1}$, we have
$(\iota_z^*i_{\partial_z}\lambda^*\omega)_p(v) =
\omega_{zp}(d\lambda(\partial_z),d\lambda(d\iota_z(v))) = 0$
since $zp=p$ as $p \in Z$. Thus,
$(\iota_z^*i_{\partial_z}\lambda^*\omega)_p=0$ for all $z$
and hence $(I\omega)_p=0$.  \end{proof}

\begin{proof}[Proof of Theorem \ref{bkd2nozc}]
Let $\eta = \omega_1 - \omega_0$ and let $\alpha=-I\eta \in
\Omega^1(U)$.  Then, $\eta=-d\alpha$ by Proposition
\ref{xpp5npqz}. Since $\eta$ is $G$-invariant, it follows
from the definition of $I$ that $\alpha$ is also
$G$-invariant. Moreover, since $\eta|_Z=0$ we have
$\alpha|_Z=0$ by Lemma \ref{97cy9g5h}.

For each $z\in \C$, define a $G$-invariant holomorphic
$2$-form on $U$ by $\omega_z=\omega_0+z\eta$. We have
$\omega_z|_Z=\omega_0|_Z$, so in particular, $\omega_z|_p$
is non-degenerate for all $(z,p)\in \C\times Z$. Let
$\mathbb{D}_r$ be the open disc of radius $r$ centred at $0$
in $\C$. By compactness of $\overline{\mathbb{D}}_2$, we can
find a neighbourhood $U'\s U$ of $Z$ such that $\omega_z|_p$
is non-degenerate for all $(z,p)\in
\overline{\mathbb{D}}_2\times U'$. Moreover, by
$G$-invariance of $\omega_z$, we can take $U'$ to be
$G$-invariant. Thus, we may assume that $\omega_z|_p$ is
non-degenerate for all
$(z,p)\in\overline{\mathbb{D}}_2\times U$. In particular,
the maps
\[
\hat{\omega}_z:TU\too T^*U,\quad v\mtoo \omega_z(v,\cdot)
\]
are vector bundle isomorphisms for all $z\in
\overline{\mathbb{D}}_2$. Define a holomorphic family of
vectors fields on $U$ by
\[
X: \mathbb{D}_2\times U\too TU,\quad (z,p)\mtoo
(\hat{\omega}_z)^{-1}(\alpha_p).
\]
Let $J=\mathbb{D}_2\cap\R=(-2,2)$ and let
$\psi:\mathcal{E}\to U$ be the smooth time-dependent flow of
the restriction $X|_{J\times U}$. That is, $\mathcal{E}$ is
the open subset of $J\times J\times M$ such that for all
$(t_0,p)\in J\times M$, the map $\psi^{(t_0,p)}(t)\coloneqq
\psi(t,t_0,p)$ is the maximally extended integral curve of
$X|_{J\times U}$ starting at $(t_0,p)$. From the general
theory of smooth time-dependent flows (see e.g.\
\cite[Theorem 9.48]{lee13}), for all $(t_1,t_0)\in J\times
J$ the set
\[
U_{(t_1,t_0)}\coloneqq \{p\in U:(t_1,t_0,p)\in\mathcal{E}\}
\]
is open, and the map
\[
\psi_{(t_1,t_0)}:U_{(t_1,t_0)}\too U_{(t_0,t_1)},\quad
p\mtoo \psi(t_1,t_0,p)
\]
is a diffeomorphism. Moreover, since $X$ is holomorphic,
$\psi_{(t_1,t_0)}$ is a biholomorphism (this follows from
the holomorphic dependence of solutions to linear systems of
ODEs on the initial conditions; see e.g.\ \cite[Ch.\ 1,
\S8]{cod55}). Since $\alpha|_Z=0$ we have $X_{(t_0,p)}=0$
for all $(t_0,p)\in J\times Z$, and hence
$\psi(t_1,t_0,p)=p$ for all $(t_1,t_0,p)\in J\times J\times
Z$. In particular, $J\times J\times Z\s \mathcal{E}$, so
$U_{(1,0)}$ and $U_{(0,1)}$ contain $Z$. We claim that the
biholomorphism $\psi_{1,0}:U_{1,0}\to U_{0,1}$ is the one we
need. First, since $\alpha$ and $\omega_z$ are
$G$-invariant, so is $X$. Hence, $U_{1,0}$ and $U_{0,1}$ are
$G$-invariant, and $\psi_{1,0}$ is $G$-equivariant.
Moreover, from \cite[Proposition 22.15]{lee13} we have for
all $t_1\in J$, 
\begin{align*}
\frac{d}{dt}\Big|_{t=t_1}\psi^*_{t,0}\omega_t &=
\psi_{t_1,0}^*\left(\mathcal{L}_{X_{t_1}}\omega_{t_1} +
\frac{d}{dt}\Big|_{t=t_1}\omega_t\right) =
\psi_{t_1,0}^*\left(i_{X_{t_1}} d\omega_t + di_{X_{t_1}}
\omega_{t_1}+\eta\right) \\
&= \psi_{t_1,0}^*(d\alpha+\eta) = 0.
\end{align*}
Thus, $\psi_{1,0}^*\omega_1=\psi_{0,0}^*\omega_0=\omega_0$. 
\end{proof}

\subsection{Linearisation of the Holomorphic Slice
Theorem}\label{j9bkeg0y}

In this subsection, we explain how to put the Holomorphic
Slice Theorem \ref{tos2xb00} in a form which will be more
convenient for our purpose. First, we want to linearise the
slice and realise neighbourhoods of orbits in $M$ as
neighbourhoods of zero-sections of vector bundles.

\begin{proposition}\label{3bo1a7fn}
Let $(M,K,\mu)$ be an integrable Hamiltonian K\"ahler
manifold, let $p\in M\aps{\mu}$, let $G\coloneqq K_\C$, let
$H\coloneqq G_p$, and let $W\coloneqq T_pM/T_p(G\cdot p)$.
Then, there is an open ball $B$ centred at $0$ in $W$, a
$G$-invariant neighbourhood $U$ of $p$ in $M$, and a
$G$-equivariant biholomorphism $G\times_H(H\cdot B)\too U$
mapping $[1,0]$ to $p$.
\end{proposition}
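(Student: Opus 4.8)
The plan is to combine the Holomorphic Slice Theorem (Theorem \ref{tos2xb00}) with an $H$-equivariant holomorphic linearisation of the isotropy action on the slice. First I would apply Theorem \ref{tos2xb00} to obtain a slice $S$ at $p$: an $H$-invariant complex submanifold $S$ containing $p$ with $G\cdot S$ open in $M$ and with $\Theta\colon G\times_H S\too G\cdot S$, $[g,x]\mto g\cdot x$, a $G$-equivariant biholomorphism. Here $H=G_p=(K_p)_\C$ is complex reductive by Theorem \ref{w07wd84e}. Differentiating $\Theta$ at $[1,p]$, where the isotropy directions $\h$ act trivially since $p$ is fixed, shows that $T_pM=T_p(G\cdot p)\oplus T_pS$ and that the resulting map $T_pS\to T_pM/T_p(G\cdot p)=W$ is an $H$-equivariant linear isomorphism; I henceforth identify $T_pS$ with $W$. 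The problem thereby reduces to producing, for a suitable open ball $B\subseteq W$ around $0$, an $H$-equivariant biholomorphism $\psi$ from $H\cdot B$ onto an $H$-invariant open neighbourhood $S'$ of $p$ in $S$: for then the map $G\times_H(H\cdot B)\too M$, $[g,w]\mto g\cdot\psi(w)=\Theta[g,\psi(w)]$, is a $G$-equivariant biholomorphism onto the open set $U\coloneqq G\cdot S'$, sending $[1,0]$ to $p$.

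To linearise the $H$-action on $S$ near the fixed point $p$, I would first treat the maximal compact subgroup $K_p\subseteq H$. Since $K_p$ acts holomorphically on $S$ fixing $p$, its linear isotropy representation on $T_pS=W$ is $\C$-linear, so averaging an arbitrary holomorphic chart $z\colon S_0\to W$ (with $z(p)=0$, $dz_p=\mathrm{id}$) over $K_p$ via $\bar z(x)=\int_{K_p}g\cdot z(g^{-1}\cdot x)\,dg$ yields a $K_p$-equivariant holomorphic map with $\bar z(p)=0$ and $d\bar z_p=\mathrm{id}_W$. After shrinking to a $K_p$-invariant neighbourhood, $\bar z$ is a biholomorphism, and $\psi\coloneqq\bar z^{-1}$ is a $K_p$-equivariant biholomorphism from a $K_p$-invariant neighbourhood $W_0$ of $0$ in $W$ onto a $K_p$-invariant neighbourhood of $p$ in $S$, with $d\psi_0=\mathrm{id}_W$.

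The next step is to promote $\psi$ from $K_p$-equivariance to $H$-equivariance, using that $H=(K_p)_\C$, so that $K_p$ is a maximal totally real submanifold of $H$ with $\Lie H=\Lie K_p\oplus i\,\Lie K_p$. For fixed $w\in W_0$, the two holomorphic maps $h\mto\psi(h\cdot w)$ and $h\mto h\cdot\psi(w)$, defined for $h$ near the identity, agree on $K_p$; since a holomorphic map agreeing on a maximal totally real submanifold agrees on a neighbourhood, the identity theorem forces them to coincide near $e$, and hence wherever both are defined (the identity component follows by connectedness, and every component of $H$ meets $K_p$ because $H/K_p$ is contractible). Thus $\psi$ is $H$-equivariant on its domain. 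I may then extend $\psi$ to all of $H\cdot B$, for any ball $B\subseteq W_0$, by setting $\psi(h\cdot b)\coloneqq h\cdot\psi(b)$; the equivariance just established makes this well defined whenever $h\cdot b\in B$, and the extension is holomorphic because $H$ acts holomorphically. Note that $H\cdot B$ is open and $H$-invariant in $W$ since $H$ acts linearly.

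The hard part will be verifying that this extended $\psi\colon H\cdot B\to S$ is an $H$-equivariant biholomorphism onto an open subset, rather than merely a holomorphic equivariant map. Because $d\psi_0=\mathrm{id}$ and $\psi$ is equivariant, the chain rule shows $d\psi_{h\cdot b}$ is an isomorphism at every point, so $\psi$ is a local biholomorphism on all of $H\cdot B$; the genuine difficulty is \emph{global} injectivity on the possibly unbounded saturated set $H\cdot B$, since Bochner's construction only supplies a $K_p$-invariant (not $H$-invariant) domain while $H$ is noncompact. I would secure this by shrinking $B$ and exploiting reductivity of $H$: if $\psi(b_1)=h\cdot\psi(b_2)$ with $b_1,b_2\in B$, one pushes the identity down through $\Theta$ and the analytic Hilbert quotient structure of the slice to force $h\in H_{\psi(b_2)}$ and $b_1=b_2$, using that the fibres of the quotient contain a unique closed orbit. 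Once injectivity is established, $\psi$ is an $H$-equivariant open embedding with image $S'\coloneqq\psi(H\cdot B)$, and assembling $[g,w]\mto\Theta[g,\psi(w)]$ completes the proof, with $U=G\cdot S'$ open in $M$ by Proposition \ref{wfgxy42b}(i).
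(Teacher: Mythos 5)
Your strategy---the Holomorphic Slice Theorem followed by a linearisation of the $H$-action on the slice---is precisely the alternative route the paper itself indicates (its own ``proof'' is a citation to Sjamaar \cite{sja95}, top of p.~101, resp.\ Theorem 1.21 there), and the first half of your argument is sound: the splitting $T_pM=T_p(G\cdot p)\oplus T_pS$, the $H$-equivariant identification $T_pS\cong W$, and the Bochner averaging producing a $K_p$-equivariant chart $\psi$ with $d\psi_0=\mathrm{id}$ are all correct. The problem is that the entire analytic content of the cited linearisation theorem sits in the two steps you pass over, and both have genuine gaps. The first is the extension step. The identity theorem does \emph{not} give equivariance ``wherever both are defined'': for fixed $w$, the maps $h\mapsto\psi(h\cdot w)$ and $h\mapsto h\cdot\psi(w)$ live on the open set $A_w=\{h\in H:h\cdot w\in W_0\}$, and analytic continuation from $K_p$ yields agreement only on those connected components of $A_w$ that meet $K_p$; the observation that every component of $H$ meets $K_p$ is beside the point, because $A_w\neq H$. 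Consequently the well-definedness of $\psi(h\cdot b)\coloneqq h\cdot\psi(b)$ is unproven: if $h\cdot b_1=b_2$ with $b_1,b_2\in B$, nothing you have established connects $h$ to $K_p$ \emph{inside} $A_{b_1}$. This particular gap is repairable, but it needs an extra idea you do not supply: take $B$ to be a ball for a $K_p$-invariant Hermitian norm, write $h=k\exp(iX)$ with $k\in K_p$ and $X\in\Lie(K_p)$ (Cartan decomposition), and use Kempf--Ness convexity of $t\mapsto\|\exp(itX)\cdot b_1\|^2$ to see that the path $\exp(itX)\cdot b_1$ stays in $B$, so that $\exp(iX)$ does lie in the component of $A_{b_1}$ containing the identity.

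The second, more serious, gap is injectivity, which you yourself flag as the hard part but then resolve by a gesture that does not work. From $\psi(b_1)=h\cdot\psi(b_2)$, passing to the analytic Hilbert quotient of the slice tells you only that $\overline{H\cdot\psi(b_1)}$ and $\overline{H\cdot\psi(b_2)}$ intersect; since neither orbit is known to be closed, the ``unique closed orbit in each fibre'' gives nothing, and it certainly does not ``force $h\in H_{\psi(b_2)}$ and $b_1=b_2$.'' Moreover, even that claimed conclusion would not finish the proof: to conclude $h_1\cdot b_1=h_2\cdot b_2$ from $h\in H_{\psi(b_2)}$ and $b_1=b_2$ you still need $h\in H_{b_2}$, i.e.\ the equality of stabilizers $H_{\psi(b_2)}=H_{b_2}$, which is itself a consequence of the linearisation you are trying to prove---the reasoning is circular. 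Closing these holes is exactly what the results of Snow \cite{sno82} and Sjamaar \cite{sja95} cited by the paper accomplish, and it requires real input: either Stein theory (local finiteness of the $H$-representation on $\O(S)$, producing a \emph{globally} $H$-equivariant map $S\to W$, followed by a Luna-type argument that an equivariant map which is biholomorphic at a point with closed orbit restricts to a biholomorphism between saturated neighbourhoods), or moment-map/Kempf--Ness machinery on the slice. As it stands, your proposal reproves the easy part of the citation and leaves the hard part where it was.
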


\begin{proof}
This is an intermediate step in Sjamaar's proof of the
Holomorphic Slice Theorem: see the top of p.\ 101 in
\cite{sja95}. It can also be proved by linearising the
action of $G_p$ on the slice $S$ at $p$ \cite[Theorem
1.21]{sja95}.
\end{proof}

It will be important to know that the open set $U$ of the
preceding proposition can be taken to be $G$-saturated.
First, we have:

\begin{proposition}\label{6yupgb8n}
Let $(M,K,\mu)$ be an integrable Hamiltonian K\"ahler
manifold and let $p\in M\aps{\mu}$. Then, every
$G$-invariant neighbourhood of $p$ contains a neighbourhood
of $p$ which is $G$-saturated in $M\ass{\mu}$.
\end{proposition}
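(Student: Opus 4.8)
The plan is to produce the $G$-saturated neighbourhood as a $\pi$-preimage, where $\pi\colon M\ass{\mu}\to Q\coloneqq M\ass{\mu}\sll{}G$ is the analytic Hilbert quotient, and to control it using the compact-group quotient sitting inside. First I would replace $N$ by its interior, which is again $G$-invariant because $G$ acts by homeomorphisms, so I may assume $N$ is open. By the Heinzner--Loose theorem (Theorem \ref{w07wd84e}) the inclusion $\mu^{-1}(0)\hookrightarrow M\ass{\mu}$ descends to a homeomorphism $\mu^{-1}(0)/K=M\sll{\mu}K\cong Q$; thus $Q$ carries the quotient topology coming from the action of the compact group $K$ on $\mu^{-1}(0)$, and consequently the restriction $q\coloneqq\pi|_{\mu^{-1}(0)}$ sends closed $K$-invariant subsets of $\mu^{-1}(0)$ to closed subsets of $Q$ (a closed $K$-invariant set is saturated for the $K$-quotient, so its image is closed). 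This observation --- working with the honest quotient by $K$ rather than with the non-proper map $\pi$ --- is what makes the construction go through.

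Next I would set $C\coloneqq\mu^{-1}(0)\setminus N$, which is closed in $\mu^{-1}(0)$ and $K$-invariant (as $N$ is $G$-invariant), so $q(C)$ is closed in $Q$ and $O\coloneqq Q\setminus q(C)$ is open. To check $\pi(p)\in O$, suppose some $x_0\in C$ had $\pi(x_0)=\pi(p)$. Since $x_0\in\mu^{-1}(0)$, the equivalence \eqref{ihb5mnx9} gives $x_0\in M\aps{\mu}$, so $G\cdot x_0$ is closed; but $G\cdot p$ is the unique closed orbit in the fibre over $\pi(p)$, forcing $G\cdot x_0=G\cdot p\s N$ and contradicting $x_0\notin N$. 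Hence $U\coloneqq\pi^{-1}(O)$ is an open neighbourhood of $p$, and it is $G$-saturated: being a $\pi$-preimage it is saturated with respect to $\pi$, so Proposition \ref{wfgxy42b}(i) identifies it as $G$-saturated.

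The heart of the matter is the inclusion $U\s N$, and this is the step I expect to require the most care. Given $x\in\pi^{-1}(O)$, the fibre $\pi^{-1}(\pi(x))$ contains a unique closed orbit, which by \eqref{ihb5mnx9} meets $\mu^{-1}(0)$ at some point $x_0$. From $\pi(x_0)=\pi(x)\in O$ I get $x_0\notin C$, hence $x_0\in\mu^{-1}(0)\cap N$ and $G\cdot x_0\s N$. Since $G\cdot x_0$ is the closed orbit of the fibre it lies in $\overline{G\cdot x}$ (the closure is $G$-invariant and meets the orbit $G\cdot x_0$), so $\overline{G\cdot x}\cap N\ne\emptyset$; as $N$ is open and $G\cdot x$ is dense in $\overline{G\cdot x}$, the orbit $G\cdot x$ itself meets $N$, whence $x\in N$ by $G$-invariance. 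This gives $U\s N$ and finishes the proof. The only real obstacle is the closedness of $q(C)$, and the key idea is to obtain it from the compact group $K$ rather than from $\pi$ directly; once that is in place, the argument is a routine play with orbit closures.
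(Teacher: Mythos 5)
Your proof is correct, and while it shares the paper's overall skeleton---produce the saturated neighbourhood as $\pi^{-1}\bigl(Q\setminus\pi(C)\bigr)$ for a suitable closed set $C$---the key technical input is genuinely different. The paper takes $C=M\ass{\mu}\setminus N$, the complement inside the whole semistable locus, and invokes the fact (cited from Remark 1.1 of \cite{hei98}) that $\pi$ maps $G$-invariant closed subsets of $M\ass{\mu}$ to closed subsets of the quotient; with that in hand, $\pi(p)\notin\pi(C)$ follows from closedness of $G\cdot p$ together with $\overline{G\cdot c}\s C$ for $c\in C$, and the containment $\pi^{-1}\bigl(Q\setminus\pi(C)\bigr)\s N$ is a tautology. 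You instead take $C=\mu^{-1}(0)\setminus N$, the complement inside the zero level set, and obtain closedness of its image for free from compactness of $K$ through the Heinzner--Loose homeomorphism $\mu^{-1}(0)/K\cong M\ass{\mu}\sll{}G$; as a result you use only facts already stated in the paper (Theorem \ref{w07wd84e}, the equivalence \eqref{ihb5mnx9}, the unique closed orbit in each fibre, Proposition \ref{wfgxy42b}(i)) and avoid the extra citation. The price is that the containment $U\s N$ is no longer automatic: since your $C$ only records what happens on the zero level set, you must propagate membership in $N$ from the closed orbit $G\cdot x_0\s N$ of the fibre up to the ambient point $x$, via $G\cdot x_0\s\overline{G\cdot x}$ and openness of $N$; that step, and the preliminary reduction to $N$ open, are carried out correctly. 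The trade-off in generality is worth noting: the paper's argument works verbatim for any analytic Hilbert quotient, whereas yours genuinely uses the moment-map geometry through the homeomorphism with $\mu^{-1}(0)/K$---which is available here, since the proposition is only ever applied in that setting.
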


\begin{proof}
Our argument is similar to \cite[Remark 14.24]{hei07}. As
observed in \cite[Remark 1.1]{hei98}, the quotient map
$\pi:M\ass{\mu}\to M\ass{\mu}\sll{}G$ sends
$G$-{\hspace{0pt}}invariant \allowbreak closed subsets to
closed subsets. Let $U$ be a $G$-invariant neighbourhood of
$p$ in $M\ass\mu$. Then, $C\coloneqq M\ass{\mu}-U$ is a
$G$-invariant closed subset of $M\ass{\mu}$, so $\pi(C)$ is
closed in $M\ass{\mu}\sll{}G$. Moreover, since $G\cdot p$ is
closed in $M\ass{\mu}$, we have $\pi(p)\notin \pi(C)$.
Hence, $\pi^{-1}(M\ass{\mu}\sll{}G-\pi(C))$ is a
$G$-saturated neighbourhood of $p$ contained in $U$. 
\end{proof}

The set $H\cdot B$ in Proposition \ref{3bo1a7fn} is also
$H$-saturated \cite[Corollary 4.9]{sno82} and it follows
that $G\times_H(H\cdot B)$ is $G$-saturated in $G\times_HW$.
We can then restate the Holomorphic Slice Theorem in the
following form:

\begin{theorem}\label{8w30qpuz}
Let $(M,K,\mu)$ be an integrable Hamiltonian K\"ahler manifold. Let $p\in M\aps{\mu}$, let $G\coloneqq K_\C$, let $H\coloneqq G_p$, and let $W\coloneqq T_pM/T_p(G\cdot p)$. Then, there is a $G$-saturated neighbourhood $U$ of $p$ in $M\ass\mu$, a $G$-saturated neighbourhood $U'$ of the zero-section of the vector bundle $G\times_HW$, and a $G$-equivariant biholomorphism $U'\to U$ mapping $[1,0]$ to $p$.
\end{theorem}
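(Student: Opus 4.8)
The plan is to assemble the three facts established immediately above the statement: the linearised slice of Proposition \ref{3bo1a7fn}, the existence of $G$-saturated neighbourhoods from Proposition \ref{6yupgb8n}, and the observation that $G\times_H(H\cdot B)$ is $G$-saturated in $G\times_H W$. The biholomorphism of Proposition \ref{3bo1a7fn} is already $G$-equivariant and sends $[1,0]$ to $p$, so the only genuine work is to shrink it so that both its source and its target become $G$-saturated in the respective ambient spaces $G\times_H W$ and $M\ass\mu$.

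First I would apply Proposition \ref{3bo1a7fn}, but to the open $G$-invariant submanifold $M\ass\mu$ rather than to $M$ itself. Note that $(M\ass\mu, K, \mu|_{M\ass\mu})$ is again an integrable Hamiltonian K\"ahler manifold, that $p\in M\aps\mu\s M\ass\mu$, and that the semistable set of this restricted structure is all of $M\ass\mu$ (indeed, for $q\in M\ass\mu$ the closure of $G\cdot q$ in $M\ass\mu$ equals its closure in $M$ intersected with $M\ass\mu$, which already meets $\mu^{-1}(0)\s M\ass\mu$); hence $p$ is polystable for the restricted data. This yields an open ball $B\s W$, a neighbourhood $U_1$ of $p$ contained in $M\ass\mu$, and a $G$-equivariant biholomorphism $\varphi\colon G\times_H(H\cdot B)\to U_1$ with $\varphi([1,0])=p$. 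Applying Proposition \ref{6yupgb8n} to the $G$-invariant neighbourhood $U_1$ then produces a neighbourhood $U$ of $p$ that is open and $G$-saturated in $M\ass\mu$ and satisfies $U\s U_1$. I would set $U'\coloneqq\varphi^{-1}(U)$; since $\varphi$ is a $G$-equivariant homeomorphism and $U$ is $G$-invariant, $U'$ is a $G$-invariant open set containing $[1,0]$, and therefore contains the whole orbit $G\cdot[1,0]$, which is the zero section of $G\times_H W$. The restriction $\varphi|_{U'}\colon U'\to U$ is the desired $G$-equivariant biholomorphism sending $[1,0]$ to $p$.

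It then remains to verify the two saturation claims. The set $U$ is $G$-saturated in $M\ass\mu$ by construction. For $U'$ I would record the elementary transitivity principle: if $A$ is $G$-saturated in $B$ and $B$ is $G$-saturated in $C$, then $A$ is $G$-saturated in $C$ (for $a\in A$, saturation of $B$ in $C$ forces the closure of $G\cdot a$ in $C$ to lie in $B$, where it coincides with its closure in $B$, which in turn lies in $A$). Since $G\times_H(H\cdot B)$ is $G$-saturated in $G\times_H W$, it suffices to show that $U'$ is $G$-saturated in $G\times_H(H\cdot B)$, and by $G$-equivariance of $\varphi$ this is equivalent to $U$ being $G$-saturated in $U_1$. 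Because $U_1\s M\ass\mu$, the closure of an orbit $G\cdot x$ (with $x\in U$) taken in $U_1$ is contained in its closure taken in $M\ass\mu$, which lies in $U$ by saturation of $U$ in $M\ass\mu$; hence $U$ is $G$-saturated in $U_1$, and the transitivity principle completes the argument.

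The main obstacle is exactly this bookkeeping with saturation in nested spaces: the biholomorphism $\varphi$ identifies $G\times_H(H\cdot B)$ with $U_1$ but does \emph{not} identify the ambient model $G\times_H W$ with $M$, so $G$-saturation cannot simply be transported across $\varphi$ and must instead be propagated through the two chains $U'\s G\times_H(H\cdot B)\s G\times_H W$ and $U\s U_1\s M\ass\mu$. Carrying out the slice construction inside $M\ass\mu$ from the outset, so that $U_1\s M\ass\mu$, is what makes the final comparison of closures clean and lets the transitivity principle close the proof.
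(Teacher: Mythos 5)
Your proof is correct and assembles exactly the same ingredients as the paper's: Proposition \ref{3bo1a7fn}, Proposition \ref{6yupgb8n}, and the $G$-saturation of $G\times_H(H\cdot B)$ in $G\times_HW$, glued together by comparing orbit closures in the nested spaces. The only difference is organizational: the paper applies Proposition \ref{3bo1a7fn} in $M$ and then shrinks the ball, setting $U''=\varphi(G\times_H(H\cdot B'))$ inside the saturated neighbourhood so that the model-side open set retains the ball form $G\times_H(H\cdot B')$ (the form later exploited in Remark \ref{c9lhmxfq} and Proposition \ref{u5f7q715}), whereas you run the slice inside $M\ass{\mu}$ and pull the saturated set back via $\varphi^{-1}$ --- this proves the statement as written (the ball form can be recovered by one further shrinking) and has the virtue of making explicit the closure bookkeeping that the paper's final sentence leaves implicit.
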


\begin{proof}
Let $\varphi:G\times_H(H\cdot B)\to U$ be the biholomorphism
of Proposition \ref{3bo1a7fn}. By Proposition
\ref{6yupgb8n}, there is a $G$-saturated neighbourhood $U'$
of $p$ contained in $U$. Let $B'\s B$ be an open ball
sufficiently small so that $U''\coloneqq
\varphi(G\times_H(H\cdot B'))\s U'$. Then, $U''$ is
$G$-saturated.
\end{proof}

\subsection{Proof of the Complex-Hamiltonian Local Normal
Form}\label{5ench1hh}

We now complete the proof of Theorem \ref{vhb1qe4r}. The
first step is to have an explicit expression for the
complex-symplectic form $\eta_\C$ of the local model
$E=G\times_H(\h^\circ\times V)$ at the point $q=[1,0,0]$.
Note that $G_q=H$, so $H$ acts linearly on $T_qE$. Since the
$G$-action is Hamiltonian, this is a complex-symplectic
representation of $H$ on $T_qE$. Recall that $\m\s\g$ is the
orthogonal complement to $\h$.

\begin{proposition}\label{lopjs6qu}
We have $T_qE \cong \m \times \m^*\times V$ as
complex-symplectic $H$-\hspace{0pt}representations, where
$\m\times\m^*$ has the canonical symplectic form
$((x,\varphi), (y,\psi)) \mto \psi(x) - \varphi(y)$.
Moreover, $T_q(G\cdot q)\cong\m\times 0\times 0$ under this
isomorphism.
\end{proposition}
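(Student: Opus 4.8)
The plan is to read off $T_qE$, together with its induced complex-symplectic form, directly from the realization of $E$ as the complex-symplectic reduction $\la^{-1}(0)/H$ of $T^*G\times V=G\times\g^*\times V$ by $H$, where $\la(g,\xi,v)=\Phi_V(v)-\xi|_\h$ and $q$ is the class of the point $z=(1,0,0)\in\la^{-1}(0)$. First I would compute $T_z\la^{-1}(0)$. Since $\Phi_V$ is homogeneous quadratic, $d(\Phi_V)_0=0$, so $d\la_z(X,\phi,v)=-\phi|_\h$ and hence $T_z\la^{-1}(0)=\g\oplus\h^\circ\oplus V$. The tangent space to the $H$-orbit through $z$ is obtained by differentiating $h\cdot z$; the generator of $X\in\h$ is $(-X,0,0)$, so this tangent space is $\h\oplus 0\oplus 0$. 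Using the $H$-invariant splitting $\g=\h\oplus\m$ from the construction of $E$, the quotient gives $T_qE\cong\m\oplus\h^\circ\oplus V$, and $\h^\circ\cong\m^*$ $H$-equivariantly. Differentiating the $G$-action \eqref{2dw7d96v} at $q$, the generator of $Y\in\g$ is the class of $(Y,0,0)$, which projects to $Y\bmod\h\in\m$; hence $T_q(G\cdot q)=\m\times 0\times 0$ under this identification.

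Next I would identify the induced form. By definition of symplectic reduction the form $\eta_\C$ on $T_qE$ satisfies $\eta_\C(\bar u_1,\bar u_2)=\Omega(u_1,u_2)$ for representatives $u_i\in T_z\la^{-1}(0)$, where $\Omega=\omega_{T^*G}+\omega_\C$ is the ambient complex-symplectic form; this descends because $\la$ is the $H$-moment map. The only computation needed is the value of the left-trivialized form $\omega_{T^*G}=-d\alpha$ on $T^*G=G\times\g^*$ at $(1,0)$: because the base point has $\xi=0$, the $\mathrm{ad}^*$-term in $\omega_{T^*G}$ drops out and the form reduces to the standard pairing $((X_1,\phi_1),(X_2,\phi_2))\mapsto\phi_2(X_1)-\phi_1(X_2)$ on $\g\oplus\g^*$. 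Evaluating $\Omega$ on representatives $(X_i,\phi_i,v_i)\in\m\oplus\h^\circ\oplus V$ and using $\phi_i(X_j)=(\phi_i|_\m)(X_j)$ for $X_j\in\m$ yields $\phi_2(X_1)-\phi_1(X_2)+\omega_\C(v_1,v_2)$, which is precisely the canonical pairing $\psi(X)-\varphi(Y)$ on $\m\times\m^*$ plus $\omega_\C$ on $V$, as claimed. Equivariance is automatic, since every identification used ($\g=\h\oplus\m$, $\h^\circ\cong\m^*$, and the reduction) is $H$-equivariant.

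I expect the only real care to be in the symplectic computation: fixing the left-trivialized form on $T^*G$ with the correct signs (compatible with the moment map \eqref{d5hiyzen}) and verifying that evaluation at $\xi=0$ annihilates the bracket term, together with confirming that the orthogonal complement $\m$ is genuinely $H$-invariant and not merely $K_p$-invariant, so that $\g=\h\oplus\m$ is a splitting of $H$-representations. The latter is already built into the construction of $E$, where the isomorphism $\h^*\cong\m^\circ$ is asserted to be $H$-equivariant, and it rests on the reductivity of $H=(K_p)_\C$ together with the $K$-invariance of the chosen Hermitian metric.
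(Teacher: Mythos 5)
Your proposal is correct and follows essentially the same route as the paper: realize $T_qE$ via the reduction $\la^{-1}(0)/H$ at $\hat q=(1,0,0)$, note that the bracket term $\xi([X,Y])$ in the left-trivialized form on $T^*G$ vanishes at $\xi=0$, compute $T_{\hat q}\la^{-1}(0)=\g\times\h^\circ\times V$ and $T_{\hat q}(H\cdot\hat q)=\h\times 0\times 0$, and identify $\g/\h\cong\m$, $\h^\circ\cong\m^*$. Your write-up is in fact slightly more explicit than the paper's on the descent of the symplectic form to the quotient and on the verification that $T_q(G\cdot q)=\m\times 0\times 0$, but these are details the paper leaves implicit rather than a different argument.
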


\begin{proof}
The canonical symplectic form on $T^*G=G\times\g^*$ at
$T_{(g,\xi)}(T^*G)=\g\times\g^*$ is
\begin{equation}\label{s6aeyjso}
((x,\varphi),(y,\psi))\mtoo \psi(x)-\varphi(y)+\xi([x,y])
\end{equation}
(see e.g.\ \cite[Proposition 4.4.1]{abr78}). In particular,
if $\hat{q}\coloneqq(1,0,0)\in T^*G\times V$, the symplectic
form on $T^*G\times V$ at $T_{\hat{q}}(T^*G\times
V)=\g\times\g^*\times V$ is
\[
((x,\varphi,u),(y,\psi,v)) \mto
\psi(x)-\varphi(y)+\omega_\C(u,v).
\]
Now, we have $d\la_{\hat{q}}(x,\xi,v)=-\xi|_\h$, so the
tangent space to $\la^{-1}(0)$ at $\hat{q}$ is
$\g\times\h^\circ\times V$. Moreover,
$T_{\hat{q}}(H\cdot\hat{q})=\h\times 0\times 0$, so
\[
T_q(\la^{-1}(0)/H) =
T_{\hat{q}}\la^{-1}(0)/T_{\hat{q}}(G\cdot\hat{q}) =
\g/\h\times\h^\circ\times V.
\]
Identifying $\g/\h$ with $\m$ and $\h^\circ$ with $\m^*$
gives the result.
\end{proof}

\begin{lemma}\label{lggo05z1}
Let $H\to\Sp(R,\omega)$ be a complex-symplectic
representation and $S\s R$ an $H$-invariant isotropic
subspace. Then, $R/S\cong S^*\times S^\omega/S$ as
$H$-modules.
\end{lemma}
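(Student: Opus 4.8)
The plan is to realize $R/S$ as an extension of $S^*$ by $S^\omega/S$ in the category of $H$-modules and then split that extension using reductivity of $H$. The whole argument is linear algebra organized around a single short exact sequence coming from the symplectic pairing.

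First I would use $\omega$ to build an $H$-equivariant linear map
$$\Psi:R\too S^*,\quad \Psi(v)=\omega(v,\cdot)|_S.$$
The kernel of $\Psi$ is by definition the symplectic complement $S^\omega$, and $\Psi$ is surjective because $\omega$ is non-degenerate: every functional on $S$ extends to $R$ and is therefore of the form $\omega(v,\cdot)$ for some $v\in R$. Equivariance is the one place where one must use the invariance of $\omega$ rather than just its non-degeneracy: for $h\in H$ and $s\in S$ we have $\Psi(h\cdot v)(s)=\omega(hv,s)=\omega(v,h^{-1}s)=\Psi(v)(h^{-1}s)$, which is exactly the statement that $\Psi$ intertwines the action on $R$ with the dual action on $S^*$. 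This gives a short exact sequence of $H$-modules
$$0\too S^\omega\too R\too S^*\too 0.$$

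Next, since $S$ is isotropic we have $S\s S^\omega=\ker\Psi$, so I can pass to the quotient by $S$ in the first two terms and obtain the $H$-equivariant short exact sequence
$$0\too S^\omega/S\too R/S\too S^*\too 0.$$
Finally, because $H$ is complex reductive, every finite-dimensional $H$-module is completely reducible, so this sequence splits $H$-equivariantly and yields $R/S\cong (S^\omega/S)\oplus S^*\cong S^*\times S^\omega/S$ as $H$-modules, which is the claim.

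There is no genuine geometric obstacle here; the substantive content is the identification $R/S^\omega\cong S^*$ furnished by the symplectic pairing, and the only points that require care are the bookkeeping of the dual action needed to check that $\Psi$ is a map of $H$-modules and the (standard) appeal to reductivity to split the sequence.
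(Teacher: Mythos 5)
Your proof is correct and is essentially the paper's argument: both rest on the $\omega$-induced $H$-equivariant map $R\to S^*$ with kernel $S^\omega$, together with complete reducibility of finite-dimensional $H$-modules. The only difference is organizational — the paper uses reductivity to choose an $H$-invariant complement of $S^\omega$ and builds an explicit surjection $R\to S^*\times S^\omega/S$ with kernel $S$, whereas you quotient by $S$ first and use reductivity to split the resulting short exact sequence; these are interchangeable.
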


\begin{proof}
Let $R\to S^*$ be the composition of the isomorphism $R\to
R^*$ induced by $\omega$ with the restriction map $R^*\to
S^*$. Let $R\to S^\omega$ be the projection along the
$H$-invariant complement of $S^\omega$ in $R$ (by complete
reducibility). These maps give an $H$-equivariant surjective
map $R\to S^*\times S^\omega/S$ with kernel $S^\omega\cap
S$. Since $S$ is isotropic, we have $S^\omega\cap S=S$.
\end{proof}

\begin{proof}[Proof of Theorem \ref{vhb1qe4r}]
Since $T_p(G \cdot p) \allowbreak \cong \g/\h$ is isotropic
in $T_pM$, Lemma \ref{lggo05z1} implies that
$T_pM/T_p(G\cdot p)\cong\h^\circ\times V$, where $V$ is the
complex-symplectic slice at $p$. Thus, by Theorem
\ref{8w30qpuz}, there is a $G$-saturated neighbourhood of
$p$ in $M\ass{\mu_\R}$ which is $G$-equivariantly
biholomorphic to a $G$-saturated neighbourhood of
$q=[1,0,0]$ in $E = G\times_H(\h^\circ\times V)$. Note that
by Proposition \ref{lopjs6qu}, $T_q(G\cdot q)$ is also
isotropic with respect to the canonical complex-symplectic
form on $E$.  Note also that any $G$-invariant neighbourhood
of the zero-section $0_E=G\cdot q$ of $E$ contains a
$G$-saturated neighbourhood, namely $G\times_H(H\cdot B)$
for a sufficiently small open ball $B$. Hence, it suffices
to show that, for any two $G$-invariant complex-symplectic
forms $\omega_\C$ and $\eta_\C$ on a $G$-invariant
neighbourhood of the zero-section $0_E=G\cdot q$ in $E$ such
that $T_q0_E$ is isotropic with respect to both, there is a
$G$-equivariant biholomorphism on a possibly smaller
neighbourhood of $0_E$ which pulls back $\eta_\C$ to
$\omega_\C$. By the holomorphic version of the
Darboux--Weinstein given in Theorem \ref{bkd2nozc}, it
suffices to find such a biholomorphism that makes them match
on $0_E$. This can be reduced to a linear algebraic problem,
as we now explain. The proof is inspired from
\cite[Proposition 2]{los06}.

Since $T_q0_E \s T_qE$ is isotropic with respect to both
$\omega_\C$ and $\eta_\C$, \cite[Lemma 6]{los06} says that
there exists an $H$-equivariant linear isomorphism
$\varphi:T_qE \to T_qE$ which restricts to the identity on
$T_q0_E$ and such that $\varphi^*\eta_\C=\omega_\C$. We have
$T_qE=\m\times\m^*\times V$ and $T_q0_E=\m\times 0\times 0$,
so $\varphi$ is of the form
\[
\varphi:\m\times\m^*\times V\too \m\times\m^*\times V,\quad
\varphi(x,\xi,v)=(x+A(\xi,v),B(\xi,v)),
\]
where $A:\m^*\times V\to\m$ and $B:\m^*\times V\to\m^*\times
V$ are some linear maps, with $B$ invertible. Then,
\[
\psi:E\too E,\quad \psi([g,\xi,v])=[ge^{A(\xi,v)},B(\xi,v)]
\]
is a $G$-equivariant biholomorphism with $d\psi_q=\varphi$.
In particular, $\psi^*\eta_\C|_q=\omega_\C|_q$ and, since
$\omega_\C$ and $\eta_\C$ are $G$-invariant and $\psi$ is
$G$-equivariant, this implies that $\psi^*\eta_\C|_{g\cdot
q}=\omega_\C|_{g\cdot q}$ for all $g\in G$, i.e.\
$\psi^*\eta_\C|_{0_E}=\omega_\C|_{0_E}$.

We can now apply Theorem \ref{bkd2nozc}, which shows the
existence of a $G$-equivariant complex-symplectic
isomorphism $f:U\to U'$ such that $f(p)=q$, where $U$ is a
$G$-saturated neighbourhood of $p$ in $M\ass{\mu_\R}$ and
$U'$ a $G$-saturated neighbourhood of $q$ in $E$. It remains
to show that $\kappa\circ f=\mu_\C$.  Since $(\kappa\circ
f)(p)=0=\mu_\C(p)$ and since moment maps are unique up to a
constant (see e.g.\ \cite[Ch.\ 26]{can01}) it suffices to
show that $\kappa\circ f$ is a moment map for the $G$-action
on $M\ass{\mu_\R}$. This follows from the fact that $f$ is a
$G$-equivariant complex-symplectic isomorphism.  \end{proof}


\section{Stratification of singular hyperk\"ahler
quotients}\label{x09hoi74}

The goal of this section is to prove Theorem \ref{71m3l02v},
describing the structure of singular hyperk\"ahler
quotients. Throughout this section, $(M, K, \mu)$ will be a
fixed $\mathsf{I}$-integrable tri-Hamiltonian
hyper\-k\"ahler manifold.

\subsection{Complex-analytic structure}\label{83lz4xwn}

Let us first explain how the results on analytic Hilbert
quotients of \S\ref{r7gzqckk} help us define a
complex-analytic structure on $M\slll{}K$. We use the
notation of \S\ref{fzal992o}; in particular, $G\coloneqq
K_\C$, $\mu_\C\coloneqq\mu_{\mathsf{J}}+i\mu_{\mathsf{K}}$,
and $\mu_\R\coloneqq\mu_{\mathsf{I}}$. First, note that
$\mu_\C^{-1}(0)\ass{\mu_\R}\coloneqq\mu_\C^{-1}(0)\cap
M\ass{\mu_\R}$ is a $G$-invariant closed complex-analytic
subspace of $M\ass{\mu_\R}$. Hence, by Proposition
\ref{wfgxy42b}(ii), its image
$\mu_\C^{-1}(0)\ass{\mu_\R}\sll{}G$ in
$M\ass{\mu_\R}\sll{}G$ is a closed complex-analytic
subspace, and the restriction 
\[
\mu_\C^{-1}(0)\ass{\mu_\R}\too
\mu_\C^{-1}(0)\ass{\mu_\R}\sll{}G
\]
is an analytic Hilbert quotient. Note that the space
$\mu_\C^{-1}(0)\ass{\mu_\R}\sll{}G$ has a $G$-orbit-type
partition as in \S\ref{am5r1mlu} and $M\slll{}K$ has a
$K$-orbit-type partition into hyperk\"ahler manifolds by
Theorem \ref{a0rzh3o6}. Moreover, by Heinzner--Loose's
Theorem \ref{w07wd84e} and Sjamaar's Theorem
\ref{k1q1jx4c}(i), we have $\mu^{-1}(0) \s
\mu_\C^{-1}(0)\ass{\mu_\R}$, and this inclusion descends to
an isomorphism 
\[
M\slll{}K\too\mu_\C^{-1}(0)\ass{\mu_\R}\sll{}G
\]
of partitioned spaces. In particular, $M\slll{}K$ has the
structure of a complex-analytic space. We denote the
structure sheaf by $\O_{\mathsf{I}}$.

We have shown the first part Theorem \ref{71m3l02v}(i); it
remains to show that the orbit-type partition of $M \slll{}
K$ is a complex-analytic Whitney stratification with respect
to this sheaf $\O_{\mathsf{I}}$ and is compatible with the
complex structures $\mathsf{I}_S$ on the pieces. This will
be shown later as a consequence of the local model Theorem
\ref{71m3l02v}(iv), which we show next.

\subsection{Linear hyperk\"ahler quotients}\label{kjmhibsa}

Let us first consider the case of a linear hyperk\"ahler
quotient. Let $V$ be a quaternionic vector space, i.e.\ a
real vector space endowed with three endomorphisms
$\mathsf{I,J,K}$ such that $\mathsf{I}^2 = \mathsf{J}^2 =
\mathsf{K}^2 = \mathsf{I}\mathsf{J}\mathsf{K} = -1$.  Then,
$V\cong\H^n$ for some $n$, so we may endow $V$ with a real
inner-product $\ip{\cdot,\cdot}$ such that $\mathsf{I,J,K}$
are skew-symmetric. This makes $V$ into a hyperk\"ahler
manifold, with K\"ahler forms
$\omega_{\mathsf{I}}(u,v)=\ip{\mathsf{I}u,v}$, etc. Let $L$
be a compact Lie group acting linearly on $V$ by preserving
$\ip{\cdot,\cdot}$ and $\mathsf{I,J,K}$. Then, there is a
hyperk\"ahler moment map, namely,
\[
\phi:V\too\mathfrak{l}^*\otimes\R^3,
\quad\phi(v)(x_1,x_2,x_3) =
\frac{1}{2}(\omega_{\mathsf{I}}(x_1v,
v),\omega_{\mathsf{J}}(x_2v, v),\omega_{\mathsf{K}}(x_3v,
v)).
\]
Moreover, the $L$-action extends to an
$\mathsf{I}$-complex-linear action of $H\coloneqq L_\C$, and
the underlying complex-Hamiltonian manifold is
$(V,\mathsf{I},\omega_\C,H,\Phi_V)$, where $\omega_\C
\coloneqq \omega_{\mathsf{J}}+i\omega_{\mathsf{K}}$ and
$\Phi_V$ is the canonical moment map $\Phi_V(v)(x) =
\frac{1}{2}\omega_\C(xv, v)$. By the Kempf--Ness theorem
\cite{kem79}, every point in $V$ is
$\phi_\mathsf{I}$-semistable (see e.g.\ \cite[Proposition
3.9]{nak99}), so the complex-analytic space
$(V\slll{\phi_V}L,\O_{\mathsf{I}})$ is simply the
analytification of the affine GIT quotient
$\Phi_V^{-1}(0)\sll{}H=\Spec\C[\Phi_V^{-1}(0)]^H$.

Conversely, if $H$ is any complex reductive group and
$H\to\Sp(V,\omega_\C)$ is a complex-symplectic
representation (e.g.\ a complex-symplectic slice) then
$V\cong\C^{2n}\cong\H^n$ for some $n$, so we may endow $V$
with the structure of a quaternionic vector space invariant
under the action of a maximal compact subgroup $L$ of $H$
(by averaging). Hence, the GIT quotient
$\Phi_V^{-1}(0)\sll{}H$ can always be viewed as a
hyperk\"ahler quotient.

\subsection{Local model}\label{4w2vegik}

We now prove the first part of Theorem \ref{71m3l02v}(iv)
which gives a local model for the complex-analytic structure
$\O_{\mathsf{I}}$. It will be shown later that there is a
holomorphic Poisson bracket on $\O_{\mathsf{I}}$ (Theorem
\ref{71m3l02v}(ii)) compatible with this model.

Let $q\in M\slll{}K$ and let $p\in\mu^{-1}(0)$ be a point
above $q$. Let $H\coloneqq G_p$ and let
$V\coloneqq(T_p(G\cdot p))^{\omega_\C}/T_p(G\cdot p)$ be the
complex-symplectic slice at $p$. Let $\Phi_V:V\to\h^*$ be
the canonical moment map. We want to show that $q$ has a
neighbourhood $U$ which is isomorphic as a complex-analytic
and partitioned space to a neighbourhood $U'$ of $0$ in the
GIT quotient $\Phi_V^{-1}(0)\sll{}H$. However, note that the
natural partition of $\Phi_V^{-1}(0)\sll{}H$ is by
$H$-orbit-types rather than $G$-orbit-types. To show that
the biholomorphism $U\to U'$ is an isomorphism of
partitioned spaces, we will first need to show that once we
refine the partitions into the connected components of the
pieces, the $G$-orbit-type partition of
$\Phi_V^{-1}(0)\sll{}H$ (i.e.\ saying that two points
$p,q\in\Phi_V^{-1}(0)\ps$ have the same orbit-type if $H_p$
and $H_q$ are conjugate by an element of $G$ rather than
$H$) is identical to the standard $H$-orbit-type partition.

\begin{lemma}\label{ped3pamt}
Let $(M,K,\mu)$ be a Hamiltonian K\"ahler manifold and let
$\tilde{K}$ be a compact Lie group containing $K$ as a Lie
subgroup. Then, the $K$- and $\tilde{K}$-orbit-type
partitions of $M\sll{\mu}K$ coincide. Moreover, if
$(M,K,\mu)$ is integrable, then the $\tilde{K}_\C$- and
$K_\C$-orbit-type partitions of $M\ass{\mu}\sll{}K_\C$ also
coincide.
\end{lemma}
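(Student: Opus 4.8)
The plan is to prove the compact statement by working upstairs in $\mu^{-1}(0)$, where $K$ genuinely acts, and then to deduce the integrable statement from it. Write $(H)_K$ and $(H)_{K'}$ for the conjugacy class of a closed subgroup $H\s K$ inside $K$ and inside $K'$ respectively, and $\mu^{-1}(0)_{(H)_K}$, $\mu^{-1}(0)_{(H)_{K'}}$ for the corresponding orbit type sets (the stabilizers are always taken for the $K$-action). Since $K$-conjugacy of stabilizers implies $K'$-conjugacy, we always have $\mu^{-1}(0)_{(H)_K}\s\mu^{-1}(0)_{(H)_{K'}}$, so the $K$-orbit type partition refines the $K'$-orbit type one. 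Hence it suffices to prove that each set $\mu^{-1}(0)_{(H)_K}$ is open and closed in $\mu^{-1}(0)_{(H)_{K'}}$. Granting this, $\mu^{-1}(0)_{(H)_{K'}}$ is the disjoint union of the clopen pieces $\mu^{-1}(0)_{(H_i)_K}$, where $(H_i)_K$ runs over the $K$-classes contained in $(H)_{K'}$; since the quotient by the compact group $K$ is an open map, these pieces descend to clopen subsets of $\mu^{-1}(0)_{(H)_{K'}}/K$, and therefore the connected components of $\mu^{-1}(0)_{(H)_{K'}}/K$ coincide with those of the $\mu^{-1}(0)_{(H_i)_K}/K$. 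This is exactly the assertion that the two partitions agree.

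Two ingredients drive the local comparison. The first is the theorem of Palais quoted above, applied to the compact group $K$ acting on the completely regular space $\mu^{-1}(0)$: every $p$ has a neighbourhood $V_p$ such that $K_y$ is $K$-conjugate to a subgroup of $K_p$ for all $y\in V_p$. The second is the elementary rigidity fact that if $H'\s H$ are compact Lie groups and $H'$ is isomorphic to $H$ (in particular if $H'$ is conjugate to $H$ in any ambient group), then $H'=H$: indeed $\dim H'=\dim H$ forces $(H')^\circ=H^\circ$, since a closed subgroup of full dimension of the connected group $H^\circ$ is open, and then $\pi_0(H')\hookrightarrow\pi_0(H)$ is an injection of finite sets of equal cardinality, hence a bijection.

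Combining these gives openness immediately. Fix $p$ with $K_p=H$ and let $y\in V_p\cap\mu^{-1}(0)_{(H)_{K'}}$. By Palais, $K_y$ is $K$-conjugate to some $H'\s H$, while $y\in\mu^{-1}(0)_{(H)_{K'}}$ means $K_y$ is $K'$-conjugate to $H$; hence $H'$ is $K'$-conjugate to $H$, so $H'=H$ by rigidity, and therefore $K_y\in(H)_K$. Thus $V_p\cap\mu^{-1}(0)_{(H)_{K'}}\s\mu^{-1}(0)_{(H)_K}$, proving that $\mu^{-1}(0)_{(H)_K}$ is open in $\mu^{-1}(0)_{(H)_{K'}}$. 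Running the same argument at points of each $K$-class occurring in $(H)_{K'}$ shows the complement of $\mu^{-1}(0)_{(H)_K}$ in $\mu^{-1}(0)_{(H)_{K'}}$ is a union of such open sets, hence open, so $\mu^{-1}(0)_{(H)_K}$ is also closed. This settles the compact case.

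For the integrable case, set $G=K_\C$ and $G'=K_\C'$. By Sjamaar's Theorem \ref{k1q1jx4c}(i) the homeomorphism $M\sll{\mu}K\cong M\ass{\mu}\sll{}G$ identifies the $K$-orbit type partition with the $G$-orbit type partition, and by Heinzner--Loose (Theorem \ref{w07wd84e}) the closed orbit over a point has stabilizer $G_p=(K_p)_\C$. So it is enough to prove the conjugacy correspondence: for $H_1,H_2\s K'$ compact, $(H_1)_\C\sim_{G'}(H_2)_\C$ if and only if $H_1\sim_{K'}H_2$. Indeed, applying this with $H_1=K_p$, $H_2=K_q$ shows the $G'$-orbit type partition of $M\ass{\mu}\sll{}G$ corresponds to the $K'$-orbit type partition of $M\sll{\mu}K$, and the coincidence of the latter with the $K$-orbit type partition (just proved) then transports, via Theorem \ref{k1q1jx4c}(i), to the coincidence of the $G'$- and $G$-orbit type partitions. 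The ``only if'' direction is the main obstacle. Its proof runs as follows: if $g(H_1)_\C g^{-1}=(H_2)_\C$ then $gH_1g^{-1}$ and $H_2$ are two maximal compact subgroups of $(H_2)_\C$, hence conjugate by an element of $(H_2)_\C$, which reduces us to the case where $H_1,H_2\s K'$ are compact subgroups conjugate in $G'$ by some $a$. Using the Cartan decomposition $a=u\exp(iX)$ with $u\in K'$, $X\in\k'$, and the Cartan involution $\theta$ fixing $K'$ with $\theta(\exp(iX))=\exp(-iX)$, the condition $\exp(iX)\,H_1\,\exp(-iX)\s K'$ forces $\exp(2iX)$ to commute with $H_1$, whence $\Ad(h)X=X$ for all $h\in H_1$ and $\exp(iX)$ centralizes $H_1$; thus $H_2=u H_1 u^{-1}$ with $u\in K'$, as required. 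The converse direction is immediate since $K'\s G'$ and complexification is functorial.
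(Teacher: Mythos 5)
Your proof is correct, and its overall route is the same as the paper's: both halves rest on Palais's subconjugacy neighbourhoods combined with the rigidity fact that an inclusion $H'\subseteq H$ of compact Lie groups with $H'$ abstractly isomorphic to $H$ (e.g.\ conjugate to it in an ambient group) forces $H'=H$, and both reduce the integrable statement, via Theorems \ref{w07wd84e} and \ref{k1q1jx4c}(i), to the group-theoretic claim that compact subgroups $H_1,H_2\subseteq K'$ are conjugate in $K'$ if and only if $(H_1)_\C,(H_2)_\C$ are conjugate in $K'_\C$. There are, however, two genuine differences in execution. In the compact case, the paper fixes a connected component $T$ of the $K'$-orbit type set and shows that the subset of $T$ with stabilizer in a fixed $K$-class is clopen, proving \emph{closedness} by extracting a convergent subsequence $k_n\to k$ of conjugating elements in the compact group $K$; you instead obtain openness of every $K$-orbit type set inside the $K'$-orbit type set from Palais plus rigidity, so closedness comes for free (the complement is a union of such open sets), which eliminates the limiting argument entirely. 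In the integrable case, the paper disposes of the conjugacy correspondence by citing Mostow decomposition via Sjamaar \cite{sja95}, whereas you prove it: you reduce, by conjugacy of maximal compact subgroups of $(H_2)_\C$, to $aH_1a^{-1}=H_2$ with $a\in K'_\C$, write $a=u\exp(iX)$, and use the Cartan involution to show $\exp(iX)$ centralizes $H_1$; this computation is sound (the key points being that $\theta$ fixes $H_1$ and $u^{-1}H_2u$ pointwise and that $Y\mapsto\exp(iY)$ is injective on $\k'$ by uniqueness of the polar decomposition), and it makes the lemma self-contained where the paper defers to a reference. One presentational nit: your openness argument is phrased only at a point $p$ with $K_p=H$ exactly; one should add that it applies verbatim at every point of $\mu^{-1}(0)_{(H)_K}$ after replacing $H$ by the ($K$-conjugate) stabilizer of that point, since the relevant orbit type sets are unchanged.
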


\begin{proof}
Let $X=\mu^{-1}(0)$ and let $\pi:X\to X/K$ be the quotient
map. Let $S\s X/K$ be a $\tilde{K}$-orbit-type piece, i.e.\
a connected component of a set of the form
$X_{(H)_{\tilde{K}}}/K$ for some closed subgroup $H\s K$,
where $(H)_{\tilde{K}}$ is the conjugacy class of $H$ in
$\tilde{K}$. We have $S=\pi(T)$ for some connected component
$T$ of $X_{(H)_{\tilde{K}}}$. Fix $x\in T$. We want to show
that if $y\in T$ then $K_x$ and $K_y$ (which are conjugate
in $\tilde{K}$) are in fact conjugate in $K$. Let
\[
A\coloneqq\{y\in T:\text{$K_y$ is conjugate to $K_x$ in
$K$}\}.
\]
It suffices to show that $A$ is both open and closed in $T$.
\textit{Closed:} Let $y\in\overline{A}\cap T$ and write
$y=\lim_{n\to\infty}y_n$ with $y_n\in A$. Then, there exist
$k_n\in K$ such that $k_nK_xk_n^{-1}=K_{y_n}$ for all $n$.
Since $K$ is compact, we may assume that
$\lim_{n\to\infty}k_n=k$ for some $k\in K$. Then,
$kK_xk^{-1}\s K_y$ by continuity of the action. Moreover,
$kK_xk^{-1}$ and $K_y$ are isomorphic since they are
conjugate in $\tilde{K}$ and since they have finitely many
connected components, the inclusion $kK_xk^{-1}\s K_y$
implies that $kK_xk^{-1}=K_y$. Thus, $A$ is closed.
\textit{Open:} Let $y\in A$. By Palais \cite[Corollary 2 on
p.\ 313]{pal61} there is a neighbourhood $V$ of $y$ in $X$
such that if $z\in V$ then $K_z$ is conjugate (in $K$) to a
subgroup of $K_y$. Then, $V\cap T$ is a neighbourhood of $y$
in $T$ and $V\cap T\s A$, so $A$ is open in $T$.

The second statement amounts to show that if $H$ and $L$ are
two closed subgroups of a compact Lie group $R$, then $H$
and $L$ are conjugate in $R$ if and only if $H_\C$ and
$L_\C$ are conjugate in $R_\C$. This follows from Mostow's
decomposition, as explained by Sjamaar \cite[Proof of
Theorem 2.10, first paragraph]{sja95}.
\end{proof}

Now, by picking a quaternionic structure on the
complex-symplectic slice $V$ as explained in
\S\ref{kjmhibsa}, we can apply this result to
$(V,K_p,\phi_{\mathsf{I}})$ and infer that the $G$- and
$H$-orbit-type partitions of $\Phi_V^{-1}(0)\sll{}H$
coincide. This will be used for the last part of the
following result.

\begin{proposition}\label{u5f7q715}
Let $q\in M\slll{}K$. Take a point $p\in\mu^{-1}(0)$ above
$q$, let $H\coloneqq G_p=(K_p)_\C$, and let
$V\coloneqq(T_p(G\cdot p))^{\omega_\C}/T_p(G\cdot p)$. Then,
there is a neighbourhood $U$ of $q$ in $M\slll{}K$, an open
ball $B\s V$ around $0$, and a biholomorphism \textup{(}with
respect to $\O_{\mathsf{I}}$\textup{)} from $U$ to the image
of $(H\cdot B)\cap\Phi_V^{-1}(0)$ in the GIT quotient
$\Phi_V^{-1}(0)\sll{}H=\Spec\C[\Phi_V^{-1}(0)]^H$ which maps
$q$ to the image of $0\in\Phi_V^{-1}(0)$. Moreover, this
biholomorphism is an isomorphism of partitioned spaces. 
\end{proposition}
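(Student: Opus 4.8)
The plan is to transport the statement to the complex picture via Proposition \ref{aeguxor2}, then apply the hyperk\"ahler local normal form to reduce to the model $E=G\times_H(\h^\circ\times V)$, intersect with the zero fibre of the complex moment map, and finally pass to $G$-quotients. By Proposition \ref{aeguxor2} it suffices to produce the required neighbourhood of $\pi(p)$ inside $\mu_\C^{-1}(0)\ass{\mu_\R}\sll{}G$. Since $p\in\mu^{-1}(0)\s\mu_\C^{-1}(0)\aps{\mu_\R}$, Theorem \ref{edlabpxd} (with Remark \ref{c9lhmxfq}) furnishes a $G$-equivariant complex-symplectic isomorphism $f\colon U\to U'$ with $\nu_\C\circ f=\mu_\C$ and $f(p)=[1,0,0]$, where $U$ is $G$-saturated in $M\ass{\mu_\R}$ and $U'=G\times_H(H\cdot\widetilde B)$ for an open ball $\widetilde B$ around $0$ in $\h^\circ\times V$.

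Because $f$ intertwines the two complex moment maps, it restricts to a $G$-equivariant biholomorphism $U\cap\mu_\C^{-1}(0)\to U'\cap\nu_\C^{-1}(0)$, and the next step is to compute $\nu_\C^{-1}(0)$. Using \eqref{0qk7ej1b}, $\nu_\C([g,\xi,v])=\Ad_g^*(\xi+\Phi_V(v))$ vanishes iff $\xi+\Phi_V(v)=0$; since $\xi\in\h^\circ$ while $\Phi_V(v)\in\m^\circ$ and $\g^*=\h^\circ\oplus\m^\circ$, this forces $\xi=0$ and $\Phi_V(v)=0$, whence $\nu_\C^{-1}(0)=G\times_H\Phi_V^{-1}(0)$. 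A short computation with the ball $\widetilde B$ then identifies $U'\cap\nu_\C^{-1}(0)$ with $G\times_H W$, where $W=(H\cdot B)\cap\Phi_V^{-1}(0)$ and $B=\{v\in V:(0,v)\in\widetilde B\}$ is an open ball in $V$.

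It remains to descend to $G$-quotients. On the left, $U\cap\mu_\C^{-1}(0)=U\cap\mu_\C^{-1}(0)\ass{\mu_\R}$ is $G$-saturated open in the closed subspace $\mu_\C^{-1}(0)\ass{\mu_\R}$, so by Proposition \ref{wfgxy42b} its image is an open neighbourhood $U_x$ of $x$ in $\mu_\C^{-1}(0)\ass{\mu_\R}\sll{}G\cong M\slll{\mu}K$ and the restriction is again an analytic Hilbert quotient. On the right, $W$ is $H$-saturated in $\Phi_V^{-1}(0)$ (as $H\cdot B$ is $H$-saturated in $V$, cf.\ \S\ref{kjmhibsa} and \cite{sno82}), and the key identity is the reduction in stages $(G\times_H W)\sll{}G\cong W\sll{}H$, which realises the quotient as the image of $(H\cdot B)\cap\Phi_V^{-1}(0)$ in $\Phi_V^{-1}(0)\sll{}H=\Spec\C[\Phi_V^{-1}(0)]^H$. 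Since $f$ is a $G$-equivariant biholomorphism of $G$-saturated opens, the categorical property of analytic Hilbert quotients lets it descend to a biholomorphism $U_x\to W\sll{}H$ sending $x$ to the image of $0$. Finally, $G$-equivariance of $f$ makes this descended map an isomorphism of $G$-orbit type partitions; on the left this is the partition of $M\slll{\mu}K$ by Proposition \ref{aeguxor2}, while on the right Lemma \ref{ped3pamt}, applied to the linear action of $K_p$ on $V$ (with a compatible quaternionic structure as in \S\ref{kjmhibsa}), shows the $G$- and $H$-orbit type partitions of $\Phi_V^{-1}(0)\sll{}H$ coincide, giving the asserted isomorphism of partitioned spaces.

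The main obstacle I expect is the reduction in stages $(G\times_H W)\sll{}G\cong W\sll{}H$ as analytic Hilbert quotients, i.e.\ verifying that $[g,y]\mapsto\pi_H(y)$ is locally Stein with invariant structure sheaf and that closed $G$-orbits correspond to closed $H$-orbits, together with the bookkeeping needed to ensure all the relevant opens are genuinely $G$- or $H$-saturated, so that Proposition \ref{wfgxy42b} applies and the biholomorphism descends.
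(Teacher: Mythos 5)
Your proposal follows essentially the same route as the paper's proof: the linearised normal form of Theorem \ref{edlabpxd} with Remark \ref{c9lhmxfq}, restriction of $f$ to the zero fibres of the two complex moment maps (your computation $\nu_\C^{-1}(0)=G\times_H\Phi_V^{-1}(0)$ is exactly the identification the paper uses), descent to quotients via Proposition \ref{wfgxy42b}(i), and Lemma \ref{ped3pamt} for the partition statement. The one step you leave open --- the reduction in stages $(G\times_H W)\sll{}G\cong W\sll{}H$ --- is closed in the paper without any verification of the locally Stein or structure-sheaf axioms: one proves the isomorphism \emph{globally}, at the level of affine GIT quotients, where the $G$-invariant morphism $G\times_H\Phi_V^{-1}(0)\to\Phi_V^{-1}(0)\sll{}H$, $[g,v]\mto$ (image of $v$), and the morphism $\Phi_V^{-1}(0)\to\nu_\C^{-1}(0)\sll{}G$, $v\mto$ (image of $[1,v]$), descend by the categorical property of GIT quotients to mutually inverse morphisms $\nu_\C^{-1}(0)\sll{}G\cong\Phi_V^{-1}(0)\sll{}H$; since affine GIT quotients analytify to analytic Hilbert quotients (Example \ref{m0r6tknt}), restricting this isomorphism to the saturated open images supplied by Proposition \ref{wfgxy42b}(i) yields precisely your identity with no further work. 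For the partition claim you should also record the stabilizer computation $G_{[g,v]}=gH_vg^{-1}$ on $G\times_H\Phi_V^{-1}(0)$: this is what makes the global isomorphism respect $G$-orbit types, after which Lemma \ref{ped3pamt} (applied, as you do, to the slice representation with a compatible quaternionic structure) identifies the $G$- and $H$-orbit type partitions of $\Phi_V^{-1}(0)\sll{}H$.
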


\begin{proof}
Let $E=G\times_H(\h^\circ\times V)$. Since $H$ is reductive
and acts freely on $G\times(\h^\circ\times V)$, $E$ is an
affine variety. Moreover, the moment map $\kappa:E\to\g^*$
is algebraic, so $\kappa^{-1}(0)$ is an affine variety in
$E$ and we can consider the GIT quotient
$\kappa^{-1}(0)\sll{}G = \Spec \C[\kappa^{-1}(0)]^G$. We
claim that $\kappa^{-1}(0)\sll{}G\cong\Phi_V^{-1}(0)\sll{}H$
as affine varieties. Indeed, we have
$\kappa^{-1}(0)=G\times_H\Phi_V^{-1}(0)$, so the inclusion
$\Phi_V^{-1}(0)\to \kappa^{-1}(0):v\mto[1,v]$ descends to a
morphism
$\psi:\Phi_V^{-1}(0)\sll{}H\to\kappa^{-1}(0)\sll{}G$. Also,
the projection
$\kappa^{-1}(0) =
G\times_H\Phi_V^{-1}(0)\to\Phi_V^{-1}(0)\sll{}H$ onto the
second factor descends to a morphism
$\kappa^{-1}(0)\sll{}G\to\Phi_V^{-1}(0)\sll{}H$ which is an
inverse of $\psi$.

Now, for an element $[g,v]\in
G\times_H\Phi_V^{-1}(0)=\kappa^{-1}(0)$ we have
$G_{[g,v]}=gH_vg^{-1}$, so $\psi$ is an isomorphism of
partitioned spaces with the $G$-orbit-type partitions on
both sides. As explained above, Lemma \ref{ped3pamt} implies
that the $G$-orbit-type partition on $\Phi_V^{-1}(0)\sll{}H$
coincides with the $H$-orbit-type partition. 

By the local normal form (Theorem \ref{vhb1qe4r}), there are
$G$-saturated neighborhoods $U\s M\ass{\mu_\R}$ and $U'\s E$
of $p$ and $[1, 0, 0]$, and an isomorphism $f:U\to U'$ of
complex-Hamiltonian $G$-manifolds. Note that $U'$ can be
taken to be of the form $U' = G\times_H(H\cdot B)$ for some
open ball $B$ around zero in $\m^*\times V$ (this is how
$U'$ was constructed in the proof). Then, $W\coloneqq
U\cap\mu_\C^{-1}(0)\ass{\mu_\R}$ is a $G$-saturated open
subset of $\mu_\C^{-1}(0)\ass{\mu_\R}$, and so is
$W'\coloneqq U'\cap\kappa^{-1}(0)$ in $\kappa^{-1}(0)$.
Moreover, by Proposition \ref{wfgxy42b}(i), the image
$W\sll{}G$ of $W$ in $\mu_\C^{-1}(0)\ass{\mu_\R}\sll{}G$ is
open and $W\to W\sll{}G$ is an analytic Hilbert quotient.
Similarly, $W'\to W'\sll{}G\s\kappa^{-1}(0)\sll{}G$ is an
analytic Hilbert quotient. Since $f:U\to U'$ is a
$G$-equivariant biholomorphism with $\kappa\circ f=\mu_\C$,
it restricts to a $G$-equivariant biholomorphism $W\to W'$
and hence to a biholomorphism $W\sll{}G\to W'\sll{}G$ which
respects the $G$-orbit-type partitions. Moreover, under the
isomorphism
$\kappa^{-1}(0)\sll{}G\cong\Phi_V^{-1}(0)\sll{}H$ above we
have an isomorphism $W'\sll{}G\cong(H\cdot
B\cap\Phi_V^{-1}(0))\sll{}H$ of complex-analytic and
partitioned spaces.
\end{proof}

\subsection{The orbit-type pieces are complex
submanifolds}\label{zevdyull}

As a first application of Proposition \ref{u5f7q715}, we
will show that the pieces in the orbit-type partition of $M
\slll{} K$ are complex submanifolds with respect to
$\O_{\mathsf{I}}$; this is one of the requirements in the
definition of complex-analytic Whitney stratifications.

We shall achieve this by describing the orbit-type partition
of $\Phi_V^{-1}(0)\sll{}H$, where $H$ is a complex reductive
group, $H\to\Sp(V,\omega_\C)$ a complex-symplectic
representation, and $\Phi_V: V \to \h^*$ the canonical
moment map. The set $V^H$ of fixed points of $H$ is a
complex-symplectic subspace, so $V = W\oplus V^H$, where $W$
is the symplectic complement. Then, $W$ is
complex-symplectic and $H$-invariant, so it provides a
complex-symplectic representation of $H$. The moment
map $\Phi_W:W\to\h^*$ associated with this representation is
simply the restriction of $\Phi_V$ to $W$, so we have the
decomposition
\[
\Phi_V^{-1}(0)\sll{}H=(\Phi_W^{-1}(0)\sll{}H)\times V^H.
\]
For each $L\s H$, let $(\Phi_W^{-1}(0)\sll{}H)_{(L)}$ be the
image of $\Phi_W^{-1}(0)\ps_{(L)}/H$ under the bijection
$\Phi_W^{-1}(0)\ps/H\to\Phi_W^{-1}(0)\sll{}H$. Then, the
pieces of the orbit-type partition of
$\Phi_V^{-1}(0)\sll{}H$ are the connected components of the
sets of the form $(\Phi_W^{-1}(0)\sll{}H)_{(L)}\times V^H$.

\begin{lemma}\label{n00l9ybu}
The orbit-type piece of $\Phi_V^{-1}(0)\sll{}H$ containing
$0$ is $\{0\}\times V^H$. 
\end{lemma}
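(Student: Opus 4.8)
The plan is to identify which subgroup $L$ indexes the piece containing $0$ and then to check that the corresponding set is already connected. Recall from the discussion preceding the lemma that the pieces of $\Phi_V^{-1}(0)\sll{}H$ are the connected components of the sets $(\Phi_W^{-1}(0)\sll{}H)_{(L)}\times V^H$, and that the orbit type of a point of a GIT quotient is by definition the orbit type of the unique closed $H$-orbit in its fibre. First I would observe that the point $0\in V$ is $H$-fixed, so it is its own closed orbit and has stabilizer $H$; under the decomposition $V=W\oplus V^H$ it corresponds to the pair $(0,0)$, and hence it lies in the stratum indexed by $L=H$, namely $(\Phi_W^{-1}(0)\sll{}H)_{(H)}\times V^H$.

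The crux is then to show that $(\Phi_W^{-1}(0)\sll{}H)_{(H)}$ is the single point $\{0\}$. By definition this set is the image of the polystable points of $\Phi_W^{-1}(0)$ whose stabilizer is conjugate to $H$ in $H$; since $H$ is the full acting group, the only subgroup conjugate to $H$ is $H$ itself, so this means the stabilizer equals $H$, i.e.\ the point is $H$-fixed. But $W$ is the symplectic complement of $V^H$, so $W\cap V^H=0$ and therefore $W^H=0$; thus the only $H$-fixed point of $\Phi_W^{-1}(0)\s W$ is $0$, whose image in $\Phi_W^{-1}(0)\sll{}H$ is the base point $0$. Hence $(\Phi_W^{-1}(0)\sll{}H)_{(H)}=\{0\}$.

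Combining these two observations, the orbit-type-$(H)$ stratum of $\Phi_V^{-1}(0)\sll{}H$ equals $\{0\}\times V^H$. Finally, since $V^H$ is a complex vector space it is connected, so $\{0\}\times V^H$ is connected and is therefore a single piece, which is precisely the piece containing $0$. I do not anticipate a serious obstacle here: the argument is essentially bookkeeping with the established product decomposition, and the only step needing care is the reduction of ``orbit type $(H)$'' to ``$H$-fixed'', which relies on $H$ being the whole acting group together with the vanishing $W^H=0$.
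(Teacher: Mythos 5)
Your proof is correct and follows essentially the same route as the paper's: both use the product decomposition $\Phi_V^{-1}(0)\sll{}H=\Phi_W^{-1}(0)\sll{}H\times V^H$ stated before the lemma and reduce everything to showing that the orbit-type-$(H)$ locus of $\Phi_W^{-1}(0)$ consists only of $H$-fixed points, which vanish in $W$ since $W\cap V^H=0$. The only (harmless) difference is in justifying that orbit type $(H)$ forces the stabilizer to equal $H$: you use the direct fact that $gHg^{-1}=H$ for $g\in H$, whereas the paper argues via the inclusion $gHg^{-1}\subseteq H$ together with the observation that abstractly isomorphic Lie groups with finitely many connected components cannot be properly nested.
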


\begin{proof}
Note that $V_{(H)} = V^H$ since if $v\in V$ and
$H_v=gHg^{-1}$ for some $g\in H$, then $gHg^{-1}\s H$, and
since $gHg^{-1}$ and $H$ are isomorphic Lie groups with
finitely many connected components this implies $gHg^{-1}=H$
and hence $H_v=H$. In particular, $W_{(H)}=W\cap V^H=0$, so
the piece containing $0$ is $(\Phi_W^{-1}(0) \sll{} H)_{(H)}
\times V^H = \{0\} \times V^H$. 
\end{proof}

\begin{proposition}\label{qzqjovnx}
The orbit-type pieces of $M\slll{}K$ are non-singular
complex-analytic subspaces with respect to
$\O_{\mathsf{I}}$.
\end{proposition}

\begin{proof}
By Lemma \ref{n00l9ybu} and Proposition \ref{u5f7q715}, the
embedding of a $K$-orbit-type piece in $M\slll{}K$ is
locally biholomorphic to the embedding of $\{0\}\times V^H$
in $(\Phi_W^{-1}(0)\sll{}H)\times V^H$.
\end{proof}

\subsection{Compatibility with the hyperk\"ahler
structures}\label{6lcwt24c}

Let $S \s M \slll{} K$ be an orbit-type piece. Then, by
Proposition \ref{qzqjovnx}, $S$ is a complex manifold. But
also, $S$ has a complex structure $\mathsf{I}_S$ as part of
its hyperk\"ahler structure $(g_S, \mathsf{I}_S,
\mathsf{J}_S, \mathsf{K}_S)$ of Theorem \ref{a0rzh3o6}. We
want to show that those are equal, or in other words:

\begin{proposition}\label{i4ja2909}
The inclusion $S \hookrightarrow M\slll{}K$ is holomorphic
with respect to $\mathsf{I}_S$ and $\O_{\mathsf{I}}$. 
\end{proposition}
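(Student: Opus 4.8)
The plan is to compare, on the smooth manifold $S$, the complex structure $\mathsf{I}_S$ coming from its hyperk\"ahler structure (Theorem \ref{a0rzh3o6}) with the complex structure that $\O_{\mathsf{I}}$ induces on it, which is a genuine complex structure since $S$ is a non-singular complex subspace by the previous proposition. The idea is to realise $S$, holomorphically for \emph{both} structures, inside a Sjamaar stratum of an auxiliary K\"ahler quotient, where Theorem \ref{k1q1jx4c}(iii) already identifies the two complex structures.

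First I would set up the ambient K\"ahler quotient. The triple $(M,K,\mu_\R)$ is an integrable Hamiltonian K\"ahler manifold, so by Heinzner--Loose (Theorem \ref{w07wd84e}) its K\"ahler reduction $M\sll{\mu_\R}K\cong M\ass{\mu_\R}\sll{}G$ is a complex space; write $\wh{\O}_{\mathsf{I}}$ for its structure sheaf. Since $\mu^{-1}(0)=\mu_\R^{-1}(0)\cap\mu_\C^{-1}(0)\s\mu_\R^{-1}(0)$ and $\O_{\mathsf{I}}$ was defined in \S\ref{83lz4xwn} so that $M\slll{\mu}K\cong\mu_\C^{-1}(0)\ass{\mu_\R}\sll{}G$ is a \emph{closed} complex subspace of $M\ass{\mu_\R}\sll{}G$ (Proposition \ref{wfgxy42b}(ii)), we have $\O_{\mathsf{I}}=\wh{\O}_{\mathsf{I}}|_{M\slll{\mu}K}$. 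Next, $S$ is a connected component of $\mu^{-1}(0)_{(H)}/K$, and since the orbit type is measured by the $K$-stabiliser, it is contained in a single Sjamaar stratum $T$ of $M\sll{\mu_\R}K$, namely a component of $\mu_\R^{-1}(0)_{(H)}/K$. By Theorem \ref{k1q1jx4c}(ii)--(iii), the inclusion $T\hookrightarrow M\sll{\mu_\R}K$ is holomorphic with respect to the K\"ahler complex structure $\mathsf{I}_T$ and $\wh{\O}_{\mathsf{I}}$.

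The crux is to show that the inclusion $S\hookrightarrow T$ is holomorphic with respect to $\mathsf{I}_S$ and $\mathsf{I}_T$. Both spaces are free quotients of the hyperk\"ahler submanifold $M_H$ by $L=N_K(H)/H$: by the constructions of \S\ref{xnx9ie9i} and \S\ref{wtcqem80}, $T$ is (a component of) the K\"ahler reduction $M_H\sll{(\mu_H)_\R}L$ while $S$ is (a component of) the hyperk\"ahler reduction $M_H\slll{\mu_H}L$. The key point is the standard fact from \cite{hit87} that, since the $L$-action is free, $(\mu_H)_\C^{-1}(0)$ is an $\mathsf{I}$-complex submanifold of $M_H$ and the hyperk\"ahler reduction, \emph{as a complex manifold with respect to $\mathsf{I}_S$}, is the K\"ahler reduction of $(\mu_H)_\C^{-1}(0)$ by $L$ with respect to $(\mu_H)_\R$. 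The inclusion $(\mu_H)_\C^{-1}(0)\hookrightarrow M_H$ is $L$-equivariant, $\mathsf{I}$-holomorphic, and intertwines the restricted real moment map with $(\mu_H)_\R$; it therefore descends to a holomorphic embedding $S\hookrightarrow T$ of the two K\"ahler reductions, as desired.

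Composing, $S\hookrightarrow T\hookrightarrow M\sll{\mu_\R}K$ is holomorphic for $\mathsf{I}_S$ and $\wh{\O}_{\mathsf{I}}$; since its image lies in the closed complex subspace $M\slll{\mu}K$ and $S$ is reduced, it factors as a holomorphic map $S\hookrightarrow M\slll{\mu}K$ for $\mathsf{I}_S$ and $\O_{\mathsf{I}}=\wh{\O}_{\mathsf{I}}|_{M\slll{\mu}K}$, which is the claim. I expect the main obstacle to be the third paragraph: carefully justifying that the $\mathsf{I}$-complex structure of the free hyperk\"ahler reduction $M_H\slll{\mu_H}L$ is exactly the complex structure of the K\"ahler reduction of the complex submanifold $(\mu_H)_\C^{-1}(0)$ (in particular that $0$ is a regular value of $(\mu_H)_\C$ along $\mu_H^{-1}(0)$ for a free action, so that $(\mu_H)_\C^{-1}(0)$ is smooth there), so that the two reduction pictures agree on the nose and the induced map on quotients is genuinely holomorphic.
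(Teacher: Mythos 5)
Your proposal is correct and takes essentially the same route as the paper's proof: reduce via the closed complex subspace $\mu_\C^{-1}(0)\ass{\mu_\R}\sll{}G\s M\ass{\mu_\R}\sll{}G$ to a statement about the ambient K\"ahler quotient, factor the inclusion through the Sjamaar stratum $T\s M\sll{\mu_\R}K$ containing $S$, and conclude with Theorem \ref{k1q1jx4c}(iii). The only difference is that your third paragraph spells out, via the identification of the hyperk\"ahler quotient with the K\"ahler reduction of $(\mu_H)_\C^{-1}(0)$ from \cite{hit87}, the holomorphicity of $S\hookrightarrow T$, which the paper asserts in one line as following from the definitions of $\mathsf{I}_S$ and of the K\"ahler structure on $T$ --- a correct elaboration of the step the paper leaves implicit.
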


\begin{proof}
We want to show that the composition $S\hookrightarrow
M\slll{}K\to \mu_\C^{-1}(0)\ass{\mu_\R}\sll{}G$ is
holomorphic, where $S$ has the complex structure
$\mathsf{I}_S$. Since $\mu_\C^{-1}(0)\ass{\mu_\R}\sll{}G$ is
a closed complex-analytic subspace of
$M\ass{\mu_\R}\sll{}G$, it suffices to show that the
composition $S\to
\mu_\C^{-1}(0)\ass{\mu_\R}\sll{}G\hookrightarrow
M\ass{\mu_\R}\sll{}G$ is holomorphic, which is the same as
the composition $S\hookrightarrow M\slll{}K\hookrightarrow
M\sll{\mu_\R}K\to M\ass{\mu_\R}\sll{}G$. The set $S$ is a
connected component of $\mu^{-1}(0)_{(H)}/K$ for some $H \s
K$. Hence, $S$ is a subset of a connected component $T$ of
$\mu_\R^{-1}(0)_{(H)}/K$. Moreover, $T$ is a stratum in the
K\"ahler quotient $M\sll{\mu_\R}K$ and, from the definition
of the K\"ahler structure on $T$ given in \S\ref{r7gzqckk}
and the definition of $\mathsf{I}_S$ given above, the
inclusion $S\hookrightarrow T$ is holomorphic. Hence, it
suffices to show that the composition $T\hookrightarrow
M\sll{\mu_\R}K\to M\ass{\mu_\R}\sll{}G$ is holomorphic, and
this follows from Theorem \ref{k1q1jx4c}(iii).
\end{proof}

\subsection{The frontier condition}\label{02umjsmv}

At this point we have shown Theorem \ref{71m3l02v}(i) except
for the fact that the orbit-type partition is a Whitney
stratification. In this section we prove the first step,
which is that this partition is a decomposition in the sense
of Definition \ref{pywimqkk} (this is a requirement in the
definition of Whitney stratified spaces). Since $K$ is
compact, $\mu^{-1}(0)/K$ satisfies the local condition, so
it only remains to show the frontier condition. This will be
achieved by the local model of Proposition \ref{u5f7q715},
so we first need to discuss how the frontier condition can
be inferred locally.

Given a partitioned space $(X,\P)$ we will denote by
$\P^\circ$ the refinement of $\P$ obtained by separating
every piece of $\P$ into its connected components. In
particular, the orbit-type partition of $M\slll{}K$ which we
are considering is the refinement $\P^\circ$ of
$\P\coloneqq\{\mu^{-1}(0)_{(K_p)}/K:p\in\mu^{-1}(0)\}$.
Also, we will say that a partitioned space $(X,\mathcal{P})$
is \defn{conical} at a stratum $S\in\P$ if
$S\s\overline{T}$ for all $T\in\P$.

The following lemma provides a local criterion for
partitioned spaces to satisfy the frontier condition.

\begin{lemma}\label{25cvqn8k}
Let $(X,\P)$ be a partitioned space. Suppose that every
point $x\in X$ has a neighbourhood $U$ such that if $S$ is
the stratum containing $x$, then $S\cap U$ is connected and
$(\P|_U)^\circ$ is conical at $S\cap U$. Then, $\P^\circ$
satisfies the frontier condition.
\end{lemma}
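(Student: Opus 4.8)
The plan is to fix two pieces $S,T\in\P^\circ$ with $S\cap\overline{T}\neq\emptyset$ and to show $S\s\overline{T}$ by proving that $A\coloneqq S\cap\overline{T}$ is both open and closed in $S$. Since $S$ is connected, being a connected component of a piece of $\P$, and since $A$ is nonempty by assumption, this forces $A=S$, which is exactly the frontier condition. That $A$ is closed in $S$ is immediate from the closedness of $\overline{T}$ in $X$, so the entire content of the lemma is the openness of $A$, and this is where the local hypothesis enters.

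To prove openness, fix $x\in A$ and let $S_0,T_0\in\P$ be the (unrefined) pieces with $S\s S_0$ and $T\s T_0$, so that $S$ and $T$ are connected components of $S_0$ and $T_0$ respectively. I would apply the hypothesis at the point $x$ to obtain a neighbourhood $U$ such that $S_0\cap U$ is connected and $(\P|_U)^\circ$ is conical at $S_0\cap U$. Connectedness makes $S_0\cap U$ itself a piece of $(\P|_U)^\circ$, so conicity gives the inclusion $S_0\cap U\s\overline{T'}$ (closure taken in $U$) for \emph{every} $T'\in(\P|_U)^\circ$. The goal is then to show $S\cap U\s\overline{T}$, which suffices, since $S\cap U$ is an open neighbourhood of $x$ in $S$ and the inclusion places it inside $A$.

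For this claim, I would first observe that $T\cap U\neq\emptyset$: as $x\in\overline{T}$ and $U$ is an open neighbourhood of $x$, the set $U$ meets $T$. Next, because $T$ is a connected component of $T_0$, any connected component of $T_0\cap U$ is connected and hence lies entirely in $T$ or entirely in $T_0\setminus T$; therefore $T\cap U$ is a union of connected components of $T_0\cap U$, and its nonemptiness lets me pick one such component $C$. Then $C\in(\P|_U)^\circ$ and $C\s T$, so conicity at $S_0\cap U$ yields $S_0\cap U\s\overline{C}\s\overline{T}$ (the first closure in $U$, which only enlarges when taken in $X$, and the inclusion $\overline{C}\s\overline{T}$ coming from $C\s T$). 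In particular $S\cap U\s S_0\cap U\s\overline{T}$, as required.

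The one point deserving care — and precisely the reason the argument needs no local finiteness assumption on $(X,\P)$ — is this final step: conicity delivers $S_0\cap U\s\overline{C}$ from a \emph{single} component $C$ of $T_0\cap U$, so there is never any need to control the (possibly infinite) family of components comprising $T\cap U$ all at once. The remaining subtleties are the routine identifications of closures relative to the open set $U$, namely that for a point of $U$ lying in $\overline{T}$ the relevant intersections with $T$ may be taken inside $U$; I would record these but not belabor them.
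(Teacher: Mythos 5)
Your proof is correct and takes essentially the same route as the paper's: both reduce the frontier condition to showing that $S\cap\overline{T}$ is open and closed in the connected piece $S$, and both prove openness by applying the conical hypothesis at a point $x$ and selecting a connected component $C$ of $T_0\cap U$ that lies inside $T$, so that conicity gives $S_0\cap U\s\overline{C}\s\overline{T}$. If anything, you are slightly more explicit than the paper about two small points it leaves implicit, namely that $T\cap U\ne\emptyset$ (since $x\in\overline{T}$) and the comparison between closures taken in $U$ and in $X$.
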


\begin{proof}
Let $S,T\in\P$ and let $S=\bigsqcup_i S_i$, $T=\bigsqcup_j
T_j$ be their connected components. Suppose that
$S_{i_0}\cap\overline{T_{j_0}}\ne\emptyset$ for some
$i_0,j_0$. We want to show that
$S_{i_0}\s\overline{T_{j_0}}$. The set $R\coloneqq
S_{i_0}\cap\overline{T_{j_0}}$ is closed in $S_{i_0}$, so it
suffices to show that $R$ is also open in $S_{i_0}$. Let
$x\in R$. Take a neighbourhood $U$ of $x$ in $X$ such that
$S\cap U$ is connected and $(\P|_U)^\circ$ is conical at
$S\cap U$. We claim that $S_{i_0}\cap U\s R$, or
equivalently, $S_{i_0}\cap U\s\overline{T_{j_0}}$. If $T\cap
U=\bigsqcup_k C_k$ are the connected components of $T \cap
U$, then, since $(\P|_U)^\circ$ is conical at $S\cap U$, we
have $S\cap U\s \overline{C_k}$ for all $k$. But the set of
connected components of $T\cap U$ is the union of the set of
connected components of $T_j\cap U$ for all $j$, so there
exists $k_0$ such that $C_{k_0}\s T_{j_0}\cap U$ and hence
$S_{i_0}\cap U\s S\cap
U\s\overline{C_{k_0}}\s\overline{T_{j_0}}$. 
\end{proof}

\begin{proposition}\label{47g669vy}
The orbit-type partition of $M\slll{}K$ satisfies the
frontier condition and hence is a decomposition.
\end{proposition}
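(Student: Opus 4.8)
The plan is to deduce the frontier condition from the local criterion of Lemma \ref{25cvqn8k}, feeding it the local model of Proposition \ref{u5f7q715}. By that lemma, it suffices to produce, for each $x\in M\slll{\mu}K$, a neighbourhood $U$ such that the orbit type piece $S$ through $x$ meets $U$ in a connected set and $(\P|_U)^\circ$ is conical at $S\cap U$. Fix $x$, a point $p\in\mu^{-1}(0)$ above it, $H=G_p$, and the complex-symplectic slice $V$. By Proposition \ref{u5f7q715} I may transport the problem to the GIT quotient $\Phi_V^{-1}(0)\sll{}H$: taking $U$ to be the image of $(H\cdot B)\cap\Phi_V^{-1}(0)$ for a small ball $B\s V$ around $0$, it is enough to show that this local model is conical at the piece through the image of $0$, which by Lemma \ref{n00l9ybu} is $\{0\}\times V^H$.

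The engine of the argument is a scaling action. Write $V=W\oplus V^H$, where $W$ is the complex-symplectic complement of the fixed space $V^H$. Since $V^H$ is $\omega_\C$-orthogonal to $W$ and annihilated by $\h$, one has $\Phi_V(w+u)=\Phi_W(w)$ for $w\in W$ and $u\in V^H$, so that $\Phi_V^{-1}(0)=\Phi_W^{-1}(0)\times V^H$ is a cone in the $W$-directions. I would then introduce the $\C^\times$-action $t\cdot(w,u)=(tw,u)$. It commutes with $H$ (which acts linearly and fixes $V^H$), satisfies $\Phi_V(t\cdot(w,u))=t^2\Phi_V(w,u)$ and hence preserves $\Phi_V^{-1}(0)$, preserves the $H$-orbit type partition (as $H_{(tw,u)}=H_{(w,u)}$ for $t\ne 0$), and maps $B$ into itself for $|t|\le 1$. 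Consequently it descends to a holomorphic action on $U$ preserving the orbit type partition, whose $t\to 0$ limit lands in the piece $\{0\}\times V^H$.

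The required conicality then follows from the structure of the affine cone. The invariant ring $\C[\Phi_W^{-1}(0)]^H$ is non-negatively graded by polynomial degree with degree-zero part $\C$, so on $\Phi_W^{-1}(0)\sll{}H$ the image of $0$ is the attracting fixed point and $\lim_{t\to 0}t\cdot y=0$ for every $y$. Thus $0$ lies in the closure of every $H$-orbit type piece $T'$ of $\Phi_W^{-1}(0)\sll{}H$, and since every piece of $\Phi_V^{-1}(0)\sll{}H$ is of the form $T'\times V^H$, I obtain $\{0\}\times V^H\s\overline{T'\times V^H}$. Restricting to the scaling-invariant neighbourhood $U$ and refining into connected components changes nothing essential: the $V^H$-factor is a ball, hence connected, and each $\C^\times$-invariant component of $T'$ still contracts to $0$. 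Hence $(\P|_U)^\circ$ is conical at $\{0\}\times(V^H\cap B)$, which is connected, and Lemma \ref{25cvqn8k} applies to yield the frontier condition.

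The points that require care are bookkeeping rather than genuine obstacles. One must check that the scaling genuinely descends to a holomorphic action on the analytic Hilbert quotient and that the limit $\lim_{t\to 0}$ may be computed there through the grading of the invariant ring, and one must arrange that the neighbourhood supplied by Proposition \ref{u5f7q715} can be taken scaling-invariant, which is exactly why I scale along $W$ alone and use a ball $B$ so that $|t|\le 1$ preserves $B$. The essential idea, and the only real subtlety, is precisely this choice of direction: scaling all of $V$ would contract the model to the single point $0$ rather than to the whole stratum $\{0\}\times V^H$, and so would fail to give conicality at $S$.
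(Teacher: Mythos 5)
Your overall strategy is the same as the paper's: reduce via Lemma \ref{25cvqn8k} and Proposition \ref{u5f7q715} to the local model $\Phi_V^{-1}(0)\sll{}H$ near $[0]$, identify the piece through $[0]$ as $\{[0]\}\times V^H$ by Lemma \ref{n00l9ybu}, and exploit the quadratic scaling $\Phi_W(tw)=t^2\Phi_W(w)$ to contract pieces onto that stratum. However, there is a genuine gap at the decisive step, hidden in the sentence ``restricting to the scaling-invariant neighbourhood $U$ and refining into connected components changes nothing essential.'' That refinement is precisely where the difficulty lies: Lemma \ref{25cvqn8k} demands $S\cap U\s\overline{T'}$ for \emph{every connected component} $T'$ of every piece intersected with $U$. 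Your scaling $t\cdot(w,u)=(tw,u)$ fixes the $V^H$-coordinate, so contracting a point $([w],u)\in T'$ along $t\mapsto([tw],u)$ only produces the limit $([0],u)$; running this over all of $T'$ yields $\{[0]\}\times \mathrm{pr}_{V^H}(T')\s\overline{T'}$, where $\mathrm{pr}_{V^H}(T')$ is the projection of $T'$ to $V^H$. To conclude conicality at $S\cap U=\{[0]\}\times(V^H\cap B)$ you must still show that this projection is dense in $V^H\cap B$, i.e.\ that from $([w],u)\in T'$ one can reach, \emph{within the same component} $T'$, points whose $V^H$-coordinate is an arbitrary prescribed $v\in V^H\cap B$. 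Nothing in your argument does this: the grading/attracting-fixed-point fact is a statement about the global quotient $\Phi_W^{-1}(0)\sll{}H$ and does not control which component of $T\cap U$ a limit is approached through, and connectedness of the ball $V^H\cap B$ is no substitute, because the components of $T\cap U$ are not products (the set $U$, being the image of $(H\cdot B)\cap\Phi_V^{-1}(0)$, does not factor as a product with $V^H\cap B$; the allowed $V^H$-coordinates over $[w]$ shrink as the closed orbit $H\cdot w$ moves away from $0$).

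This is exactly the point on which the paper's proof spends its effort. Given $([w],u)\in T'$ and a target $([0],v)$, the paper picks $h\in H$ with $hw+u\in B$, chooses $t_0>0$ small enough that $t_0hw+v\in B$, and uses convexity of $B$ to show that the straight line from $w+u$ to $t_0w+v$ stays in $(H\cdot B)\cap\bigl((\Phi_W^{-1}(0)\ps)_{(L)}\times V^H\bigr)$ --- its $W$-component runs through positive multiples of $w$, so orbit type and polystability are unchanged --- whence $([t_0w],v)$ lies in the \emph{same} component $T'$; only then does the scaling path $([tw],v)\to([0],v)$ finish the argument. Your proof can be repaired along your own lines (for instance, show that each component $T'$ is invariant under your scaling with $t\in(0,1]$ and deduce, again via convexity of $B$, that its $V^H$-projection exhausts $V^H\cap B$), but the repair is an argument of essentially the same substance as the paper's path construction, not bookkeeping; asserting it away is the mistake. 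Your closing remark correctly explains why one should not scale all of $V$, but misidentifies that as the only subtlety: scaling $W$ alone creates precisely the problem of moving the $V^H$-coordinate from $u$ to $v$ without leaving $T'$, which is the heart of the proof.
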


\begin{proof}
Let $q\in M\slll{}K$, let $V$, $H$ and $B\s V$ be as in
Proposition \ref{u5f7q715}, and let $U=(H\cdot
B)\cap\Phi_V^{-1}(0)\sll{}H$. We denote by $[v]$ the image
of a point $v\in \Phi^{-1}_V(0)$ in the GIT quotient
$\Phi_V^{-1}(0)\sll{}H$. Then, $q$ has a neighbourhood
isomorphic to $U$ as partitioned spaces, with an isomorphism
sending $q$ to $[0]$. Let $\P$ be the orbit-type partition
of $\Phi_V^{-1}(0)\sll{}H$ and let $S\in\P$ be the piece
containing $[0]$. By Lemma \ref{25cvqn8k}, it suffices to
show that $S\cap U$ is connected and $(\P|_U)^\circ$ is
conical at $S\cap U$.  By Lemma \ref{n00l9ybu},
$S=\{[0]\}\times V^H$ so $S\cap U=\{[0]\}\times (V^H\cap B)$
is connected. To show that $(\P|_U)^\circ$ is conical at
$S\cap U$, let $T'\in(\P|_U)^\circ$. Then, $T'$ is a
connected component of $T\cap U$, where
$T\coloneqq(\Phi_W^{-1}(0)\sll{}H)_{(L)}\times V^H$ for some
$L\s H$. We need to show that $S\cap U\s\overline{T'}$. Let
$([0],v)\in S\cap U$, where $v\in V^H\cap B$. Take any point
$([w],u)$ of $T'$, where $w\in(\Phi_W^{-1}(0)\ps)_{(L)}$,
$u\in V^H$, and $w+u\in H\cdot B$. It suffices to find a
continuous path $\gamma:(0,1]\to T\cap U$ such that
$\gamma(1)=([w],u)$ and $\lim_{t\to 0}\gamma(t)=([0],v)$.
Let $h\in H$ be such that $w+u\in h^{-1}\cdot B$. Then,
$hw+u\in B$. We also have $v\in B$, so there exists $t_0>0$
small enough so that $t_0hw+v\in B$ and hence $t_0w+v\in
H\cdot B$. Now, $\Phi_W(tw)=t^2\Phi_W(w)=0$ and hence
$([tw],v)\in T\cap U$ for all $t>0$ and $([tw],v)\to
([0],v)$ as $t\to 0$. Moreover, since $B$ is convex, the
straight line from $t_0w+v$ to $w+u$ will stay in $(H\cdot
B)\cap((\Phi_W^{-1}(0)\ps)_{(L)}\times V^H)$ and hence
$([t_0w],v)$ and $([w],u)$ are in the same path component
$T'$ of $T\cap U$.
\end{proof}

\subsection{Whitney conditions}\label{vpj7myf2}

We show that the orbit-type partition of $M\slll{}K$ is a
complex-analytic Whitney stratification with respect to
$\O_{\mathsf{I}}$ and hence a stratification in the sense of
Definition \ref{w2ev8mnf}. In particular, this completes
the proof of Theorem \ref{71m3l02v}(i). Our proof is similar
to that of Sjamaar--Lerman \cite[\S6]{sja91}. Let us first
recall the following result of Whitney.

\begin{lemma}[{Whitney \cite[Lemma
19.3]{whi65}}]\label{rnwtmgk1}
Let $S$ and $T$ be disjoint complex submanifolds of a
complex-analytic space $X$ with $S\s\overline{T}$ and $\dim
S<\dim T$. There is a (possibly empty) complex-analytic
subspace $A$ of $S$ with $\dim A<\dim S$ such that $T$ is
regular over $S\setminus A$.\qed
\end{lemma}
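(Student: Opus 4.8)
The plan is to follow Whitney's original strategy \cite{whi65}: realize the locus in $S$ where regularity fails as a complex-analytic subset, and then show that this locus is proper, hence of dimension $<\dim S$. Two reductions come first. Since Whitney condition B implies condition A (see \cite[\S2]{mat12}), it is enough to produce an analytic set $A\s S$ with $\dim A<\dim S$ outside of which condition B holds for $T$ over $S$; then A holds automatically there and $T$ is regular over $S-A$. Next, I would pass from the real secant lines $\R(x_i-y_i)$ of Definition \ref{18w9hd9y} to complex secant lines $\C(x_i-y_i)$. This is harmless, and in fact an equivalence: any limit $V$ of the complex tangent planes $T_{x_i}T$ is again a complex subspace (the complex Grassmannian is closed in the real one), and for a complex subspace $V$ one has $\R(x_i-y_i)\s V$ if and only if $\C(x_i-y_i)\s V$. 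Working with complex secants is what makes the incidence condition algebraic. Finally, the statement being local, I fix a chart presenting a neighbourhood as a locally closed pair $S,T\s\C^N$ with $S\s\overline T$, and set $d=\dim_{\C}T$.

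The heart of the matter is an analytic \emph{secant--tangent set}. Off the diagonal, the holomorphic map
$$(x,y)\mto(x,y,[x-y],T_xT)$$
sends $\{(x,y)\in T\times S:x\ne y\}$ into $\C^N\times\C^N\times\CP^{N-1}\times\Gr(d,N)$; let $\tilde\Gamma$ be the closure of its image. Because the factors $\CP^{N-1}$ and $\Gr(d,N)$ are compact, the projection to $\C^N\times\C^N$ is proper, and Remmert's proper mapping theorem gives that $\tilde\Gamma$ is complex-analytic. Its fibre over a diagonal point $(y,y)$ with $y\in S$ consists exactly of the pairs $(l,V)$ that arise as simultaneous limits of secant lines and tangent planes of $T$ along sequences $x_i\in T$, $y_i\in S$ approaching $y$. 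Intersecting $\tilde\Gamma$ with the diagonal $\{x=y\}$ yields an analytic set $\Gamma_\Delta$ carrying a proper projection $q:\Gamma_\Delta\to S$, and the incidence condition $l\s V$ defines a closed analytic subset $\G\s\Gamma_\Delta$. By the reductions above, condition B holds at $y$ precisely when the whole fibre $q^{-1}(y)$ lies in $\G$.

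I would then take $A$ to be the image under $q$ of the union of those irreducible components of $\Gamma_\Delta$ not contained in $\G$. This $A$ is analytic by Remmert's theorem, and it contains every $y\in S$ at which B fails: such a $y$ is the image of a point of $\Gamma_\Delta\setminus\G$, which lies on a component not contained in $\G$. Conversely, if $y\notin A$ then $q^{-1}(y)\s\G$ and B holds at $y$; thus $T$ is regular over $S-A$. It remains to prove $\dim A<\dim S$, i.e.\ that $A$ is a proper analytic subset of each component of $S$. This is the genuine obstacle and the core of Whitney's contribution: one must bound the dimension of the locus of degenerate limits, showing that the ``bad'' components of $\Gamma_\Delta$ project into a proper analytic subvariety of $S$. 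I would carry this out by Whitney's dimension count over $S$, parametrizing the limiting secant directions and tangent planes and using a Sard-type transversality argument, in which the complex structure of $T$ (forcing the limiting planes to be complex) is used essentially — it is precisely this estimate, rather than the comparatively formal analyticity and existential-quantifier bookkeeping handled by Remmert's theorem, that excludes the pathologies one must rule out and delivers $\dim A<\dim S$.
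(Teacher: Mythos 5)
This lemma is quoted in the paper as an external result (Whitney \cite[Lemma 19.3]{whi65}) with no proof supplied, so the only meaningful comparison is with Whitney's own argument --- and measured against that, your proposal has two genuine gaps. The first is the claimed analyticity of $\tilde\Gamma$. Remmert's proper mapping theorem says that the image of an \emph{analytic} set under a proper holomorphic map is analytic; it says nothing about closures being analytic. Your incidence set $\{(x,y,[x-y],T_xT)\}$ is analytic only in $(U\setminus\Delta)\times\CP^{N-1}\times\Gr(d,N)$, where $U$ is an open set of $\C^N\times\C^N$ over which $T\times S$ is closed. But $S\s\overline{T}\setminus T$, so the diagonal points $(y,y)$ with $y\in S$ --- exactly the points over which you need $\tilde\Gamma$ to be analytic --- lie outside that region, and the closure of an analytic set taken beyond its domain of analyticity is in general not analytic. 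Establishing that these limit sets of secants and tangent planes are analytic is precisely one of the substantial technical achievements of Whitney's paper (his theory of secant varieties and tangent stars, built up in the sections preceding Lemma 19.3); it cannot be obtained by compactness of the Grassmannian factors alone.

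The second and decisive gap is the one you acknowledge yourself: the estimate $\dim A<\dim S$, i.e.\ that the bad locus is a \emph{proper} analytic subset of $S$, is never proven. You defer it to ``Whitney's dimension count'' and ``a Sard-type transversality argument,'' but this estimate is the entire mathematical content of the lemma --- everything before it is bookkeeping that any definition-chasing produces. As written, your argument reduces the lemma to itself and then cites the source. The reductions you do carry out (condition B implies condition A, and the passage from real to complex secant lines) are correct and are indeed part of the standard setup, but a referee would not accept the remainder as a proof; the honest course, which is what the paper itself does, is to state the lemma and cite Whitney.
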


\begin{corollary}\label{qbagt6ko}
Let $X$ be a complex-analytic space and $T\s X$ a complex
submanifold with $\dim T>0$. Then, $T$ is regular over
$\{x\}$ for all $x\in \overline{T}\setminus T$.
\end{corollary}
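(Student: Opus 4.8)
The plan is to reduce the statement immediately to Whitney's Lemma \ref{rnwtmgk1} by taking the lower-dimensional stratum to be the singleton $\{x\}$. First I would verify that the hypotheses of Lemma \ref{rnwtmgk1} hold with $S=\{x\}$: the set $\{x\}$ is a zero-dimensional complex submanifold of $X$; it is disjoint from $T$ because $x\in\overline{T}-T$ forces $x\notin T$; it satisfies $\{x\}\s\overline{T}$ since $x\in\overline{T}$; and $\dim\{x\}=0<\dim T$ by the assumption $\dim T>0$. Note also that Lemma \ref{rnwtmgk1} is stated with no lower bound on $\dim S$, so the degenerate case $\dim S=0$ is genuinely permitted.

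Applying Lemma \ref{rnwtmgk1} then produces a complex subspace $A\s\{x\}$ with $\dim A<\dim\{x\}=0$ such that $T$ is regular over $\{x\}-A$. The only complex subspace of $\{x\}$ of strictly negative dimension is the empty set, so necessarily $A=\emptyset$ and hence $\{x\}-A=\{x\}$. This gives precisely that $T$ is regular over $\{x\}$, as claimed.

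I do not expect any genuine obstacle here: all of the content is already carried by Lemma \ref{rnwtmgk1}, and the corollary is just the degenerate instance in which the base stratum is a single point. The only thing worth flagging is the dimension bookkeeping, namely the convention that a subspace of strictly negative dimension must be empty; it is exactly this that rules out any exceptional set $A$ and lets the regularity conclusion extend to the whole of $\{x\}$. (One may observe in passing that Whitney condition A is automatic over a point, since $T_x\{x\}=0\s V$ for every limit subspace $V$, so the substance of the statement is condition B, which is what Lemma \ref{rnwtmgk1} supplies.)
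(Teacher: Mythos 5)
Your proof is correct and is essentially identical to the paper's: both apply Lemma \ref{rnwtmgk1} with $S=\{x\}$ and conclude via the convention (which the paper attributes explicitly to Whitney) that $\dim A<0$ forces $A=\emptyset$. Your hypothesis-checking and the observation that condition A is vacuous over a point are fine but add nothing beyond what the paper already records.
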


\begin{proof}
Use Lemma \ref{rnwtmgk1} with $S=\{x\}$.
\end{proof}

\begin{proposition}
The orbit-type partition of $M\slll{}K$ is a
complex-analytic Whitney stratification with respect to
$\O_{\mathsf{I}}$. In particular, it is a stratification in
the sense of \emph{Definition \ref{w2ev8mnf}}.
\end{proposition}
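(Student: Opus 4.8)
The plan is to reduce everything to the linear local models furnished by Proposition \ref{u5f7q715} and then run an induction on dimension. Since the Whitney conditions are local and invariant under biholomorphisms (Definition \ref{9fd7wy3s} and the discussion preceding it), and since by Proposition \ref{u5f7q715} every point $x\in M\slll{\mu}K$ has a neighbourhood isomorphic as a complex and partitioned space to a neighbourhood of $0$ in some GIT quotient $\Phi_V^{-1}(0)\sll{}H$, it suffices to prove that the orbit type partition of each such $\Phi_V^{-1}(0)\sll{}H$ satisfies Whitney conditions A and B near $0$. The point that makes an induction possible is that, as explained in \S\ref{kjmhibsa}, each $\Phi_V^{-1}(0)\sll{}H$ is itself a linear hyperk\"ahler quotient, so Proposition \ref{u5f7q715} may be applied to it again at any of its points. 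I would therefore prove, by induction on $\dim_\C V$, that for every complex-symplectic representation $H\to\Sp(V,\omega_\C)$ of a complex reductive group $H$ the orbit type partition of $\Phi_V^{-1}(0)\sll{}H$ is a complex Whitney stratification; the case $\dim V=0$ is trivial.

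For the inductive step I would first dispose of the fixed directions. Using the decomposition $\Phi_V^{-1}(0)\sll{}H=\Phi_W^{-1}(0)\sll{}H\times V^H$ from \S\ref{zevdyull}, where $W$ is the symplectic complement of $V^H$: if $V^H\ne 0$ then $\dim_\C W<\dim_\C V$, so by the inductive hypothesis $\Phi_W^{-1}(0)\sll{}H$ is Whitney, and then Lemma \ref{exwev7oh} shows that the product with the manifold $V^H\cong\C^k$ is again Whitney. Hence I may assume $V^H=0$, in which case Lemma \ref{n00l9ybu} tells us that the stratum through the origin is $\{0\}$, the only zero-dimensional stratum containing $0$.

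It then remains to verify that $T$ is regular over $S$ for every ordered pair of strata $S\le T$, which I check at each $x\in\overline T\cap S$. If $\dim S=0$ then $S$ is a single point near $x$ and $x\in\overline T\setminus T$, so Corollary \ref{qbagt6ko} gives regularity directly. If $\dim S>0$ then, since the stratum through $0$ is the zero-dimensional set $\{0\}$, necessarily $x\ne 0$; applying Proposition \ref{u5f7q715} to the linear hyperk\"ahler quotient $\Phi_V^{-1}(0)\sll{}H$ at $x$ produces a local model $\Phi_{V'}^{-1}(0)\sll{}L=\Phi_{W'}^{-1}(0)\sll{}L\times (V')^L$ near $0$, in which $S$ corresponds to $\{0\}\times(V')^L$. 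Because $\dim S>0$ the fixed space $(V')^L$ is nonzero, so $\dim_\C W'=\dim_\C V'-\dim_\C (V')^L<\dim_\C V'\le\dim_\C V$; the inductive hypothesis makes $\Phi_{W'}^{-1}(0)\sll{}L$ Whitney, and Lemma \ref{exwev7oh} promotes this to the product, whence $T$ is regular over $S$ at $x$. This closes the induction, and Proposition \ref{91vu0grw} then upgrades the conclusion to a stratification in the sense of Definition \ref{w2ev8mnf}.

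The step I expect to be the main obstacle is precisely the dimension bookkeeping in the positive-dimensional case: the complex-symplectic slice $V'$ at $x\ne 0$ need not have smaller dimension than $V$ (this already fails when the orbit $H\cdot w$ is zero-dimensional), so one cannot naively induct on the dimension of the slice. The resolution is to split off the fixed subspace $(V')^L$ --- which is forced to be positive-dimensional exactly because $x$ lies on a positive-dimensional stratum --- and to run the induction on the complementary summand $W'$, whose dimension is strictly smaller. Ensuring that this reduction interfaces cleanly with Proposition \ref{u5f7q715}, so that the \emph{partitions}, and not merely the underlying complex spaces, are identified, is the delicate part.
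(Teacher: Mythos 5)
Your proof is correct, and it is built from exactly the same ingredients as the paper's: Proposition \ref{u5f7q715}, the splitting $\Phi_V^{-1}(0)\sll{}H=\Phi_W^{-1}(0)\sll{}H\times V^H$, Lemma \ref{n00l9ybu}, Lemma \ref{exwev7oh}, and Corollary \ref{qbagt6ko}; your dimension bookkeeping ($\dim_\C W'=\dim_\C V'-\dim_\C(V')^L<\dim_\C V$ because $(V')^L\ne 0$ on a positive-dimensional stratum) does close the induction. However, the induction is unnecessary, and the ``main obstacle'' you identify is an artifact of how you set up the reduction. Regularity of $T$ over $S$ only has to be verified at points of the smaller stratum $S$, and \emph{every} point $x$ of $M\slll{\mu}K$ --- in particular every point of $S$ --- is the distinguished point of its own local model from Proposition \ref{u5f7q715}, which sends $x$ to $[0]$ and identifies the partitions. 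Hence it suffices to check the Whitney conditions in each linear model $\Phi_V^{-1}(0)\sll{}H$ \emph{at the single point} $[0]$, not on a whole neighbourhood of $[0]$: there the stratum through $[0]$ is $\{0\}\times V^H$ by Lemma \ref{n00l9ybu}, Lemma \ref{exwev7oh} splits off the $V^H$ factor, and the central stratum of $\Phi_W^{-1}(0)\sll{}H$ is the singleton $\{[0]\}$, so Corollary \ref{qbagt6ko} finishes; this is the paper's whole proof. Your induction arises only because you chose to prove the stronger statement that each linear model is Whitney \emph{near} $0$, which forces a second localization at points $x\ne 0$ inside the model. Even there, note that all you extract from the inductive hypothesis is regularity of strata of $\Phi_{W'}^{-1}(0)\sll{}L$ over the singleton $\{[0]\}$, which Corollary \ref{qbagt6ko} already gives unconditionally --- so the appeal to the inductive hypothesis can be replaced by a direct appeal to that corollary, and your argument collapses to the paper's direct one. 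The one thing your formulation nominally buys --- that the linear models themselves are globally Whitney stratified --- is in any case a special case of the proposition being proved, since each $\Phi_V^{-1}(0)\sll{}H$ is itself a hyperk\"ahler quotient as explained in \S\ref{kjmhibsa}.
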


\begin{proof}
By Proposition \ref{u5f7q715}, the problem reduces to
checking Whitney conditions for the $H$-orbit-type partition
of $\Phi_V^{-1}(0)\sll{}H$ at $[0]$. By \S\ref{zevdyull}, we
have $\Phi_V^{-1}(0)\sll{}H=(\Phi_W^{-1}(0)\sll{}H)\times
V^H$ and hence it suffices to check Whitney condition for
$\Phi_W^{-1}(0)\sll{}H$ at $[0]$. But the piece containing
$[0]$ is the singleton $\{[0]\}$, so this follows from
Corollary \ref{qbagt6ko}.
\end{proof}

\subsection{Poisson structure}\label{ir3fzjkd}

We now show Theorem \ref{71m3l02v}(ii), which says that
there is a natural Poisson bracket on $\O_{\mathsf{I}}$
making $M\slll{}K$ a stratified symplectic space as in
Sjamaar--Lerman's work (\S\ref{xnx9ie9i}) but in a
complex-analytic sense.

The definition of the Poisson bracket on $\O_{\mathsf{I}}$
is as follows. Let $U\s M\slll{}K$ be open, let
$f,g\in\O_{\mathsf{I}}(U)$ and let $q\in U$. To define
$\{f,g\}(q)$, let $S\s M\slll{}K$ be the orbit-type stratum
containing $q$ and let $(g_S,
\mathsf{I}_S,\mathsf{J}_S,\mathsf{K}_S)$ be its
hyperk\"ahler structure. Then, $(\omega_S)_\C \coloneqq
\omega_{\mathsf{J}_S}+i\omega_{\mathsf{K}_S}$ is a
complex-symplectic form on $(S,\mathsf{I}_S)$. By
Proposition \ref{i4ja2909}, the restrictions $f|_{S\cap U}$,
$g|_{S\cap U}$ are $\mathsf{I}_S$-holomorphic, and hence we
can take their Poisson bracket $\{f|_{S\cap U},g|_{S\cap
U}\}:S\cap U\to\C$ with respect to $(\omega_S)_\C$ and
define $\{f,g\}(q)\coloneqq\{f|_{S\cap U},g|_{S\cap
U}\}(q)$. This defines a function $\{f,g\}:U\to\C$
pointwise, and the goal is to show that it is holomorphic,
i.e.\ $\{f,g\}\in\O_{\mathsf{I}}(U)$. 

In what follows, we identify $S$ with a $G$-orbit-type
stratum in $\mu_\C^{-1}(0)\ass{\mu_\R}\sll{}G$, i.e.\ $S$ is
a connected component of
$(\mu_\C^{-1}(0)\ass{\mu_\R}\sll{}G)_{(H)}$ for some
reductive subgroup $H\s G$. By the definition of the
$G$-orbit-type partition, the map
$(\mu_\C^{-1}(0)\aps{\mu_\R})_{(H)}\to
(\mu_\C^{-1}(0)\ass{\mu_\R}\sll{}G)_{(H)}$ is surjective
(note that on the left-hand side we use polystable points),
so $S$ is the image under the quotient map
$\mu_\C^{-1}(0)\ass{\mu_\R} \to
\mu_\C^{-1}(0)\ass{\mu_\R}\sll{}G$ of an open subset $Z$ of
$(\mu_\C^{-1}(0)\aps{\mu_\R})_{(H)}$.

\begin{lemma}\label{wuem7guw}
The set $Z$ is a complex submanifold of $M$, the map
$\pi:Z\to S$ is a holomorphic submersion, and
$\pi^*(\omega_S)_\C=i^*\omega_\C$ where $i:Z\hookrightarrow
M$.
\end{lemma}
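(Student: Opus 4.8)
The plan is to reduce everything to the local normal form of Theorem \ref{edlabpxd} together with the description of the hyperk\"ahler strata in Proposition \ref{4kyjer5b}. The key preliminary observation is that every point of $Z$ is $\mu_\R$-polystable, so every $G$-orbit in $Z$ is closed in $M\ass{\mu_\R}$; since $\mu_\C^{-1}(0)$ is $G$-invariant, Heinzner--Loose's equivalence \eqref{ihb5mnx9} shows that each such orbit meets $\mu_\R^{-1}(0)\cap\mu_\C^{-1}(0)=\mu^{-1}(0)$. Writing $\Sigma\coloneqq Z\cap\mu^{-1}(0)$, and noting that $\pi$ restricts on $\mu^{-1}(0)$ to the hyperk\"ahler quotient map (Proposition \ref{aeguxor2}), one checks that $\Sigma=\pi^{-1}(S)\cap\mu^{-1}(0)$ is exactly the preimage of $S$ considered in Proposition \ref{4kyjer5b}; in particular $Z=G\cdot\Sigma$, where $\Sigma$ is a smooth submanifold of $M$, $\pi|_\Sigma\colon\Sigma\to S$ is a smooth submersion, and, writing $j\colon\Sigma\hookrightarrow M$, one has $(\pi|_\Sigma)^*\omega_{\mathsf{J}_S}=j^*\omega_{\mathsf{J}}$ and $(\pi|_\Sigma)^*\omega_{\mathsf{K}_S}=j^*\omega_{\mathsf{K}}$, hence $(\pi|_\Sigma)^*(\omega_S)_\C=j^*\omega_\C$.

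For the first two assertions, which are local, I would fix $p\in Z$ and apply Theorem \ref{edlabpxd} to get a $G$-equivariant biholomorphism $f\colon U\to U'$ from a $G$-saturated neighbourhood of $G\cdot p$ in $M\ass{\mu_\R}$ onto one of the zero section in $E=G\times_H(\h^\circ\times V)$, with $\nu_\C\circ f=\mu_\C$. Then $f$ carries $\mu_\C^{-1}(0)\cap U$ onto $\nu_\C^{-1}(0)\cap U'=(G\times_H\Phi_V^{-1}(0))\cap U'$ and, being $G$-equivariant, preserves polystability and orbit types, so it remains to identify the polystable orbit-type-$(H)$ locus of $\nu_\C^{-1}(0)=G\times_H\Phi_V^{-1}(0)$. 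Since $G_{[g,v]}=gH_vg^{-1}$ (as in the proof of Proposition \ref{u5f7q715}) and $H$ has finitely many connected components, the argument of Lemma \ref{n00l9ybu} shows $G_{[g,v]}$ is conjugate to $H$ iff $H_v=H$, i.e.\ iff $v\in V^H$, and any such $[g,v]$ is polystable because $H\cdot v=\{v\}$ is closed. Hence this locus is $G\times_H(\{0\}\times V^H)$, a holomorphic subbundle of $E$ and so a complex submanifold; transporting back, $Z$ is a complex submanifold of $M$ near $p$. Under $f$ the map $\pi$ becomes the projection $G\times_H V^H\to V^H\cong S$ (using Lemma \ref{n00l9ybu} to identify $S$ with $\{[0]\}\times V^H$), a holomorphic submersion, the complex structure on $S$ being that of the stratum, which agrees with $\mathsf{I}_S$ by Proposition \ref{i4ja2909}.

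For the third assertion I would argue that $\pi^*(\omega_S)_\C$ and $i^*\omega_\C$ are both holomorphic (hence $\C$-bilinear) $2$-forms on $Z$ that are $G$-invariant --- the former because $\pi$ is $G$-invariant, the latter because $\omega_\C$ is $G$-invariant and $i$ is $G$-equivariant. As $Z=G\cdot\Sigma$, it suffices to show they coincide as $\C$-bilinear forms at each $p\in\Sigma$. On the real subspace $T_p\Sigma\s T_pZ$ they agree, since there $\pi|_\Sigma$ is the hyperk\"ahler quotient map and this is precisely $(\pi|_\Sigma)^*(\omega_S)_\C=j^*\omega_\C$. It then remains to verify that $T_p\Sigma$ generates $T_pZ$ over $\C$, for two $\C$-bilinear forms agreeing on $T_p\Sigma$ automatically agree on $T_p\Sigma+\mathsf{I}\,T_p\Sigma$. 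Because $\g=\k\oplus\sqrt{-1}\,\k$ and the action is $\mathsf{I}$-holomorphic, $T_p(G\cdot p)=T_p(K\cdot p)+\mathsf{I}\,T_p(K\cdot p)$ with $T_p(K\cdot p)\s T_p\Sigma$; and since $\pi\colon Z\to S$ and $\pi|_\Sigma\colon\Sigma\to S$ are submersions onto $S$ with $\ker d\pi_p=T_p(G\cdot p)$, every $w\in T_pZ$ differs from some $u\in T_p\Sigma$ by an element of $T_p(G\cdot p)$. Combining these yields $T_pZ=T_p\Sigma+\mathsf{I}\,T_p\Sigma$, which finishes the proof.

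The main obstacle is the third assertion. Theorem \ref{edlabpxd} only models the complex-symplectic data $(\omega_\C,\mu_\C)$ and records nothing about $\mu_\R$, so the real slice $\Sigma=Z\cap\mu_\R^{-1}(0)$ is invisible in the local model and one cannot simply read off $\pi^*(\omega_S)_\C=i^*\omega_\C$ there. The device that circumvents this is to combine the partial information of Proposition \ref{4kyjer5b} (valid only along $\mu^{-1}(0)$) with holomorphicity and $G$-invariance: the generating property $T_pZ=T_p\Sigma+\mathsf{I}\,T_p\Sigma$ upgrades an identity of real-restricted forms on $\Sigma$ to an identity of holomorphic forms on all of $Z$, after which $G$-invariance spreads it over $Z=G\cdot\Sigma$. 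Checking this generating property carefully --- in particular that $T_p(K\cdot p)$ fills out $T_p(G\cdot p)$ over $\C$ --- is the delicate point.
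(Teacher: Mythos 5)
Your proposal is correct, and for the first two assertions it coincides with the paper's argument: the paper simply cites Theorem \ref{edlabpxd} to identify the embedding $Z\hookrightarrow M$ locally with $G/H\times V^H\hookrightarrow G\times_H(\h^\circ\times V)$, and your verification that the polystable orbit-type-$(H)$ locus of $\nu_\C^{-1}(0)=G\times_H\Phi_V^{-1}(0)$ is exactly $G\times_H(\{0\}\times V^H)$ is the detail the paper leaves implicit. For the third assertion you share the paper's skeleton (agreement of the two forms on the slice $\Sigma=Z\cap\mu^{-1}(0)$ via Theorem \ref{a0rzh3o6}, a tangent-space decomposition $T_pZ=T_p\Sigma+T_p(G\cdot p)$ at points of $\Sigma$, then $G$-invariance and $Z=G\cdot\Sigma$), but the mechanism for upgrading agreement on $T_p\Sigma$ to agreement on $T_pZ$ is genuinely different. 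The paper invokes ``the same argument as in Proposition \ref{4kyjer5b}'': any term with one argument $X^\#_p\in T_p(G\cdot p)$ vanishes for both forms --- for $\pi^*(\omega_S)_\C$ because $d\pi_p(X^\#_p)=0$, and for $i^*\omega_\C$ because $\omega_\C(X^\#_p,\cdot)=d\langle\mu_\C,X\rangle_p$ annihilates $T_pZ\subseteq\ker(d\mu_\C)_p$, using $Z\subseteq\mu_\C^{-1}(0)$. You instead exploit holomorphy: both forms are $\C$-bilinear, they agree on the real subspace $T_p\Sigma$, and $T_pZ=T_p\Sigma+\mathsf{I}\,T_p\Sigma$ is its complex span, which you obtain from $T_p(G\cdot p)=T_p(K\cdot p)+\mathsf{I}\,T_p(K\cdot p)$ and $T_p(K\cdot p)\subseteq T_p\Sigma$. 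Both routes are sound. The paper's argument needs only the complex moment-map identity and never the relation between $K$- and $G$-orbits under $\mathsf{I}$; yours isolates a reusable principle --- a holomorphic bilinear identity need only be checked on a real slice whose complex span is everything --- and makes transparent exactly where the real datum $\mu_\R$ (invisible in the local model of Theorem \ref{edlabpxd}) enters the proof.
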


\begin{proof}
By the local normal form (Theorem \ref{vhb1qe4r}), the
embedding of $Z$ in $M$ is locally biholomorphic to the
embedding of $G/H\times V^{H}$ in
$G\times_{H}(\h^\circ\times V)$ and $\pi$ is locally
biholomorphic to the projection $G/H\times V^{H}\to V^{H}$.
This proves the first and second assertions. For the third
assertion, we first note that, since the pullbacks of the
symplectic forms $\omega_{\mathsf{I}_S},
\omega_{\mathsf{J}_S}, \omega_{\mathsf{K}_S}$ on
$\mu^{-1}(0)_{(H)}$ are the restrictions of the symplectic
forms $\omega_{\mathsf{I}}, \omega_{\mathsf{J}},
\omega_{\mathsf{K}}$ on $M$, we have
$j^*(\pi^*(\omega_S)_\C) = j^*(i^*\omega_\C)$ where
$j:\mu^{-1}(0)_{(H)}\hookrightarrow Z$. Since $j$ descends
to a diffeomorphism $\mu^{-1}(0)_{(H)}/K\to
(\mu_\C^{-1}(0)\ass{\mu_\R}\sll{}G)_{(H)}$ we get that for
all $p\in \mu^{-1}(0)_{(H)}$,
$T_pZ=T_p\mu^{-1}(0)_{(H)}+T_p(G\cdot p)$. Hence, the result
follows by the same argument as in the proof of Theorem
\ref{a0rzh3o6} given in \S\ref{wtcqem80}.
\end{proof}

\begin{lemma}
Let $f:U\to\C$ be a holomorphic $G$-invariant function on an
open set $U\s M$, and let $\Xi_f$ be the holomorphic vector
field on $U$ dual to $df$ under $\omega_\C$. Then, $\Xi_f$
is tangent to $Z$, i.e.\ $\Xi_f(p)\in T_pZ$ for all $p\in
Z\cap U$.
\end{lemma}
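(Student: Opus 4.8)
The plan is to show that the local flow $\phi_t$ of $\Xi_f$ (which consists of biholomorphisms since $\Xi_f$ is holomorphic) preserves the set $\mu_\C^{-1}(0)_{(H)}$ of points of $\mu_\C^{-1}(0)$ of $G$-orbit type $(H)$, and then to observe that $Z$ is open in $\mu_\C^{-1}(0)_{(H)}$. Granting both, for $p\in Z$ the integral curve $t\mapsto\phi_t(p)$ stays in $\mu_\C^{-1}(0)_{(H)}$; since it starts at $p\in Z$ and $Z$ is open in $\mu_\C^{-1}(0)_{(H)}$, it stays in $Z$ for small $t$, so that $\Xi_f(p)=\frac{d}{dt}\big|_{t=0}\phi_t(p)\in T_pZ$. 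Only the germ of the flow at $t=0$ is used, so completeness of $\Xi_f$ is irrelevant.

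First I would check that $\phi_t$ preserves $\mu_\C^{-1}(0)$. Since $f$ is $G$-invariant, $X^\#(f)=0$ for all $X\in\g$, so the defining property $d\ip{\mu_\C,X}=X^\#\intp\omega_\C$ of the complex moment map gives $\Xi_f\ip{\mu_\C,X}=\big(d\ip{\mu_\C,X}\big)(\Xi_f)=\omega_\C(X^\#,\Xi_f)=-df(X^\#)=-X^\#(f)=0$. Hence $\mu_\C$ is constant along $\Xi_f$ and $\phi_t$ preserves every fibre of $\mu_\C$, in particular $\mu_\C^{-1}(0)$. Next, because both $f$ and $\omega_\C$ are $G$-invariant, the field $\Xi_f$ is $G$-equivariant, so its flow commutes with the $G$-action: $\phi_t\circ g=g\circ\phi_t$ for all $g\in G$. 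This forces $G_{\phi_t(p)}=G_p$, so $\phi_t$ preserves each locus of fixed $G$-orbit type; in particular it preserves $\mu_\C^{-1}(0)_{(H)}=\mu_\C^{-1}(0)\cap M_{(H)}$.

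The remaining point, which I expect to be the crux, is that $Z$ is open in $\mu_\C^{-1}(0)_{(H)}$. By construction $Z$ is open in $(\mu_\C^{-1}(0)\aps{\mu_\R})_{(H)}$, so it suffices to identify this set with $\mu_\C^{-1}(0)_{(H)}\cap M\ass{\mu_\R}$, which is open in $\mu_\C^{-1}(0)_{(H)}$ because $M\ass{\mu_\R}$ is open in $M$ by Heinzner--Loose (Theorem \ref{w07wd84e}). The needed identity amounts to the claim that a $\mu_\R$-semistable point of $G$-orbit type $(H)$ is automatically $\mu_\R$-polystable. To see this, let $p\in\mu_\C^{-1}(0)\cap M\ass{\mu_\R}$ have orbit type $(H)$ and let $G\cdot q$ be the unique closed orbit in the analytic Hilbert quotient fibre through $p$ (\S\ref{jncd3to1}), so $q\in\overline{G\cdot p}$. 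If $G\cdot p$ were not closed then $q\notin G\cdot p$; but a proper boundary orbit of $G\cdot p$ has strictly smaller dimension, hence $\dim G_q>\dim G_p$, so $q$ cannot have orbit type $(H)$ — a contradiction. Thus $G\cdot p$ is closed, i.e.\ $p$ is $\mu_\R$-polystable, giving the inclusion $\mu_\C^{-1}(0)_{(H)}\cap M\ass{\mu_\R}\s(\mu_\C^{-1}(0)\aps{\mu_\R})_{(H)}$ (the reverse inclusion is immediate).

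Combining these with the reduction in the first paragraph finishes the proof. The main obstacle is the last step: that semistability forces polystability inside a single orbit type. This rests on the structure theory of analytic Hilbert quotients (a unique closed orbit in each fibre, lying in the closure of every orbit of the fibre) together with the standard fact that orbits of a complex reductive group degenerate only to strictly lower-dimensional orbits; the first two paragraphs are then routine consequences of $G$-invariance of $f$ and $\omega_\C$.
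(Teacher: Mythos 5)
Your overall route---show that the local flow $\phi_t$ of $\Xi_f$ preserves $\mu_\C^{-1}(0)_{(H)}$, show that $Z$ is open in $\mu_\C^{-1}(0)_{(H)}$, then differentiate at $t=0$---is genuinely different from the paper's proof (which computes $\Xi_f(p)$ by linear algebra in the local model $G\times_H(\h^\circ\times V)$ of Theorem \ref{edlabpxd}, via Lemma \ref{lopjs6qu}), and your first two paragraphs are correct: the computation $\Xi_f\ip{\mu_\C,X}=0$ and the equivariance of the flow are fine (granting, as the intended application permits, that $U$ is $G$-invariant). The gap is in the openness step. The claim you reduce it to---every $\mu_\R$-semistable point of $\mu_\C^{-1}(0)$ whose stabilizer is conjugate to $H$ is $\mu_\R$-polystable---is \emph{false}. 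Take $M=T^*\C^2\cong\H^2$ with $K=\U(1)$, $G=\C^*$ acting with weights $(1,1,-1,-1)$, so that $\mu_\C(x,y)=x_1y_1+x_2y_2$ and every point of $M$ is $\mu_\R$-semistable (\S\ref{kjmhibsa}). The point $((1,0),(0,1))\in\mu_\C^{-1}(0)$ has a closed orbit and trivial stabilizer, so $H=\{1\}$ is a legitimate choice in the setup of the lemma; but $p_1=((1,0),(0,0))\in\mu_\C^{-1}(0)$ also has trivial stabilizer and is \emph{not} polystable, since $\overline{G\cdot p_1}\ni 0$. The flaw in your argument is the phrase ``so $q$ cannot have orbit type $(H)$ --- a contradiction'': nothing in your hypotheses says that the closed orbit $G\cdot q$ in the fibre through $p$ has type $(H)$; only $p$ itself does. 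You are conflating the conjugacy class of $G_p$ with the orbit type of $\pi(p)$ in the quotient, which by definition (\S\ref{am5r1mlu}) is the type of the closed orbit in the fibre. In the example, $p_1$ has type $(\{1\})$, while $\pi(p_1)$ is the vertex of $M\slll{\mu}K\cong\C^2/\Z_2$, whose type is $(\C^*)$.

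The statement you actually need---that $(\mu_\C^{-1}(0)\aps{\mu_\R})_{(H)}$ is open in $\mu_\C^{-1}(0)_{(H)}$---is nevertheless true, but I see no way to get it without the slice theorem. At a polystable point with stabilizer exactly $H$, Theorem \ref{8w30qpuz} gives a $G$-saturated neighbourhood of the form $G\times_HW$; if a point $[g,w]$ in it has $G_{[g,w]}=gH_wg^{-1}$ conjugate in $G$ to $H$, then $H_w$ is a closed subgroup of $H$ with the same dimension and the same (finite) number of components as $H$, hence $H_w=H$, so $w\in W^H$; the orbit $G\cdot[g,w]$ is then closed in $G\times_HW$ and hence, by saturatedness, closed in $M\ass{\mu_\R}$, i.e.\ $[g,w]$ is polystable. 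So your approach can be completed, but only by invoking the same slice/normal-form input on which the paper's short direct computation already rests; it is not an elementary bypass of it.
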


\begin{proof}
Let $\m=\h^\perp$ as in \S\ref{s0eqcql1}. By the local
normal form we may assume that $M=G\times_H(\m^*\times V)$,
$p=[1,0,0]$ and $Z=G/H\times V^H$. By Lemma \ref{lopjs6qu},
$T_pM=\m\times\m^*\times V$, $Z=\m\times 0\times V^H$, and
$T_p(G\cdot p)=\m\times 0\times 0$. Let
$(x,\xi,v)\coloneqq\Xi_f(p)\in \m\times\m^*\times V$. Then,
$df_p(y,\eta,w)=\eta(x)-\xi(y)+\omega_\C(v,w)$ for all
$(y,\eta,w)\in \m\times\m^*\times V$. Since $f$ is
$G$-invariant, we have $df_p(\m\times 0\times 0)=0$, so
$\xi=0$. Also, $G$-equivariance implies that for all $w\in
V$ and $h\in H$ we have $df_p(0,0,h\cdot w)=df_p(0,0,w)$, so
$\omega_\C(v,h\cdot w)=\omega_\C(v,w)$. Since $\omega_\C$ is
$H$-invariant, this implies $\omega_\C(h^{-1}v-v,w)=0$ for
all $w\in V$ and $h\in H$, so $v\in V^H$. Thus,
$\Xi_f(p)=(y,0,v)\in \m\times 0\times V^H=T_pZ$. 
\end{proof}

\begin{lemma}\label{utxo38xn}
For all open set $U\s M\slll{}K$ and
$f,g\in\O_{\mathsf{I}}(U)$, we have
$\{f,g\}\in\O_{\mathsf{I}}(U)$.
\end{lemma}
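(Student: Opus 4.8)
The plan is to reduce the holomorphicity of the pointwise-defined function $\{f,g\}$ to the Poisson bracket on $M$ of two $G$-invariant holomorphic extensions of $f$ and $g$, using the two preceding lemmas. Since the assertion is local, I fix $x\in U$ and work near $x$. Under the identification $M\slll{\mu}K\cong\mu_\C^{-1}(0)\ass{\mu_\R}\sll{}G$ of Proposition \ref{aeguxor2}, the sections $f,g$ are holomorphic functions on an open subset of the \emph{closed} complex subspace $\mu_\C^{-1}(0)\ass{\mu_\R}\sll{}G\hookrightarrow M\ass{\mu_\R}\sll{}G$ (Proposition \ref{wfgxy42b}(ii)). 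The restriction map of structure sheaves along a closed immersion is surjective, so after shrinking to a neighbourhood of $x$ I may lift $f$ and $g$ to holomorphic functions on an open subset of $M\ass{\mu_\R}\sll{}G$; pulling back along the quotient map produces $G$-invariant holomorphic functions $\tilde f,\tilde g$ on a $G$-saturated open set $\tilde U\s M\ass{\mu_\R}$ (open in $M$), whose restrictions to $\mu_\C^{-1}(0)\ass{\mu_\R}$ descend to $f,g$ on a neighbourhood $U'$ of $x$.

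Next I match Hamiltonian vector fields stratum by stratum. Let $S$ be any orbit type stratum meeting $U'$, and let $Z$, $i:Z\hookrightarrow M$, and $\pi:Z\to S$ be as in the two lemmas above, so that $\pi$ is a holomorphic submersion and $\pi^*(\omega_S)_\C=i^*\omega_\C$. By the second of those lemmas, the Hamiltonian vector fields $\Xi_{\tilde f},\Xi_{\tilde g}$ on $\tilde U$ (dual to $d\tilde f,d\tilde g$ under $\omega_\C$) are tangent to $Z$; being $G$-invariant, they descend along $\pi$. Using $\pi^*(\omega_S)_\C=i^*\omega_\C$ together with $\tilde f|_Z=\pi^*(f|_S)$, a direct computation gives $\pi_*\Xi_{\tilde f}=\Xi^S_{f|_S}$, the Hamiltonian vector field of $f|_S$ on $(S,(\omega_S)_\C)$, and likewise for $g$. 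Hence, writing $\{\tilde f,\tilde g\}_M=d\tilde g(\Xi_{\tilde f})$ for the Poisson bracket on $M$ with respect to $\omega_\C$, I obtain $\pi^*\{f|_S,g|_S\}=(\{\tilde f,\tilde g\}_M)|_Z$.

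Finally, the function $\{\tilde f,\tilde g\}_M$ is $G$-invariant and holomorphic on $\tilde U$, so its restriction to $\mu_\C^{-1}(0)\ass{\mu_\R}$ descends to a section $h\in\O_{\mathsf{I}}(U')$. Since $\{\tilde f,\tilde g\}_M|_Z=\pi^*(h|_S)$ as well, injectivity of $\pi^*$ yields $\{f|_S,g|_S\}=h|_S$, and therefore $\{f,g\}(x')=h(x')$ for every $x'\in U'$ (applying the argument to the stratum through $x'$ with the same pair $\tilde f,\tilde g$). Thus $\{f,g\}=h$ is holomorphic on $U'$; as $x$ was arbitrary, $\{f,g\}\in\O_{\mathsf{I}}(U)$.

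The only genuinely non-formal input is the extension step: converting the fibrewise definition of the bracket into the single ambient bracket $\{\tilde f,\tilde g\}_M$ requires lifting invariant holomorphic functions off $\mu_\C^{-1}(0)\ass{\mu_\R}$, and this is exactly what the closed-immersion property of analytic Hilbert quotients provides. I expect this to be the main obstacle; the remaining steps are bookkeeping with the two preceding lemmas.
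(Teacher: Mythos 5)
Your proposal is correct and follows essentially the same route as the paper's proof: extend $f,g$ to $G$-invariant holomorphic functions $\hat f,\hat g$ on a $G$-saturated open subset of $M\ass{\mu_\R}$, use the two preceding lemmas (tangency of $\Xi_{\hat f}$ to $Z$ and $\pi^*(\omega_S)_\C=i^*\omega_\C$) to identify $d\pi(\Xi_{\hat f})$ with the Hamiltonian vector field of $f|_S$ and hence $\pi^*\{f|_S,g|_S\}=\{\hat f,\hat g\}|_Z$, and conclude that $\{f,g\}$ is the descent of the $G$-invariant holomorphic function $\{\hat f,\hat g\}$. The only organizational differences are that you verify equality stratum by stratum on the quotient where the paper reduces to polystable points upstairs via $G$-invariance, and that you spell out the extension step (surjectivity of restriction along the closed immersion of quotients from Proposition \ref{wfgxy42b}(ii)) which the paper simply asserts.
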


\begin{proof}
We identify $M\slll{}K$ with $\mu_\C^{-1}(0)\ass{\mu_\R}
\sll{} G$. Let $\Pi: \mu_\C^{-1}(0)\ass{\mu_\R} \to
\mu_\C^{-1}(0)\ass{\mu_\R} \sll{} G$ be the quotient map.
Then, $\{f,g\}\in\O_{\mathsf{I}}(U)$ if and only if the
pullback $\Pi^*\{f,g\}:\Pi^{-1}(U)\to\C$ is holomorphic.
This is a local statement, so we may assume that
$\Pi^{-1}(U)=\mu_\C^{-1}(0)\ass{\mu_\R}\cap U'$ for some
$G$-invariant open set $U'\s M\ass{\mu_\R}$ such that
$\Pi^*f$ and $\Pi^*g$ extend to holomorphic $G$-invariant
functions $\hat{f},\hat{g}:U'\to\C$. Then, it suffices to
show that $\Pi^*\{f,g\}=\{\hat{f},\hat{g}\}|_{\Pi^{-1}(U)}$.
Since $\hat{f}$, $\hat{g}$ and $\omega_\C$ are
$G$-invariant, so is $\{\hat{f},\hat{g}\}$. Thus, it
suffices to show that
$\Pi^*\{f,g\}(p)=\{\hat{f},\hat{g}\}(p)$ for every
polystable point $p\in
\Pi^{-1}(U)\cap\mu_\C^{-1}(0)\aps{\mu_\R}$. We have $p\in Z$
for some $Z$ as above. Let $S=\Pi(Z)$, $\pi=\Pi|_Z:Z\to S$
and $i:Z\to M$, as before. Then, we have
$d\pi(\Xi_{\hat{f}}(p))=\Xi_f(\pi(p))$, where $\Xi_f$ is the
Hamiltonian vector field of $f$ on $U\cap S$, since for all
$v\in T_pZ$,
\begin{align*}
(\omega_S)_\C(d\pi(\Xi_{\hat{f}}(p)),d\pi(v)) &=
\omega_\C(\Xi_{\hat{f}}(p),v) =
d\hat{f}_p(v)=df_{\pi(p)}(d\pi(v)) \\
&=(\omega_S)_\C(\Xi_f(\pi(p)),d\pi(v)).
\end{align*}
Thus,
\begin{align*}
\{f,g\}(\Pi(p)) &\coloneqq
(\omega_S)_\C(\Xi_{f}(\pi(p)),\Xi_{f}(\pi(p))) =
(\omega_S)_\C(d\pi(\Xi_{\hat{f}}(p)),d\pi(\Xi_{\hat{f}}(p)))
\\
&= \omega_\C(\Xi_{\hat{f}}(p),\Xi_{\hat{g}}(p))
= \{\hat{f},\hat{g}\}(p).
\end{align*}
So $\Pi^*\{f,g\}=\{\hat{f},\hat{g}\}|_{\Pi^{-1}(U)}$ and
hence $\{f,g\}\in\O_{\mathsf{I}}(U)$.
\end{proof}

By construction, the Poisson bracket is uniquely determined
by the property that the inclusions of the strata are
Poisson maps. Thus, we have show Theorem \ref{71m3l02v}(ii).

\subsection{Compatibility of the local
model}\label{gx5jsq9w}

We now show the remaining part of Theorem
\ref{71m3l02v}(iv), which is that the local model is
compatible with the holomorphic Poisson bracket constructed
in the previous section.

Let $H$ be a complex reductive group and
$H\to\Sp(V,\omega_\C)$ a complex-{\hspace{0pt}}symplectic
representation. Then, as explained in \S\ref{kjmhibsa}, we
can view the affine GIT quotient
$V_0\coloneqq\Phi_V^{-1}(0)\sll{}H$ as a hyperk\"ahler
quotient. Hence, if $\O_{V_0}$ denotes the underlying
complex-analytic structure of $V_0$, then $(V_0,\O_{V_0})$
together with the $H$-orbit-type partition is a
complex-analytic Whitney stratified space with a holomorphic
Poisson bracket (which does not depend on the choice of
quaternionic structure). Recall from Proposition
\ref{u5f7q715} that $\Phi_V^{-1}(0)\sll{}H$ provides a local
model for the complex-analytic structure of $M\slll{}K$.
Here we show that $\Phi_V^{-1}(0)\sll{}H$ is also a local
model for the Poisson structure.

\begin{proposition}
The biholomorphism of \textup{Proposition \ref{u5f7q715}} is
compatible with the holomorphic Poisson brackets.
\end{proposition}

\begin{proof}
Since the local normal form for $(M,K,\mu)$ is an
isomorphism of complex-symplectic manifolds, we only need to
show that the isomorphism $\kappa^{-1}(0)\sll{}G =
\Phi_V^{-1}(0)\sll{}H$ of affine varieties in the proof of
Proposition \ref{u5f7q715} respects the Poisson brackets.
This follows from the fact that $V$ is a complex-symplectic
submanifold of $E$ via the embedding $\iota:V\hookrightarrow
G\times_H(\h^\circ\times V)$, $v\mto [1,0,v]$ and that the
isomorphism descends from this map.
\end{proof}

\subsection{Real Poisson structure}

We now prove Theorem \ref{71m3l02v}(iii), i.e.\ we show that
$M \slll{} K$ has the structure of a stratified symplectic
space (Definition \ref{47d8nznm}) compatible with the first
K\"ahler forms.

Let $C^\infty(M \slll{} K)$ be the subalgebra of the
$\R$-algebra of continuous functions on $M \slll{} K$
consisting functions descending from smooth $K$-invariant
functions on $M$. In other words, $f \in C^\infty(M \slll{}
K)$ if and only if there exists $F \in C^\infty(M)^K$ such
that $\pi^*f = F|_{\mu^{-1}(0)}$, where $\pi : \mu^{-1}(0)
\to M \slll{} K$ is the quotient map.

\begin{lemma}
The inclusion $S \hookrightarrow M \slll{} K$ of an
orbit-type stratum is a smooth map, i.e. for all $f \in
C^\infty(M \slll{} K)$, $f|_S$ is a smooth function on $S$.
\end{lemma}

\begin{proof}
Let $F \in C^\infty(M)^K$ be such that $f \circ \pi =
F|_{\mu^{-1}(0)}$. Recall that, by Theorem \ref{a0rzh3o6},
$\pi^{-1}(S)$ is a smooth submanifold of $M$ and the
restriction $\pi^{-1}(S) \to S$ is a surjective submersion.
Hence, $F|_{\pi^{-1}(S)}$ descends to a unique smooth
function $S \to \R$, which is just $f|_S$.
\end{proof}

Hence, we can define a Poisson bracket pointwise as in
\cite{sja91} by letting
\[
\{f, g\}(x) \coloneqq \{f|_{S},
g|_{S}\}_{\omega_{\mathsf{I}_S}}(x),
\]
where $S$ is the unique orbit-type stratum containing $x$
and $\{\cdot, \cdot\}_{\omega_{\mathsf{I}_S}}$ is the real
Poisson bracket on $S$ induced by $\omega_{\mathsf{I}_{S}}$.
It only remains to show that $\{f, g\} \in C^\infty(M
\slll{} K)$ (the Leibniz rule and Jacobi identity
follow from that of $\{\cdot,
\cdot\}_{\omega_{\mathsf{I}_S}}$).

\begin{proposition}
For all $f, g \in C^\infty(M \slll{} K)$ we have $\{f, g\}
\in C^\infty(M \slll{} K)$.
\end{proposition}

\begin{proof}
First note that $(M, K, \mu_\R)$ is a
Hamiltonian manifold as in \S\ref{xnx9ie9i}, so
Sjamaar--Lerman's original theorem holds, i.e.\ $M
\sll{\mu_\R} K$ is endowed with a Poisson $\R$-algebra
$C^\infty(M \sll{\mu_\R} K)$ defined analogously. By
definition, the inclusion $M \slll{} K \hookrightarrow M
\sll{\mu_\R} K$ is smooth with respect to $C^\infty(M
\slll{} K)$ and $C^\infty(M \sll{\mu_\R} K)$.

Now, let $f, g \in C^\infty(M \slll{} K)$, so that $f \circ
\pi = F|_{\mu^{-1}(0)}$ and $g \circ \pi = G|_{\mu^{-1}(0)}$
for some $F, G \in C^\infty(M)^K$. Let $\tilde{\pi} :
\mu_\R^{-1}(0) \to M \sll{\mu_\R} K$ be the quotient map.
Then, $F|_{\mu_\R^{-1}(0)} = \tilde{f} \circ \tilde{\pi}$
and $G|_{\mu_\R^{-1}(0)} = \tilde{g} \circ \tilde{\pi}$, for
some $\tilde{f}, \tilde{g} \in C^\infty(M \sll{\mu_\R} K)$.
Note that $\tilde{f}|_{M \slll{} K} = f$ and $\tilde{g}|_{M
\slll{} K} = g$.  Let $x \in M \slll{} K$, let $S$ be the
stratum containing $x$, and let $\tilde{S}$ be the stratum
of $M \sll{\mu_\R} K$ containing $x$. Then, by construction
of the hyperk\"ahler structure on $S$ (see
\S\ref{wtcqem80}), $S$ is a K\"ahler submanifold of
$\tilde{S}$. In particular, $\{f, g\}(x) = \{f|_S,
g|_S\}_S(x) = \{\tilde{f}|_{\tilde{S}},
\tilde{g}|_{\tilde{S}}\}_{\tilde{S}}(x) = \{\tilde{f},
\tilde{g}\}(x)$. Hence, $\{f, g\} = \{\tilde{f},
\tilde{g}\}|_{M \slll{} K} \in C^\infty(M \slll{} K)$ since
$\{\tilde{f}, \tilde{g}\} \in C^\infty(M \sll{\mu_\R} K)$.
\end{proof}

\end{document}